\numberwithin{equation}{section} 
\newcommand{\ie}{\textit{i.e.}}
\newcommand{\cred}[1]{#1}
\newcommand{\cblue}[1]{#1}
\newcommand{\barycenter}{\xi}
\newcommand{\tddropdomain}{{\Omega^{(1)}_t}}
\newcommand{\tddropdomainclosed}{{\overline{\Omega}^{(1)}_t}}
\newcommand{\tdreservoirdomain}{{\Omega^{(2)}_t}}
\newcommand{\tdliquiddomain}{\Omega_t}
\newcommand{\tdinterface}{{\Gamma_t}}
\newcommand{\tdrho}{\rho}
\newcommand{\tdmu}{\mu}
\newcommand{\tdvel}{\vvel}
\newcommand{\tdpres}{\vpres}
\newcommand{\tdveldrop}{\vvel^{(1)}}
\newcommand{\tdpresdrop}{\vpres^{(1)}}
\newcommand{\tdvelreservoir}{\vvel^{(2)}}
\newcommand{\tdpresreservoir}{\vpres^{(2)}}
\newcommand{\tdnvec}{\mathrm{n}}
\newcommand{\tdinterfacepara}{\Phi}
\newcommand{\ssinterfacepara}{\Phi_\ssinterface}
\newcommand{\tdlagrangecoordinate}{\Phi_\Gamma}
\newcommand{\tdgravitation}{b}
\newcommand{\ssdropdomain}{\cred{\Omega^\inner}}
\newcommand{\ssreservoirdomain}{\cred{\Omega^\outer}}
\newcommand{\ssliquiddomain}{{\cred{\Omega}}}
\newcommand{\ssinterface}{{\cred{\Gamma}}}
\newcommand{\ssdropdomainh}{{\cblue{\Omega^\inner_\height}}}
\newcommand{\ssdropdomainhclosed}{{\cblue{\overline{\Omega}^\inner_\height}}}
\newcommand{\ssreservoirdomainh}{{\cblue{\Omega^\outer_\height}}}
\newcommand{\ssreservoirdomainhclosed}{{\cblue{\overline{\Omega}^\outer_\height}}}
\newcommand{\ssliquiddomainh}{{\cblue{\Omega_\height}}}
\newcommand{\ssinterfaceh}{{\cblue{\Gamma_\height}}}
\newcommand{\sspres}{\upres}
\newcommand{\sspresreservoir}{\upres^\outer}
\newcommand{\sspresdrop}{\upres^\inner}
\newcommand{\refdomain}{{\Omega_0}}
\newenvironment{pdeq}{ \left\{ \begin{aligned}}{\end{aligned}\right.}
\newcommand{\eqrefsub}[2]{\eqref{#1}\textsubscript{#2}}
\newcommand{\np}[1]{(#1)}
\newcommand{\nb}[1]{[#1]}
\newcommand{\bp}[1]{\big(#1\big)}
\newcommand{\bb}[1]{\big[#1\big]}
\newcommand{\Bp}[1]{\bigg(#1\bigg)}
\newcommand{\Bb}[1]{\bigg[#1\bigg]}
\newcommand{\calg}{{\mathcal G}}
\newcommand{\call}{{\mathcal L}}
\newcommand{\calm}{{\mathcal M}}
\newcommand{\caln}{{\mathcal N}}
\newcommand{\calo}{{\mathcal O}}
\newcommand{\calp}{{\mathcal P}}
\newcommand{\R}{\mathbb{R}}
\newcommand{\N}{\mathbb{N}}
\newcommand{\e}{e}
\newcommand{\B}{B}
\DeclareMathOperator{\id}{Id}
\DeclareMathOperator{\Div}{div}
\newcommand{\DivAdjoint}{\Div^*}
\DeclareMathOperator{\cof}{cof}
\DeclareMathOperator{\supp}{supp}
\DeclareMathOperator{\trace}{Tr}
\newcommand{\embeds}{\hookrightarrow}
\DeclareMathOperator{\vecspan}{span}
\DeclareMathOperator{\sgn}{sgn}
\newcommand{\diffq}[2]{D^{#1}_{#2}}
\newcommand{\extopr}{E}
\newcommand{\soloprvel}{\mathcal{K}}
\newcommand{\linoprspace}{\mathscr{L}}
\newcommand{\ra}{\rightarrow}
\newcommand{\set}[1]{\ensuremath{\{#1\}}}
\newcommand{\setc}[2]{\ensuremath{\{#1\ \mid\ #2\}}}
\newcommand{\setcl}[2]{\ensuremath{\bigl\{#1\ \big\mid\ #2\bigr\}}}
\newcommand{\setcL}[2]{\ensuremath{\biggl\{#1\ \bigg\mid\ #2\biggr\}}}
\newcommand{\ball}{\mathrm{B}}
\newcommand{\sphere}{\mathbb{S}}
\renewcommand{\restriction}[2]{#1\big | _{#2}}
\newcommand{\seqkN}[1]{\ensuremath{\set{#1_k}_{k=1}^\infty}}
\newcommand{\proj}{\calp}
\newcommand{\projcompl}{\calp_\bot}
\newcommand{\sorthomatrixspace}[1]{SO(#1)}
\newcommand{\transpose}{\top}
\newcommand{\idmatrix}{\mathrm{I}}
\newcommand{\Rn}{{\R^n}}
\newcommand{\Rdot}{\dot{\R}}
\newcommand{\cube}{Q}
\newcommand{\cuberx}{\cube_r(\tilde{x})}
\newcommand{\ballrixi}{\ball_{r_i}(x_i)}
\newcommand{\ballri}{\ball_{r_i}(0)}
\newcommand{\cubehalfr}{\cube_{\frac{r}{2}}(0)}
\newcommand{\cuber}{\cube_r(0)}
\newcommand{\cubeplus}{\cube^+}
\newcommand{\cubeminus}{\cube^-}
\newcommand{\cubepr}{\cube_r'(0)}
\newcommand{\grad}{\nabla}
\newcommand{\pt}{\partial_t}
\newcommand{\dx}{{\mathrm d}x}
\newcommand{\dr}{{\mathrm d}r}
\newcommand{\dS}{{\mathrm d}S}
\newcommand{\symmgrad}{\mathrm S}
\newcommand{\LaplaceBeltrami}{\Delta_{\sphere^2}}
\newcommand{\nvec}{\mathrm{n}}
\newcommand{\meancurv}{\mathrm{H}}
\newcommand{\projtang}{(\idmatrix-\nvec\otimes\nvec)}
\newcommand{\tdprojtang}{(\idmatrix-\tdnvec\otimes\tdnvec)}
\newcommand{\projtangtrafo}{\mathcal{P}^\height}
\newcommand{\projtangtrafozero}{\mathcal{P}^0}
\newcommand{\SR}{\mathscr{S}}
\newcommand{\TDR}{\mathscr{S^\prime}}
\newcommand{\ft}[1]{\widehat{#1}}
\newcommand{\FT}{\mathscr{F}}
\newcommand{\iFT}{\mathscr{F}^{-1}}
\newcommand{\norm}[1]{\lVert#1\rVert}
\newcommand{\oseennorm}[2]{\norm{#1}_{#2,\mathrm{Oseen}}}
\newcommand{\normL}[1]{\Bigl\lVert#1\Bigr\rVert}
\newcommand{\snorm}[1]{{\lvert #1 \rvert}}
\newcommand{\snorml}[1]{{\bigl\lvert #1 \big\rvert}}
\newcommand{\snormL}[1]{{\Bigl\lvert #1 \Big\rvert}}
\newcommand{\testspcube}{\mathrm{W}^{1,2}_{0,\Gamma_0}}
\newcommand{\testsp}{C}
\newcommand{\testsolsp}{\mathscr{C}}
\newcommand{\hsp}{H}
\newcommand{\hspm}{H_M}
\newcommand{\hspdual}{H'}
\newcommand{\hsolsp}{\mathscr{H}}
\newcommand{\hsolspdual}{\mathscr{H}'}
\newcommand{\WSR}[2]{\mathrm{W}^{#1,#2}} 
\newcommand{\WSRN}[2]{\mathrm{W}^{#1,#2}_0} 
\newcommand{\DSR}[2]{\mathrm{D}^{#1,#2}} 
\newcommand{\DSRN}[2]{\mathrm{D}^{#1,#2}_0} 
\newcommand{\WSRloc}[2]{\mathrm{W}^{#1,#2}_{\mathrm{loc}}}
\newcommand{\CR}[1]{\mathrm{C}^{#1}}  
\newcommand{\LR}[1]{\mathrm{L}^{#1}}
\newcommand{\LRN}[1]{\mathrm{L}^{#1}_0} 
\newcommand{\LRNm}[1]{\mathrm{L}^{#1}_{0,M}} 
\newcommand{\LRloc}[1]{\mathrm{L}^{#1}_{\mathrm{loc}}} 
\newcommand{\CRi}{\CR \infty}
\newcommand{\CRci}{\CR \infty_0}
\newcommand{\WSRhom}[2]{\mathrm{D}^{#1,#2}} 
\newcommand{\WSRhomN}[2]{\mathrm{D}^{#1,#2}_0} 
\newcommand{\dpair}[2]{\langle #1, #2 \rangle}
\newcommand{\Xspace}{\mathbf X^{q,r}}
\newcommand{\Yspace}{\mathbf Y^{q,r}}
\newcommand{\Xrey}{{\mathbf X^{q,r}_\Rey}}
\newcommand{\Xreyrhod}{{\mathbf X^{q,r}_\Reyrhod}}
\newcommand{\XreyOne}{{\mathbf X^{q,r}_{1,\Rey}}}
\newcommand{\Xoseen}{\mathrm{X}^{q,r,\Rey}_{\mathrm{Oseen}}}
\newcommand{\nsnonlinb}[2]{#1\cdot\grad #2}
\newcommand{\nsnonlin}[1]{\nsnonlinb{#1}{#1}}
\newcommand{\vvel}{v}
\newcommand{\vpres}{p}
\newcommand{\wvel}{w}
\newcommand{\wpres}{\mathfrak{q}}
\newcommand{\uvel}{u}
\newcommand{\upres}{\mathfrak{p}}
\newcommand{\uvelhf}{u_\#}
\newcommand{\upreshf}{\mathfrak{p}_\#}
\newcommand{\liftgvel}{G_\#}
\newcommand{\liftfvel}{V_\#}
\newcommand{\liftfpres}{Q_\#}
\newcommand{\lifthzerovel}{W_\#}
\newcommand{\lifthzeropres}{\Pi_\#}
\newcommand{\lifthtwotwovel}{\widetilde{W}_\#}
\newcommand{\lifthonetwopres}{\widetilde{\Pi}_\#}
\newcommand{\phihf}{{\phi_{\#}}}
\newcommand{\Uvelhf}{U_\#}
\newcommand{\Upreshf}{\mathfrak{P}_\#}
\newcommand{\uvellf}{u_{\bot}}
\newcommand{\uvelonelf}{u_{\bot1}}
\newcommand{\uveltwolf}{u_{\bot2}}
\newcommand{\uvelthreelf}{u_{\bot3}}
\newcommand{\upreslf}{\mathfrak{p}_\bot}
\newcommand{\Uvel}{U}
\newcommand{\Upres}{\mathfrak{P}}
\newcommand{\tuvel}{\widetilde{u}}
\newcommand{\tupres}{\widetilde{\mathfrak{p}}}
\newcommand{\szvelhf}{\mathfrak{z}_{\#}}
\newcommand{\szpreshf}{\mathfrak{q}_{\#}}
\newcommand{\fluidstress}{\mathrm T}
\newcommand{\fluidstresstrafo}{\mathrm T^\height}
\newcommand{\fluidstresstrafoOne}{\mathrm T^{\height_1}}
\newcommand{\fluidstresstrafoTwo}{\mathrm T^{\height_2}}
\newcommand{\inner}{{(1)}}
\renewcommand{\outer}{{(2)}}
\newcommand{\jump}[1]{\big\lsem #1 \big\rsem}
\newcommand{\height}{\eta}
\newcommand{\theight}{\widetilde{\eta}}
\newcommand{\gravity}{\mathrm{g}}
\newcommand{\fundsoloseen}{\varGamma_{\textnormal{\tiny{Oseen}}}^\rey}
\newcommand{\tin}{\text{in }}
\newcommand{\tif}{\text{if }}
\newcommand{\ton}{\text{on }}
\newcommand{\tand}{\text{and }}
\newcommand{\tas}{\text{as }}
\newcommand{\half}{\frac{1}{2}}
\renewcommand{\epsilon}{\varepsilon}
\renewcommand{\phi}{\varphi}
\newcommand{\rey}{\lambda}
\newcommand{\Reyrhod}{{\lambda_0\np{\rhod}}}
\newcommand{\Rey}{{\lambda_0}}
\newcommand{\reyd}{\kappa}
\newcommand{\treyd}{\widetilde{\kappa}}
\newcommand{\rhod}{\widetilde{\rho}}
\newcommand{\snrhod}{\snorm{\rhod}}
\newcommand{\rhodpot}{{\widetilde{\rho}^\ppot}}
\newcommand{\reybar}{\overline{\rey}}
\newcommand{\ethree}{\e_3}
\newcommand{\rotmatrix}{R}
\newcommand{\rotmatrixi}{\mathrm{R}_i}
\newcommand{\bigo}{O}
\newcommand{\smallo}{o}
\newcommand{\kroneckerdelta}{\delta}
\newcommand{\cutoff}{\chi}
\newcommand{\cutoffkappa}{\kappa}
\newcommand{\change}[1]{}
\newcommand{\ctrafo}{\Phi^\height}
\newcommand{\tctrafo}{\Phi^{\widetilde{\height}}}
\newcommand{\gradctrafo}{F_\height}
\newcommand{\gradctrafoone}{F_{\height_1}}
\newcommand{\gradctrafotwo}{F_{\height_2}}
\newcommand{\detctrafo}{J_\height}
\newcommand{\detctrafoone}{J_{\height_1}}
\newcommand{\detctrafotwo}{J_{\height_2}}
\newcommand{\cofctrafo}{A_\height}
\newcommand{\cofctrafoone}{A_{\height_1}}
\newcommand{\cofctrafotwo}{A_{\height_2}}
\newcommand{\xip}{\xi'}
\newcommand{\Rtwo}{{\R^2}}
\newcommand{\Rthree}{{\R^3}}
\newcommand{\fhf}{f_\#}
\newcommand{\ghf}{g_\#}
\newcommand{\hzerohf}{h_{0\#}}
\newcommand{\htwohf}{{h_{2\#}}}
\newcommand{\honehf}{{h_{1\#}}}
\newcommand{\Htwo}{{H_2}}
\newcommand{\Hone}{{H_1}}
\newcommand{\htwo}{{h_2}}
\newcommand{\hone}{{h_1}}
\newcommand{\lopr}{\call^\Rey}
\newcommand{\loprrhod}{\call^\Reyrhod}
\newcommand{\loprrhodinv}{\bp{\call^\Reyrhod}^{-1}}
\newcommand{\loprcomp}[1]{\call_{#1}}
\newcommand{\loprinv}{\np{\lopr}^{-1}}
\newcommand{\nopr}{\caln^{R,\rhod}}
\newcommand{\noprs}{\caln^{R(\rhod),\rhod}}
\newcommand{\noprcomp}{\caln}
\newcommand{\ppot}{\alpha}
\newcommand{\mmap}{\calm}
\newcommand{\newCCtr}[2][d]{
\newcounter{#2}\setcounter{#2}{0}
\expandafter\xdef\csname kyedtheconst#2\endcsname{#1}
}
\newcommand{\Cc}[2][nolabel]{
\stepcounter{#2}
\expandafter\ensuremath{\csname kyedtheconst#2\endcsname_{\arabic{#2}}}
\ifthenelse{\equal{#1}{nolabel}}
{}
{\expandafter\xdef\csname kyedconst#1\endcsname
{\expandafter\ensuremath{\csname kyedtheconst#2\endcsname_{\arabic{#2}}}}}
}
\newcommand{\Ccn}[2][nolabel]{
\expandafter\ensuremath{\csname kyedtheconst#2\endcsname}
\ifthenelse{\equal{#1}{nolabel}}
{}
{\expandafter\xdef\csname kyedconst#1\endcsname
{\expandafter\ensuremath{\csname kyedtheconst#2\endcsname}}}
}
\newcommand{\CcSetCtr}[2]{
\setcounter{#1}{#2}
}
\newcommand{\Cclast}[1]{
\expandafter\ensuremath{\csname kyedtheconst#1\endcsname_{\arabic{#1}}}
}
\newcommand{\Ccllast}[1]{
\addtocounter{#1}{-1}
\expandafter\ensuremath{\csname kyedtheconst#1\endcsname_{\arabic{#1}}}
\addtocounter{#1}{1}
}
\newcommand{\const}[1]{
\expandafter{\ifcsname kyedconst#1\endcsname
  \csname kyedconst#1\endcsname
\else
  \errmessage{Undefined Kyedconstant #1.}%
\fi}
}
\theoremstyle{plain}
\newtheorem{thm}{Theorem}[section]
\newtheorem{defn}[thm]{Definition}
\newtheorem{lem}[thm]{Lemma}
\newtheorem{prop}[thm]{Proposition}
\theoremstyle{remark}
\begin{document}
\title{Falling drop in an unbounded liquid reservoir: Steady-state solutions}

\author{
Thomas Eiter\\ 
Fachbereich Mathematik\\
Technische Universit\"at Darmstadt\\
Schlossgartenstr. 7, 64289 Darmstadt, Germany\\
Email: {\texttt{eiter@mathematik.tu-darmstadt.de}}
\and
Mads Kyed\\ 
Flensburg University of Applied Sciences\\
Kanzleistrasse 91-93, 24943 Flensburg\\
Email: {\texttt{mads.kyed@hs-flensburg.de}}
\and
Yoshihiro Shibata\\ 
Department of Mathematics\\
Waseda University\\
3-4-1 Okubo Shinjuku-ku, Tokyo 169-8555, Japan\\
Email: {\texttt{yshibata@waseda.jp}}
}

\date{\today}
\maketitle

\begin{abstract}
The equations governing the motion of a three-dimensional liquid drop moving freely in an
unbounded liquid reservoir under the influence of a gravitational force
are investigated. Provided the (constant) densities in the two liquids are sufficiently close, existence of a steady-state solution is shown. The 
proof is based on a suitable linearization of the equations. A setting
of function spaces is introduced in which the corresponding linear
operator acts as a homeomorphism.
\end{abstract}

\noindent\textbf{MSC2010:} Primary 35Q30, 35J93, 35R35, 76D05, 76D03.\\
\noindent\textbf{Keywords:} Navier-Stokes, two-phase flow, free boundary, steady-state.

\newCCtr[C]{C}
\newCCtr[c]{c}
\newCCtr[M]{M}
\newCCtr[\delta]{delta}
\CcSetCtr{delta}{-1}
\newCCtr[\epsilon]{eps}
\CcSetCtr{eps}{-1}
\let\oldproof\proof
\def\proof{\CcSetCtr{c}{-1}\oldproof} 

\section{Introduction}

Consider a drop of liquid with density $\rho_1$ submerged into an unbounded reservoir 
of liquid with density $\rho_2$. Assume the liquids are immiscible. We investigate the motion of the drop under the influence of a constant gravitational force and surface tension on the interface. Specifically, we shall show existence of a steady-state solution to the governing equations of motion, provided the difference $\snorm{\rho_1-\rho_2}$ of the densities is sufficiently small.
 
The dynamics of a falling (or rising) drop in a quiescent fluid has attracted a lot of attention  
in the field of fluid mechanics. 
Such flows have been studied extensively both experimentally and numerically with truly fascinating outcomes
(see \cite{Bothe_RisingDroplet} for a comprehensive overview and further references), but it remains an intriguing task to analytically validate the observations. 
The observed dynamics can be characterized as a series of bifurcations with respect to the Reynolds number as parameter. Broadly speaking, steady-state solutions are observed for small Reynolds numbers, with
bifurcations into oscillating motions as the Reynolds number increases. Bifurcations into more complex solutions can be observed as the Reynolds number increases even further. 

In the following, we shall investigate the steady-state solutions corresponding to small Reynolds numbers. 
A small Reynolds number is equivalent to a small 
density difference $\snorm{\rho_1-\rho_2}$. One of our aims is to develop a functional analytic framework that can be used
not only to study steady states but also 
as foundation for further investigations into the dynamics described above. In particular, the framework should facilitate 
a stability analysis of the steady states, and an investigation of the 
Hopf-type bifurcations (into oscillating motions) observed in experiments. 
For this purpose, it should satisfy 
certain properties. First and foremost, it should be possible to identify function spaces within the framework 
such that the differential operator of the linearized equations of motion acts as a homeomorphism. 
Second, the framework should have a natural extension to a suitable time-periodic framework (recall that a steady-state solution is trivially also time-periodic). Third, 
the framework should adequately facilitate a spectral analysis of the operators obtained by linearizing 
the equations of motion around a steady state. 
To meet these criteria, we propose a framework of Sobolev spaces. Although a setting of Sobolev spaces seems natural, and by far the most convenient to work with, it is by no
means trivial to identify one that conforms to the problem of a freely falling (or rising) drop. Indeed, one of the novelties of this article is the introduction of such a Sobolev-space setting that meets at least the first and most important criteria, and possibly also the other two, mentioned above, and in which existence of steady-state solutions can be shown effortlessly for small data.
The investigation of steady-state solutions is not new, though. It was initiated by \textsc{Bemelmans} \cite{Bemelmans_LiquidDrops1981}
and advanced by \textsc{Solonnikov} \cite{Solonnikov_LiquidDropInfiniteLiquidMedium,Solonnikov_LiquidDropCylinder}. 
However, the analysis carried out by \textsc{Bemelmans} and \textsc{Solonnikov} do not lead to 
a framework of Sobolev spaces. Indeed, for reasons that will be explained in detail below, the approaches of both \textsc{Bemelmans} and \textsc{Solonnikov} \emph{cannot} be adapted
to a Sobolev-space setting with the desired properties.

We shall consider the most commonly used model for two-phase flows with surface tension on the interface.
It is assumed both fluids are Navier--Stokes liquids, that is, incompressible, viscous, and Newtonian. 
It is further assumed that the fluids are immiscible with surface tension on their interface in normal direction proportional to the mean curvature.  
Moreover, we consider a system in which the drop is a ball $\ball_{R_0}$ of radius $R_0$ when no external forces act on the system, that is, in its stress free configuration.
If we choose a coordinate system attached to the falling drop,
these assumptions lead to the following equations of motion for a steady state (see Section \ref{EqOfMotionSection} for details on the derivation):
\begin{align}\label{intro_mainsystem}
\begin{pdeq}
-\Div \fluidstress\np{\uvel,\upres}+ \rho \, \np{\uvel \cdot \grad \uvel + \rey\partial_3\uvel} & = - \rho\,\gravity\,  e_3 && \text{in } \R^3\setminus\ssinterfaceh, \\
\Div \uvel &= 0 && \text{in } \R^3\setminus\ssinterfaceh,\\
\jump{\fluidstress\np{\uvel,\upres}\nvec}&=\sigma\, \meancurv\np{\height} \nvec  && \text{on } \ssinterfaceh,\\
\jump{\uvel}&=0  && \text{on } \ssinterfaceh, \\
\uvel \cdot \nvec  &= - \rey e_3 \cdot \nvec && \text{on } \ssinterfaceh, \\
\snorml{\Omega^\inner_\eta}=\frac{4\pi}{3}R_0^3,\quad \lim_{\snorm{x}\ra\infty}\uvel\np{x}&=0.
\end{pdeq}
\end{align}
Here, $\ssinterfaceh$ denotes the interface between the two liquids, which we may assume to be a closed manifold parameterized by a  
``height'' function $\height\colon\partial\ball_{R_0}\ra\R$ describing the displacement of the drop's boundary points in normal direction. 
The domain $\Omega^\inner_\eta\subset\R^3$ bounded by $\ssinterfaceh$ 
describes the domain occupied by the drop, and the exterior domain $\Omega^\outer_\eta\coloneqq \R^3\setminus\ssdropdomainhclosed$  
the region of the liquid reservoir.
The drop velocity $-\rey e_3$, $\rey\in\R$, is assumed to be directed along the axis of the (constant) gravitational force $\gravity e_3$.
The first two equations in \eqref{intro_mainsystem} are the Navier--Stokes equations written in a moving frame of reference, 
where $\uvel\colon\R^3\setminus\ssinterfaceh\ra\R^3$ denotes the Eulerian velocity field of the liquids, $\upres\colon\R^3\setminus\ssinterfaceh\ra\R$ the
scalar pressure field, and $\fluidstress\np{\uvel,\upres}$ denotes the corresponding Cauchy stress tensor. The density function $\rho\colon\R^3\setminus\ssinterfaceh\ra\R$ is constant in both components of $\R^3\setminus\ssinterfaceh$.  
The third equation states that the surface tension in normal direction on the interface $\ssinterfaceh$ is proportional to the mean curvature $\meancurv$,
with $\sigma>0$ a constant. The notation $\jump{\cdot}$ is used to denote 
the jump of a quantity across $\ssinterfaceh$. 
Immiscibility of the two liquids under a no-slip assumption at the interface is expressed via the
fourth and fifth equation. Observe that the normal velocity on the interface then coincides with that of the moving frame, 
which moves with the same velocity $- \rey e_3$ as the falling drop.
The equations are augmented with a volume condition for the drop and the requirement that the liquid in the reservoir is at rest at spatial infinity
in the sixth and seventh equation, respectively.

A key part of our investigation is directed towards finding an appropriate linearization of \eqref{intro_mainsystem} with respect to the
unknowns $\uvel$, $\upres$, $\rey$ and $\eta$. The canonical linearization, \textit{i.e.}, around the trivial state $(0,0,0,0)$, leads to
the Navier--Stokes equations \eqrefsub{intro_mainsystem}{1-2} being replaced with the Stokes system 
\begin{align}\label{intro_stokeslinearization}
\begin{pdeq}
-\Div \fluidstress\np{\uvel,\upres} & = f && \text{in } \R^3\setminus\partial\ball_{R_0}, \\
\Div \uvel &= 0 && \text{in } \R^3\setminus\partial\ball_{R_0}.
\end{pdeq}
\end{align}
An analysis based on this linearization would have to be carried out in a setting of function spaces conforming 
to the properties of the Stokes problem. Such a setting, however, is not suitable for an investigation of the exterior domain Navier--Stokes equations in a 
moving frame. 
Since the falling drop, and thus the frame of reference, moves with a nonzero velocity $-\rey e_3$, the appropriate linearization 
of the Navier--Stokes equations in the exterior domain is an Oseen system. At least in a setting of classical Sobolev spaces, 
the steady-state exterior-domain Navier--Stokes equations in a moving frame can only be solved in a framework of Sobolev spaces conforming to the Oseen linearization.
To resolve this issue, we propose to rewrite the system \eqref{intro_mainsystem} as a perturbation around a state $(\uvel_0,\upres_0,\rey_0,\eta_0)$
with $\rey_0\neq 0$. A subsequent linearization of \eqref{intro_mainsystem} then yields the Oseen problem
\begin{align}\label{intro_oseenlinearization}
\begin{pdeq}
-\Div \fluidstress\np{\uvel,\upres} + \rey_0\partial_3\uvel & = f && \text{in } \R^3\setminus\partial\ball_R, \\
\Div \uvel &= 0 && \text{in } \R^3\setminus\partial\ball_R. 
\end{pdeq}
\end{align}
The main challenge, and indeed novelty of this article, is to determine a suitable state 
$(\uvel_0,\upres_0,\rey_0,\eta_0)$ that renders the problem well posed in a framework of classical Sobolev spaces.

The starting point of our investigation were the articles \cite{Solonnikov_LiquidDropInfiniteLiquidMedium,Solonnikov_LiquidDropCylinder} by \textsc{Solonnikov},
which contain a number of truly outstanding ideas on how to analyze \eqref{intro_mainsystem}.
However, \textsc{Solonnikov} overlooks the necessity of an  Oseen linearization as described above.
Instead, he employs a Stokes linearization and consequently a setting of function spaces in which the nonlinear term $\rey\partial_3\uvel$ cannot be correctly treated on the right-hand side.
Our approach resolves this issue.

We derive the steady-state equations of motion for the falling drop and state the main theorem in the following Section \ref{EqOfMotionSection}.
The aforementioned framework of Sobolev spaces is then introduced in Section \ref{Preliminaries_Section}.
Fundamental $\LR{r}$ estimates  are established in Section \ref{AuxiliaryLinearProblemSection}, and a reformulation of \eqref{intro_mainsystem} in
a fixed reference configuration in Section \ref{ReformulationFixedDomainSection}. The linearization around a non-trivial state is
carried out in Section \ref{LinearizationSection}.
In Section \ref{ExistenceSteadySolutionSection} we show in Theorem \ref{ThmLinearOperatorHom} that the operator corresponding to this linearization is
a homeomorphism in our framework of Sobolev spaces, which finally enables us to establish a proof of the main theorem, namely the existence
of a steady-state solution for $\snorm{\rho_1-\rho_2}$ sufficiently small.

\section{Equations of motion and statement of the main theorem}\label{EqOfMotionSection}

We derive the system of equations 
governing the motion of a freely falling drop in a liquid under the influence of a constant gravitational force.
We shall express these equations in a frame of reference with origin in the barycenter 
of the drop. More specifically, we denote by $\barycenter(t)$ the barycenter of the falling drop with respect to an inertial frame, whose coordinates we
denote by $y$, and express the equations of motion in barycentric coordinates $x(t,y)\coloneqq y-\barycenter(t)$.
In these coordinates, the domain
$\tddropdomain\subset\R^3$ occupied by the drop at time $t$ satisfies
\begin{equation}\label{derivation_barycenter}
\int_\tddropdomain x \,\dx = 0. 
\end{equation}
We let $\tdreservoirdomain \coloneqq \R^3\setminus\tddropdomainclosed$
denote the domain of the surrounding liquid reservoir, and put
$\tdliquiddomain \coloneqq \tddropdomain\cup\tdreservoirdomain$.
The surface $\tdinterface \coloneqq\partial\tddropdomain$ describes the interface between the two liquids.
Moreover, we let
$\mu_1,\mu_2$ and $\rho_1,\rho_2$ denote the constant viscosities and densities
of the drop and the liquid reservoir, respectively. The functions 
\begin{align*}
&\tdmu:\bigcup_{t\in\R_+}\set{t}\times\tdliquiddomain\ra\R,\quad
\tdmu(t,x)\coloneqq
\begin{cases}
\mu_1, & x \in \tddropdomain, \\
\mu_2, & x \in \tdreservoirdomain,
\end{cases}\\
&\tdrho:\bigcup_{t\in\R_+}\set{t}\times\tdliquiddomain\ra\R,\quad
\tdrho(t,x)\coloneqq
\begin{cases}
\rho_1, & x \in \tddropdomain, \\
\rho_2, & x \in \tdreservoirdomain,
\end{cases}
\end{align*}
then describe the viscosity and density of the liquid occupying the point $x$ at a given time $t$.
Expressed in a frame of reference attached to the barycenter $\barycenter$, the conservation of
momentum and mass of both liquids
is described by the Navier--Stokes system
\begin{align}\label{derivation_navierstokes}
\begin{pdeq}
\tdrho \np{\pt\tdvel+\nsnonlin{\tdvel}-\nsnonlinb{\dot{\barycenter}}{\tdvel}} &= \Div\fluidstress(\tdvel,\tdpres) + \tdrho\tdgravitation \\
\Div\tdvel &= 0 
\end{pdeq}\qquad \tin \bigcup_{t\in\R_+}\set{t}\times\tdliquiddomain,
\end{align}
where $\tdvel$ denotes the Eulerian velocity field in the liquids, $\tdpres$ the pressure,
\begin{equation*}
\fluidstress(\tdvel,\tdpres)\coloneqq 2\mu \, \symmgrad(\tdvel) - \tdpres \idmatrix, \quad \symmgrad(\tdvel)\coloneqq \half \bp{\grad \tdvel + \grad \tdvel^\transpose}
\end{equation*}
the Cauchy stress tensor, and $\tdgravitation\in\R^3$ a constant
gravitational acceleration.
One can decompose the velocity field and pressure term into  
\begin{equation*}
\tdveldrop:\bigcup_{t\in\R_+}\set{t}\times\tddropdomain\ra\R^3,\quad
\tdpresdrop:\bigcup_{t\in\R_+}\set{t}\times\tddropdomain\ra\R
\end{equation*}
describing the liquid flow in the drop, and another part
\begin{equation*}
\tdvelreservoir:\bigcup_{t\in\R_+}\set{t}\times\tdreservoirdomain\ra\R^3,\quad
\tdpresreservoir:\bigcup_{t\in\R_+}\set{t}\times\tdreservoirdomain\ra\R
\end{equation*}
describing the flow in the reservoir. We employ the notation
\begin{align*}
\jump{\tdvel}\coloneqq\restriction{\tdvel^\inner}{\Gamma}-\restriction{\tdvel^\outer}{\Gamma}
\end{align*}
to denote the jump in a quantity on the interface between the two liquids.
Concerning the physical nature of the interface, we make the basic assumption that slippage between the two liquids cannot occur,
\textit{i.e.}, a no-slip boundary condition, and that liquid cannot be absorbed in the interface. Consequently, there is no jump in the
velocity field neither in tangential nor in normal direction:
\begin{equation}\label{derivation_noslip}
\jump{\tdvel}=0\quad\ton\bigcup_{t\in\R_+}\set{t}\times\tdinterface.
\end{equation}
Since the liquids are immiscible, the normal component of the liquid velocity at the interface coincides with the velocity of the interface itself. If $\tdlagrangecoordinate$
denotes a Lagrangian description of the interface in
barycentric coordinates, the immiscibility condition therefore takes the form 
\begin{equation}\label{derivation_immiscibility}
\tdvel\cdot \tdnvec = \pt\tdlagrangecoordinate\cdot \tdnvec + \dot{\barycenter}\cdot \tdnvec\qquad  \ton\bigcup_{t\in\R_+}\set{t}\times\tdinterface. 
\end{equation}
In the classical two-phase flow model, surface tension on the interface, \ie,  
the difference in normal stresses of the two liquids, is
proportional to the mean curvature in normal direction and in balance in tangential direction: 
\begin{align}
\tdnvec\cdot \jump{\fluidstress\np{\tdvel,\tdpres}\tdnvec}&=\sigma\meancurv \quad
\ton\bigcup_{t\in\R_+}\set{t}\times\tdinterface,\label{derivation_normalstress_normal}\\
\tdprojtang \jump{\fluidstress\np{\tdvel,\tdpres}\tdnvec}&=0 \qquad  
\ton\bigcup_{t\in\R_+}\set{t}\times\tdinterface.\label{derivation_normalstress_tangential}
\end{align}
Since we consider the motion of a drop in a \emph{quiescent} liquid, the velocity
in the reservoir vanishes at spatial infinity
\begin{equation}\label{derivation_velatinfinity}
\lim_{\snorm{x}\ra\infty}\tdvel(t,x) = 0.
\end{equation}
Due to the incompressibility of the liquid drop, its volume is constant. 
Since we consider a drop that takes the shape of the ball $\ball_{R_0}$ in its stress free configuration, 
this volume is prescribed by
\begin{equation}\label{derivation_volumecond}
\snorm{\tddropdomain}=\frac{4\pi}{3}R_0^3.
\end{equation}
In conclusion, the system obtained by combining \eqref{derivation_barycenter}--\eqref{derivation_volumecond} governs the
motion of a liquid drop falling freely in a liquid reservoir under the influence of a constant gravitational force. 

In this article, we seek to establish existence of a steady-state solution, that is, a time-independent solution to
\eqref{derivation_barycenter}--\eqref{derivation_volumecond}.
Such a solution is of course only \emph{steady} with respect to the
chosen frame of reference; in our case the frame attached to the barycenter.
Other types of steady states can be investigated by analyzing time-independent solutions in
other frames. For example, it is conceivable that falling drops can perform 
steady \emph{rotating} motions, which should be investigated by considering the equations of motion
in a rotating frame of reference. 

The unknowns in  
\eqref{derivation_barycenter}--\eqref{derivation_volumecond}
are the functions $\tdvel,\tdpres,\dot{\barycenter},\tdinterfacepara_\Gamma$.
The mean curvature $\meancurv$ can be computed from $\tdinterfacepara_\Gamma$.
The viscosities $\mu_1,\mu_2>0$, surface tension $\sigma>0$ and the prescribed volume $\frac{4\pi}{3}R_0^3$ of the drop are constants, which may be chosen arbitrarily.
Also the gravitational force $\tdgravitation\in\R^3$ is an arbitrary constant, but upon a re-orientation of the coordinates we may assume without loss
of generality that it is directed along the negative $e_3$ axis, \ie, $\tdgravitation=-\gravity e_3$ with $\gravity>0$.
The constant densities $\rho_1,\rho_2>0$ shall be restricted to pairs whose difference $\rho_1-\rho_2$ is sufficiently small.
In this sense, we treat $\rho_1-\rho_2$ as the data of the system.
Since the geometry $\bp{\tddropdomain,\tdreservoirdomain,\tdinterface}$ of the problem is determined by the unknown description
$\tdinterfacepara_\Gamma$ of the interface, \eqref{derivation_barycenter}--\eqref{derivation_volumecond} is a free boundary problem.

As mentioned above, we shall establish existence of a steady-state, that is, time-independent, solution $\np{\tdvel,\tdpres,\dot{\barycenter},\tdinterfacepara_\Gamma}$ to
\eqref{derivation_barycenter}--\eqref{derivation_volumecond}. In this case, the velocity $\dot{\barycenter}$ is a constant vector.
We focus on solutions with $\dot{\barycenter}$ directed along the axis of gravity, \ie, $\dot{\barycenter}=-\rey e_3$.
The steady-state equations of motion then read
\begin{align}\label{derivation_sseq_dimform}
\begin{pdeq}
\tdrho \np{\nsnonlin{\tdvel}+\rey\partial_3\tdvel} &= \Div\fluidstress(\tdvel,\tdpres)-\gravity e_3  && \tin \ssliquiddomain, \\
\Div\tdvel &= 0 && \tin \ssliquiddomain,\\ 
\jump{\tdvel}&=0 && \ton \ssinterface,\\
\tdvel\cdot \tdnvec &= -\rey e_3\cdot \tdnvec && \ton \ssinterface,\\
\tdnvec\cdot\jump{\fluidstress\np{\tdvel,\tdpres}\tdnvec}&=\sigma\meancurv && \ton \ssinterface,\\
\tdprojtang \jump{\fluidstress\np{\tdvel,\tdpres}\tdnvec}&=0 && \ton\ssinterface,\\
\lim_{\snorm{x}\ra\infty}\tdvel(x) = 0,\quad 
\snorm{\ssdropdomain}&=\frac{4\pi}{3}R_0^3,\quad
\int_{\ssdropdomain} x \,\dx = 0,
\end{pdeq}
\end{align}
where the interface $\ssinterface$ is an unknown computed from the parameterization $\ssinterfacepara$. The unknowns in \eqref{derivation_sseq_dimform}
are $\tdvel,\tdpres,\rey,\ssinterfacepara$.

At the outset, it is clear that \eqref{derivation_sseq_dimform} can have multiple solutions. 
This is best illustrated by considering  $\rho_1=\rho_2$, in which case the trivial solution with  $\tdvel=0$, $\rey=0$ and constant pressures $\tdpresdrop$, $\tdpresreservoir$ is a steady-state solution if 
$\sigma\meancurv$ equals the constant hydrostatic pressure difference $\tdpresdrop-\tdpresreservoir$ between the drop and the reservoir.
Since a constant mean curvature $\meancurv$ is realized whenever $\ssdropdomain$ is a
multiple of disjoint balls, we obtain
for each $\ssinterfacepara$ describing one or more spheres a trivial solution by 
adjusting the
hydrostatic pressure difference accordingly (depending on the fixed volume $\snorm{\ssdropdomain}$).
In the case \eqref{derivation_sseq_dimform} above, the fixed volume of $\snorm{\ssdropdomain}$ coincides with the volume of the ball $\ball_{R_0}$.
With constant pressures satisfying $\tdpresdrop-\tdpresreservoir=\frac{2}{R_0}$, 
the ball $\ball_{R_0}$ therefore becomes an admissible steady-state drop configuration when $\rho_1=\rho_2$.
We shall single out this configuration for further investigation in the sense that we investigate non-trivial steady-states with a configuration close to the ball $\ball_{R_0}$ for $\rho_1\neq\rho_2$ with $\rho_1-\rho_2$ sufficiently small. 

From a physical perspective, a smallness condition is only meaningful when expressed in a non-dimensional form.
In order to obtain a dimensionless formulation of \eqref{derivation_sseq_dimform}, we
choose $R_0$ as characteristic length scale, $V_0\coloneqq\sqrt{\gravity R_0}$ as the characteristic velocity, $\rho_1+\rho_2$ as characteristic
density, $(\rho_1+\rho_2)R_0 V_0$ as the characteristic viscosity, and $(\rho_1+\rho_2)R_0 V_0^2$ as the characteristic surface tension.
Investigating the resulting non-dimensional equations of motion, we will establish existence of a non-trivial steady-state solution with drop configuration close to the unit ball $\B_{1}$. For this purpose, it is convenient to introduce (in the non-dimensionalized coordinates) the normalized  pressures
\begin{align*}
&\sspresdrop(x):\ssdropdomain\ra\R,\quad
\sspresdrop(x)\coloneqq  \tdpresdrop(x)+\rho_1\, e_3\cdot x - 2\sigma\\  
&\sspresreservoir(x):\ssreservoirdomain\ra\R,\quad
\sspresreservoir(x)\coloneqq  \tdpresreservoir(x)+\rho_2\,e_3\cdot x.  
\end{align*}
We then obtain the following system of non-dimensional equations:
\begin{align}\label{derivation_sseq}
\begin{pdeq}
\tdrho \np{\nsnonlin{\tdvel}+\rey\partial_3\tdvel} &= \Div\fluidstress(\tdvel,\sspres)  && \tin \ssliquiddomain, \\
\Div\tdvel &= 0 && \tin \ssliquiddomain,\\ 
\jump{\tdvel}&=0 && \ton \ssinterface,\\
\tdvel\cdot \tdnvec &= -\rey e_3\cdot\nvec && \ton \ssinterface,\\
\tdnvec\cdot\jump{\fluidstress\np{\tdvel,\sspres}\tdnvec}&=\sigma(\meancurv+2) + (\rho_1-\rho_2)e_3\cdot x && \ton \ssinterface,\\
\tdprojtang \jump{\fluidstress\np{\tdvel,\sspres}\tdnvec}&=0 && \ton\ssinterface,\\
\lim_{\snorm{x}\ra\infty}\tdvel(x) = 0,\quad 
\snorm{\ssdropdomain}&=\frac{4\pi}{3},\quad
\int_{\ssdropdomain} x \,\dx = 0.
\end{pdeq}
\end{align}
Observe that the mean curvature now appears in the form $\np{\meancurv+2}$ that vanishes if $\Gamma$ is the unit sphere, which means that
$(\tdvel,\sspres,\rey)=(0,0,0)$ is a trivial solution when $\rho_1-\rho_2=0$.

We shall employ a parameterization of $\ssinterface$ 
over the unit sphere $\sphere^2\subset\R^3$ and subsequently linearize \eqref{derivation_sseq}. The linearization of the operator $\sigma(\meancurv+2)$, however, has a non-trivial kernel. To circumvent an introduction of the corresponding compatibility conditions, we
employ an idea from \cite{Solonnikov_LiquidDropCylinder} and replace the two equations
\begin{equation}\label{derivation_SolTrick_eqtoreplace}
\tdnvec\cdot \jump{\fluidstress\np{\tdvel,\sspres}\tdnvec} = \sigma(\meancurv+2) + (\rho_1-\rho_2)e_3\cdot x,\qquad
\int_{\ssdropdomain} x \,\dx = 0
\end{equation}
in \eqref{derivation_sseq} with the equations
\begin{align}\label{derivation_SolTrick_eqreplacement}
\begin{aligned}
&\tdnvec\cdot \jump{\fluidstress\np{\tdvel,\sspres}\tdnvec} = \sigma(\meancurv+2) +
\frac{1}{4\pi}\nvec \cdot \int_{\ssdropdomain} x \, \dx + (\rho_1-\rho_2)e_3\cdot x,\\
&\int_\ssinterface \jump{\fluidstress(\tdvel,\sspres)\nvec} \,\dS = (\rho_1-\rho_2) \frac{4\pi}{3} e_3.
\end{aligned}
\end{align}
The resulting system then reads
\begin{align}\label{derivation_sseq_AfterSolonnikovTrick}
\begin{pdeq}
\tdrho \np{\nsnonlin{\tdvel}+\rey\partial_3\tdvel} &= \Div\fluidstress(\tdvel,\sspres)  && \tin \ssliquiddomain, \\
\Div\tdvel &= 0 && \tin \ssliquiddomain,\\ 
\jump{\tdvel}&=0 && \ton \ssinterface,\\
\tdvel\cdot \tdnvec &= -\rey e_3 \cdot\nvec&& \ton \ssinterface,\\
\tdprojtang \jump{\fluidstress\np{\tdvel,\sspres}\tdnvec}&=0 && \ton\ssinterface,\\
\tdnvec\cdot \jump{\fluidstress\np{\tdvel,\sspres}\tdnvec} &= \sigma(\meancurv+2) +
\frac{1}{4\pi}\nvec \cdot \int_{\ssdropdomain} x \, \dx + (\rho_1-\rho_2)e_3\cdot x&& \ton\ssinterface,\\
\int_\ssinterface \jump{\fluidstress(\tdvel,\sspres)\nvec} \,\dS &=(\rho_1-\rho_2) \frac{4\pi}{3} e_3,\\
\lim_{\snorm{x}\ra\infty}\tdvel(x) = 0,\quad 
\snorm{\ssdropdomain}&=\frac{4\pi}{3}.
\end{pdeq}
\end{align}
The systems \eqref{derivation_sseq} and \eqref{derivation_sseq_AfterSolonnikovTrick} are equivalent. 
Clearly, \eqref{derivation_sseq} implies \eqref{derivation_sseq_AfterSolonnikovTrick}. To verify the reverse implication, observe that
\eqrefsub{derivation_sseq_AfterSolonnikovTrick}{5-7} imply
\begin{align*}
(\rho_1-\rho_2) \frac{4\pi}{3} e_3
&= \int_\ssinterface \jump{\fluidstress(\tdvel,\sspres)\nvec} \,\dS \\
&= \int_\ssinterface \bp{\tdnvec\cdot \jump{\fluidstress(\tdvel,\sspres)\nvec}}\tdnvec \,\dS \\
&= \int_{\ssinterface} \sigma(\meancurv+2)\tdnvec\,\dS + \frac{1}{4\pi} \int_\ssinterface \tdnvec\otimes\tdnvec \,\dS \int_{\ssdropdomain} x\,\dx +(\rho_1-\rho_2) \frac{4\pi}{3} e_3 \\
&= \int_{\ssinterface} \sigma\Delta_{\ssinterface}x\,\dS + 2\sigma \int_{\ssinterface}\tdnvec\,\dS + \frac{1}{4\pi} \int_\ssinterface \tdnvec\otimes\tdnvec \,\dS \int_{\ssdropdomain} x\,\dx +(\rho_1-\rho_2) \frac{4\pi}{3} e_3\\
&= 0 + 0 + \frac{1}{4\pi} \int_{\ssinterface} \tdnvec\otimes\tdnvec \,\dS \int_{\ssdropdomain} x\,\dx +(\rho_1-\rho_2) \frac{4\pi}{3} e_3.
\end{align*}
The matrix $\int_{\ssinterface} \tdnvec\otimes\tdnvec \,\dS$ is symmetric positive definite and thus invertible. Consequently, the equation above implies
$\int_{\ssdropdomain} x\,\dx=0$. We conclude that \eqref{derivation_sseq_AfterSolonnikovTrick} implies \eqref{derivation_sseq}. 

Since we investigate existence of non-trivial steady-states in a drop configuration close to the ball $\ball_{1}$ (in non-dimensionalized coordinates)
under the restriction that the difference in
densities of the two liquids is sufficiently small,
it is convenient to introduce 
\begin{equation*}
\rhod\coloneqq\rho_1-\rho_2
\end{equation*}
as smallness parameter.
Moreover, it is convenient to parameterize the interface $\ssinterface$ via a height function $\height : \sphere^2 \to \R$ that
describes the drop's displacement in normal direction with respect to its
unit sphere $\sphere^2\subset\R^3$ stress-free configuration.
The geometry then becomes a function of $\height$:
\begin{align*}
&\ssdropdomain=\ssdropdomainh \coloneqq \setcl{r\zeta}{\zeta\in\sphere^2,\ 0\leq r < 1+\height(\zeta)},
\quad \ssreservoirdomain= \ssreservoirdomainh  \coloneqq\setcl{r\zeta}{\zeta\in\sphere^2,\ 1+\height(\zeta)<r},\\ 
&\ssinterface=\ssinterfaceh\coloneqq \setcl{(1+\height(\zeta))\zeta}{\zeta\in\sphere^2},
\quad \ssliquiddomain=\ssliquiddomainh\coloneqq  \ssdropdomainh\cup\ssreservoirdomainh.
\end{align*}
The system of steady-state equations of motion finally takes the form
\begin{align}\label{derivation_sseq_final}
\begin{pdeq}
\tdrho \np{\nsnonlin{\tdvel}+\rey\partial_3\tdvel} &= \Div\fluidstress(\tdvel,\sspres)  && \tin \ssliquiddomainh, \\
\Div\tdvel &= 0 && \tin \ssliquiddomainh,\\ 
\jump{\tdvel}&=0 && \ton \ssinterfaceh,\\
\tdvel\cdot \tdnvec &= -\rey e_3\cdot\nvec && \ton \ssinterfaceh,\\
\tdprojtang \jump{\fluidstress\np{\tdvel,\sspres}\tdnvec}&=0 && \ton\ssinterfaceh,\\
\tdnvec\cdot \jump{\fluidstress\np{\tdvel,\sspres}\tdnvec} &= \sigma(\meancurv+2) +
\frac{1}{16\pi}\nvec \cdot \int_{\sphere^2}\zeta\bp{\np{1+\height(\zeta)}^4-1}\,\dS
+ \rhod e_3\cdot x&& \ton\ssinterfaceh,\\
\int_\ssinterfaceh \jump{\fluidstress(\tdvel,\sspres)\nvec} \,\dS &= \rhod \frac{4\pi}{3} e_3,\quad
\int_{\sphere^2}\bp{\np{1+\height(\zeta)}^3-1} \,\dS = 0,\quad
\lim_{\snorm{x}\ra\infty}\tdvel(x) = 0
\end{pdeq}
\end{align}
with respect to unknowns  $\np{\tdvel,\sspres,\rey,\height}$. 

As the main result in the article we prove existence of a solution to the steady-state equations of motion
\eqref{derivation_sseq_final} under a smallness condition on the density difference $\rhod$.

\begin{thm}[Main Theorem]\label{MainThm}
There is an $\epsilon>0$ such that for $0<\snorm{\rhod}\leq \epsilon$ there is a
solution 
\begin{align*}
\np{\tdvel,\sspres,\rey,\height}\in\CRi\np{\ssliquiddomainh}^3\times\CRi\np{\ssliquiddomainh}\times\R\times\CRi\np{\sphere^2}
\end{align*}
to \eqref{derivation_sseq_final}. The solution is smooth up to the interface, that is,
\begin{align}\label{MainThm_IntegrabilityProperties_SmoothUpToBoundary}
\restriction{\tdvel}{\ssreservoirdomainh},\,\restriction{\sspres}{\ssreservoirdomainh}\in\CRi\bp{\ssreservoirdomainhclosed},\qquad
\restriction{\tdvel}{\ssdropdomainh},\,\restriction{\sspres}{\ssdropdomainh}\in\CRi\bp{\ssdropdomainhclosed}.
\end{align}
Moreover, it 
possesses the integrability properties
\begin{align}\label{MainThm_IntegrabilityProperties}
\forall q\in\np{1,2}:
\quad \tdvel\in\LR{\frac{2q}{2-q}}\np{\ssliquiddomainh},
\quad \grad\tdvel\in\LR{\frac{4q}{4-q}}\np{\ssliquiddomainh},
\quad \partial_3\tdvel,\grad^2\tdvel,\grad\sspres\in\LR{q}\np{\ssliquiddomainh},
\end{align}
and admits the representation
\begin{align}\label{MainThm_AsymptoticProfile}
\tdvel(x) = \frac{4\pi}{3}\rhod\,\fundsoloseen(x)\,e_3
+ \bigo\bp{\snorm{x}^{-\frac{3}{2}+\epsilon}} \quad\tas \snorm{x}\ra\infty
\end{align}
for all $\epsilon>0$, where $\fundsoloseen$ denotes the Oseen fundamental solution%
\footnote{An explicit formula for $\fundsoloseen$ can be found in \cite[Section VII.3]{GaldiBookNew} for example.}%
. The solution is symmetric with respect to rotations leaving $e_3$ invariant:
\begin{align}\label{MainThm_Symmetry}
\forall \rotmatrix\in\sorthomatrixspace{3},\ \rotmatrix e_3=e_3:\quad R^\transpose \tdvel(Rx)=\tdvel(x),\ \tdpres(Rx)=\tdpres(x),\ \height(Rx)=\height(x),
\end{align}
and the velocity $\rey$ of the drop's barycenter is non-vanishing.
\end{thm}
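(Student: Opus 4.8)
The plan is to solve \eqref{derivation_sseq_final} by a contraction mapping argument built on the Oseen linearization \eqref{intro_oseenlinearization} and the homeomorphism property from Theorem \ref{ThmLinearOperatorHom}. \textbf{Step 1 (fixed reference configuration).} First I would apply the transformation of Section \ref{ReformulationFixedDomainSection} to pull \eqref{derivation_sseq_final}, posed on the unknown geometry $\np{\ssdropdomainh,\ssreservoirdomainh,\ssinterfaceh}$, back to the fixed configuration $\np{\ball_1,\R^3\setminus\overline{\ball_1},\sphere^2}$ via the $\height$-dependent diffeomorphism $\ctrafo$. All dependence on $\height$ then enters through perturbations of the coefficients and of the transmission conditions that vanish to first order at $\height=0$, and through the linearization $\sigma\np{\LaplaceBeltrami+2}\height$ of $\sigma\np{\meancurv+2}$, whose kernel (the degree-one spherical harmonics) is exactly the part neutralized by the modification in \eqref{derivation_SolTrick_eqreplacement}.

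\textbf{Step 2 (leading-order state and Oseen linearization).} Since the frame moves with velocity $-\rey e_3$ and the force balance $\int_{\ssinterfaceh}\jump{\fluidstress\np{\tdvel,\sspres}\nvec}\,\dS=\rhod\frac{4\pi}{3}e_3$ forces $\rey$ to be of order $\rhod$ (the Oseen drag being linear in the speed), a Stokes linearization is inadmissible in a classical Sobolev setting, as $\rey\partial_3\tdvel$ would then have to be treated as data. Following Section \ref{LinearizationSection} I would instead fix $\rey_0=\rey_0(\rhod)\neq0$ of order $\rhod$, determined together with a first approximation $\np{\uvel_0,\sspres_0,\height_0}$ by solving the Oseen-linearized problem with the $\bigo(\rhod)$ gravitational data, where the relevant linear operator $\linopr^{\rey_0}$ is that of \eqref{intro_oseenlinearization} augmented by the linearized transmission, force, and volume conditions. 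Writing $\tdvel=\uvel_0+\wvel$, $\sspres=\sspres_0+\wpres$, $\rey=\rey_0+\rey_*$, $\height=\height_0+\theight$ and substituting into the transformed system yields $\linopr^{\rey_0}\np{\wvel,\wpres,\rey_*,\theight}=\caln\np{\wvel,\wpres,\rey_*,\theight}$, where $\caln$ gathers the convective term $\nsnonlin{\tdvel}$, the product $\rey_*\partial_3\np{\uvel_0+\wvel}$, and the coefficient and curvature remainders that are quadratic in $\height$; by construction $\caln\np{0,0,0,0}=\bigo(\rhod^2)$, and on small balls in the $\rey_0$-dependent Sobolev framework of Section \ref{Preliminaries_Section} the map $\caln$ is Lipschitz with a small constant.

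\textbf{Step 3 (contraction and qualitative properties).} By Theorem \ref{ThmLinearOperatorHom}, $\linopr^{\rey_0}$ is a homeomorphism between the prescribed spaces with inverse bounded uniformly as $\rey_0\to0$, so $\np{\wvel,\wpres,\rey_*,\theight}\mapsto\bp{\linopr^{\rey_0}}^{-1}\caln\np{\wvel,\wpres,\rey_*,\theight}$ maps a ball of radius $\sim\snorm{\rhod}$ into itself and is a contraction once $\snorm{\rhod}\leq\epsilon$; Banach's fixed-point theorem then produces a unique small correction, hence a solution $\np{\tdvel,\sspres,\rey,\height}$ of \eqref{derivation_sseq_final}. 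Pulling back by $\bp{\ctrafo}^{-1}$ to $\ssliquiddomainh$ gives the integrability properties \eqref{MainThm_IntegrabilityProperties}, which are precisely the norms defining the target space. Carrying the whole argument out inside the closed subspace of functions satisfying \eqref{MainThm_Symmetry} --- which $\linopr^{\rey_0}$ and $\caln$ preserve, since $\gravity e_3$ and the reference geometry are invariant under rotations fixing $e_3$ --- gives the symmetry statement; smoothness up to the interface \eqref{MainThm_IntegrabilityProperties_SmoothUpToBoundary} and $\height\in\CRi\np{\sphere^2}$ follow by a standard elliptic bootstrap for the two-phase Navier--Stokes transmission problem with a smooth interface. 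For \eqref{MainThm_AsymptoticProfile} I would represent $\tdvel$ in the reservoir by the Oseen fundamental solution $\fundsoloseen$ acting on the total force $\rhod\frac{4\pi}{3}e_3$ exerted on the drop, plus a volume potential of the remaining quadratically small, faster-decaying terms, and invoke the known pointwise decay of $\fundsoloseen$ and its derivatives to bound the remainder by $\bigo\bp{\snorm{x}^{-\frac{3}{2}+\epsilon}}$; finally $\rey=\rey_0+\rey_*$ with $\rey_0$ of exact order $\rhod\neq0$ and $\rey_*=\bigo(\rhod^2)$, so $\rey\neq0$ for $\epsilon$ small.

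\textbf{Main obstacle.} The crux is the interplay, in Steps 2--3, between the anisotropic Oseen norms and the nonlinearity: one must choose $\rey_0(\rhod)$ so that the linearized force balance is solvable and $\linopr^{\rey_0}$ is a homeomorphism with inverse bounded uniformly as $\rey_0\to0$ --- the content of Theorem \ref{ThmLinearOperatorHom}, which is the technical heart of the paper --- and then show that the convective term $\nsnonlin{\tdvel}$, coupled across the interface between the Oseen-type exterior and the bounded, Stokes-type interior, together with the $\height$-dependent change of variables, is controlled in the data space with a genuinely small Lipschitz constant so that the contraction closes. Everything else is routine once this machinery is in place.
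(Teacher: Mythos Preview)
Your overall strategy matches the paper's: reformulate on the fixed domain (Section~\ref{ReformulationFixedDomainSection}), linearize around a nontrivial state so that the Oseen operator appears (Section~\ref{LinearizationSection}), invoke Theorem~\ref{ThmLinearOperatorHom}, and close by contraction (Theorem~\ref{ThmExistenceNonLinearFixedDomain}). Two points, however, need correction.

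First, the choice of the first-order state is more delicate than your Step~2 suggests. The paper does \emph{not} solve an Oseen-linearized problem for the first approximation: the auxiliary field $(\Uvel,\Upres)$ solves the \emph{Stokes} system \eqref{SystemFluidAux}, from which $\Reyrhod$ is read off via \eqref{EquationReyAux}; this avoids the circular dependence on $\rey_0$ your description would create. But a Stokes field decays only like $\snorm{x}^{-1}$, so terms such as $(\Reyrhod+\reyd)^2\grad\Uvel\,e_3$ fail to lie in $\LR{q}$ for $q<\tfrac{3}{2}$. The remedy---the step your sketch omits---is to truncate, setting $\Uvel_R=\cutoff_R\Uvel$ with $R=\snorm{\rhod}^{-\ppot}$; then the offending terms are compactly supported and their $\LR{q}$ norms carry explicit powers of $R$ that trade against powers of $\snorm{\rhod}$ (Lemma~\ref{LemmaAuxFieldTruncatedEstimates}). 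This truncation, not an Oseen first approximation, is what makes the right-hand side admissible.

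Second, your argument for $\rey\neq0$ does not go through. The fixed point lives in a ball of radius $\snorm{\rhod}^{\ppot}$ with $\tfrac34<\ppot<1$, so one only gets $\snorm{\reyd}\leq\snorm{\rhod}^{\ppot}$, which for small $\snorm{\rhod}$ is \emph{larger} than $\snorm{\Reyrhod}\sim\snorm{\rhod}$; your claimed $\rey_*=\bigo(\rhod^2)$ would require $\caln(0)=\bigo(\rhod^2)$, but $\noprcomp_7(0)$ contains the $\bigo(\rhod)$ contribution $\Reyrhod\,\nvec\cdot\jump{\fluidstress(\Uvel,\Upres)\nvec}-\rhod\,e_3\cdot\nvec$. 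The paper argues instead by contradiction: if $\rey=0$, testing \eqrefsub{derivation_sseq_final}{1} against $\tdvel$ and integrating by parts forces $\rhod=0$. The symmetry \eqref{MainThm_Symmetry} is likewise obtained a posteriori from uniqueness of the fixed point (applied to the rotated solution) rather than by restricting to a symmetric subspace; this matters because the paper first passes to the reduced system \eqref{SystemFluidReferenceConfiguration}, which keeps only the $e_3$ component of the force balance, and the symmetry is what recovers the full \eqref{SystemFluidReferenceConfigurationPreSimplification}.
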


By far the most challenging part of proving Theorem \ref{MainThm} is to establish the existence of a solution. As mentioned in the introduction, via a
perturbation around a non-trivial state we are able to solve the system in a setting of Sobolev spaces adapted from the 3D exterior-domain Oseen linearization of the
Navier--Stokes equations. Consequently, we are led to a solution with the integrability properties \eqref{MainThm_IntegrabilityProperties}. The symmetry \eqref{MainThm_Symmetry} follows from
the observation that \eqref{derivation_sseq_final} is invariant with respect to rotations leaving $e_3$ invariant.
Higher-order regularity is obtained via a standard approach utilizing the ellipticity of \eqref{derivation_sseq_final}, while the asymptotic profile \eqref{MainThm_AsymptoticProfile} is a direct consequence of
\eqref{MainThm_IntegrabilityProperties} and and a celebrated result of \textsc{Babenko} \cite{babenko1973}  and \textsc{Galdi} \cite{Galdi1992b}. Observe that the coefficient vector 
in the asymptotic expansion, which at the outset is given by 
\begin{align*}
\int_{\ssinterfaceh} \restriction{\fluidstress(\tdvel,\sspres)}{\ssreservoirdomainh}\nvec\,\dS ,
\end{align*}
coincides with the net force $\frac{4\pi}{3} \rhod e_3=\rhod\snorm{\ssdropdomainh}e_3$ acting on 
the liquid drop, that is, the difference of the gravitational force and the buoyancy force.

\section{Notation}\label{Notation_Section}

We use capital letters to denote global constants in the proofs and theorems, 
and small letters for local constants appearing in the proofs.

By $\ball_R\coloneqq\ball_R(0)$ we denote a ball in $\Rn$ centered at $0$ with radius $R$. 
Moreover, we let 
\begin{align*}
\ball^R\coloneqq\R^3\setminus\overline{\ball_R},\quad
\ball_{R,r}\coloneqq \ball_R\setminus\overline\ball_r,\quad
\Omega_R\coloneqq\Omega\cap\ball_R,\quad
\Omega^R\coloneqq\Omega\cap\ball^R
\end{align*}
for a domain $\Omega\subset\Rn$.
Additionally, we use $\sphere^2\coloneqq\partial\ball_1$ to denote the unit sphere. 
By
\begin{align*}
\Rdot^3\coloneqq\setcl{\np{x_1,x_2,x_3}\in\R^3}{x_3\neq0}
\end{align*}
we denote the twofold half space,
which is the union of the two domains
\begin{align*}
\Rdot^3_+\coloneqq\setcl{\np{x_1,x_2,x_3}\in\R^3}{x_3>0}, \qquad
\Rdot^3_-\coloneqq\setcl{\np{x_1,x_2,x_3}\in\R^3}{x_3<0}.
\end{align*}
We use the notation $\np{x',x_3}$ for a vector $x=\np{x_1,x_2,x_3}\in\R^3$.

Lebesgue spaces are denoted by $\LR{q}(\Omega)$
with associated norms $\norm{\cdot}_{q,\Omega}$.
By $\WSR{k}{q}(\Omega)$ we denote the corresponding Sobolev space of order $k\in\N_0$
with norm $\norm{\cdot}_{k,q,\Omega}$,
and we introduce the subspaces
\begin{align*}
\WSRN{k}{q}(\Omega)\coloneqq\overline{\CRci(\Omega)}^{\norm{\cdot}_{k,q,\Omega}}.
\end{align*}
Moreover, $\WSR{-k}{q}(\Omega)$ and $\WSRN{-k}{q}(\Omega)$ denote the 
dual spaces of $\WSR{k}{q'}(\Omega)$ and $\WSRN{k}{q'}(\Omega)$, respectively,
where $q'\coloneqq\frac{q}{q-1}$.
We further introduce homogeneous Sobolev spaces $\WSRhom{k}{q}(\Omega)$ defined by
\begin{align*}
\WSRhom{k}{q}(\Omega) \coloneqq
\setc{
\uvel\in\LRloc{1}(\Omega)}{
\grad^k\uvel\in\LR{q}(\Omega)
},
\end{align*}
and the corresponding seminorm
\begin{align*}
\snorm{\uvel}_{k,q,\Omega}\coloneqq
\norm{\grad^k\uvel}_{q,\Omega}\coloneqq
\sum_{\snorm{\alpha}=k}\norm{\partial^\alpha\uvel}_{q,\Omega}.
\end{align*}
In general, $\WSRhom{k}{p}(\Omega)$ is not a Banach space. 
However, $\snorm{\,\cdot\,}_{k,q,\Omega}$ defines a norm on $\CRci(\Omega)$,
and the completion
\begin{align*}
\WSRhomN{k}{q}(\Omega)\coloneqq\overline{\CRci(\Omega)}^{\snorm{\,\cdot\,}_{k,q,\Omega}}
\end{align*}
is therefore a Banach space.
By Sobolev's Embedding Theorem, $\WSRhomN{k}{q}(\Omega)$ can be identified with a subspace of $\LRloc{1}(\Omega)$ if $kq<3$.
We denote its dual space by $\WSRhomN{-k}{q'}(\Omega)$.
For a sufficiently smooth manifold $\Gamma\subset\R^3$ and  $s>0$, $s\not\in\N$, we let
$\WSR{s}{q}(\Gamma)$ denote the Sobolev--Slobodeckij space of order $s$ with norm $\norm{\cdot}_{s,q,\Gamma}$.

\section{Preliminaries}\label{Preliminaries_Section}

In this section we introduce a bespoke framework of Sobolev spaces for the investigation of  
\eqref{derivation_sseq_final}. For this purpose, we let $\Omega\subset\R^3$ denote a domain of the same type as in
Section \ref{EqOfMotionSection}, that is, we assume 
\begin{align}\label{Preliminaries_DomainTypeDef}
\begin{aligned}
&\Omega^\inner\subset\R^3\text{ is a bounded domain and } \Omega^\outer\coloneqq \R^3\setminus\overline{\Omega^\inner} \text{ is a domain,}\\
&\Gamma\coloneqq\partial\Omega^\inner,\qquad
\Omega\coloneqq\Omega^\inner\cup\Omega^\outer=\R^3\setminus\Gamma.
\end{aligned}
\end{align}
For a function $\uvel:\Omega\to\R$ we use the abbreviations
\begin{align*}
\uvel^\inner\coloneqq\restriction{\uvel}{\Omega^\inner}, \qquad 
\uvel^\outer\coloneqq\restriction{\uvel}{\Omega^\outer}.
\end{align*}
The function $\nvec=\nvec_\Gamma$ denotes the unit outer normal at $\Gamma$.
If $\uvel$ is sufficiently regular, we set
\begin{align*}
\jump{\uvel}\coloneqq\restriction{\uvel^\inner}{\Gamma}-\restriction{\uvel^\outer}{\Gamma},
\end{align*}
where the restriction has to be understood in the trace sense.
Furthermore, $\delta(\Omega)$ denotes the diameter of $\Omega^\inner$.

When considering a function $\uvel:\Omega\to\R$,
we often have to distinguish between its properties
on the disjoint sub-domains $\Omega^\inner$ and $\Omega^\outer$.
To this end, function spaces of the type
\begin{align*}
X\coloneqq\setcl{\uvel:\Omega\to\R}{\uvel^\inner\in X^\inner,\, \uvel^\outer\in X^\outer}
\end{align*}
are introduced.
Equipped with the norm
\begin{align*}
\norm{\uvel}_X\coloneqq\norm{\uvel^\inner}_{X^\inner}+\norm{\uvel^\outer}_{X^\outer},
\end{align*} 
such a space $X$  is isomorphic to the direct sum of the spaces $X^\inner$ and $X^\outer$.
Clearly, $X$ is a Banach space if $X^\inner$ and $X^\outer$ are so.

Let $q\in\bp{1,\frac{3}{2}}$ and $r\in(3,\infty)$. 
For $\Rey\in\R, \Rey\neq 0$ the space
\begin{align*}
\Xoseen\coloneqq\Xoseen(\Omega^\outer)
\coloneqq\setcl{\uvel\in\LRloc{1}(\Omega^\outer)^3}{
\uvel\in\LR{\frac{2q}{2-q}}\cap\WSRhom{1}{\frac{4q}{4-q}}\cap\WSRhom{2}{q}\cap\WSRhom{2}{r},\,
\partial_3 \uvel\in\LR{q}\cap\LR{r}
}
\end{align*}
equipped with the norm
\begin{align*}
\oseennorm{\uvel}{\Rey}\coloneqq
\snorm{\Rey}^{\frac{1}{2}}\norm{\uvel}_{\frac{2q}{2-q}}
+\snorm{\Rey}^{\frac{1}{4}}\norm{\grad \uvel}_{\frac{4q}{4-q}}
+\norm{\grad^2 \uvel}_{q}
+\norm{\grad^2 \uvel}_{r}
+\snorm{\Rey}\,\norm{\partial_3 \uvel}_{q}
+\snorm{\Rey}\,\norm{\partial_3 \uvel}_{r}
\end{align*}
is the canonical solution space for solutions to the exterior domain Oseen problem
\begin{align}\label{Preliminaries_BespokeOseen}
\begin{pdeq}
-\Div \fluidstress(\uvel,\upres) + \Rey \partial_3 \uvel&=f && \text{in } \Omega^\outer , \\
\Div \uvel &= g && \text{in } \Omega^\outer
\end{pdeq}
\end{align}
for forcing terms $f$ in $\LR{q}\bp{\Omega^\outer}\cap\LR{r}\bp{\Omega^\outer}$; see for example \cite[Chapter VII.7]{GaldiBookNew}. 
Let
\begin{align*}
\XreyOne
&\coloneqq\setcl{\uvel\in\LRloc{1}(\R^3)^3}{
\uvel^\inner\in\WSR{2}{r},\,
\uvel^\outer\in\Xoseen,\,
\jump{u}=0
},\\
\Xspace_2
&\coloneqq\setcl{\upres\in\LRloc{1}(\R^3)}{
\upres^\inner\in\WSR{1}{r},\,
\upres^\outer\in\WSRhom{1}{q}\cap\WSRhom{1}{r}\cap\LR{\frac{3q}{3-q}}
},\\
\Xspace_3&\coloneqq\R,\\
\Xspace_4&\coloneqq\WSR{3-1/r}{r}(\Gamma),
\end{align*}
and
\begin{align*}
\Yspace_1&
\coloneqq \LR{q}(\R^3)^3\cap\LR{r}(\R^3)^3,\\
\Yspace_2
&\coloneqq \setcl{g\in\LRloc{1}(\R^3)}{
g^\inner\in\WSR{1}{r},\,
g^\outer\in\WSRhom{1}{q}\cap\WSRhom{1}{r}\cap\LR{\frac{3q}{3-q}}
},\\
\Yspace_3
&\coloneqq \WSR{2-1/r}{r}(\Gamma), \\
\Yspace_{2,3}
&\coloneqq \setcL{(g,h)\in\Yspace_2\times\Yspace_3}{
\int_{\Omega^\inner}g\,\dx=\int_{\Gamma}h\,\dS
},\\
\Yspace_4
&\coloneqq \setcl{h\in\WSR{1-1/r}{r}(\Gamma)^3}{
h\cdot\nvec=0
},\\
\Yspace_5
&\coloneqq\Yspace_6\coloneqq\R,\\
\Yspace_7
&\coloneqq\WSR{1-1/r}{r}(\Gamma).
\end{align*}
The bespoke framework of Sobolev spaces we shall employ in our investigation of \eqref{derivation_sseq_final} is then given by
\begin{align*}
&\Xrey\coloneqq\Xrey(\Omega)\coloneqq \XreyOne\times\Xspace_2\times\Xspace_3\times\Xspace_4,\\
&\Yspace\coloneqq\Yspace(\Omega)\coloneqq \Yspace_1\times\Yspace_{2,3}\times\Yspace_4\times\Yspace_5\times\Yspace_6\times\Yspace_7.
\end{align*}
In Theorem \ref{ThmLinearOperatorHom} we show that the operator corresponding to the appropriate linearization of \eqref{derivation_sseq_final} maps $\Xrey$ homeomorphically onto $\Yspace$.

The following embedding is valid:
\begin{prop}\label{PropEmbeddingsX1}
Let $\uvel\in\XreyOne$ with $q\in\bp{1,\frac{3}{2}}$, $r\in(3,\infty)$, and consider
$s\in\bb{\frac{2q}{2-q},\infty}$ and $t\in\bb{\frac{4q}{4-q},\infty}$. 
Then $\uvel\in\LR{s}(\Omega)\cap\WSRhom{1}{t}(\Omega)$.
If $s\geq\frac{3q}{3-2q}$ and $t\geq\frac{3q}{3-q}$, then
\begin{align}\label{PropEmbeddingsX1_Embedding1}
\norm{\uvel}_{s}+\norm{\grad\uvel}_{t}
\leq\Cc[ConstEmbeddingXOne1]{C} \norm{\uvel}_{\XreyOne}.
\end{align}
If $\frac{2q}{2-q}\leq s <\frac{3q}{3-2q}$, $\frac{4q}{4-q}\leq t <\frac{3q}{3-q}$,  
$\theta_s\coloneqq 2+\frac{3}{s}-\frac{3}{q}$ and
$\theta_t\coloneqq 1+\frac{3}{t}-\frac{3}{q}$,
then
\begin{align}\label{PropEmbeddingsX1_Embedding2}
\snorm{\Rey}^{\theta_s}\norm{\uvel}_{s}
+\snorm{\Rey}^{\theta_t}\norm{\grad\uvel}_{t}
\leq\const{ConstEmbeddingXOne1} \norm{\uvel}_{\XreyOne}.
\end{align}
Here $\const{ConstEmbeddingXOne1}=\const{ConstEmbeddingXOne1}(q,r,s,t,\Omega)>0.$ 
\end{prop}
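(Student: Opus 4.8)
The plan is to estimate the interior and exterior pieces of $\uvel$ separately, the only nontrivial work being on the exterior domain $\Omega^\outer$. On the bounded smooth domain $\Omega^\inner$ we have $\uvel^\inner\in\WSR{2}{r}(\Omega^\inner)$ with $r>3$, so the classical Sobolev embedding gives $\uvel^\inner\in\CR{1}(\overline{\Omega^\inner})$ and hence $\uvel^\inner\in\LR{s}(\Omega^\inner)$, $\grad\uvel^\inner\in\LR{t}(\Omega^\inner)$ for \emph{every} $s,t\in[1,\infty]$, with $\norm{\uvel^\inner}_{s,\Omega^\inner}+\norm{\grad\uvel^\inner}_{t,\Omega^\inner}\le C\norm{\uvel}_{\XreyOne}$. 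This contribution carries no power of $\Rey$ (or is dominated by the $\Rey$-weighted interior part of $\norm{\cdot}_{\XreyOne}$) and is simply absorbed into the right-hand sides of \eqref{PropEmbeddingsX1_Embedding1}--\eqref{PropEmbeddingsX1_Embedding2}. Since $\LR{s}(\Omega)=\LR{s}(\Omega^\inner)\oplus\LR{s}(\Omega^\outer)$ on $\Omega=\R^3\setminus\Gamma$, and likewise for $\grad\uvel$, it then suffices to treat $\uvel^\outer$.

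On $\Omega^\outer$ the engine is the scaling-invariant Gagliardo--Nirenberg inequality together with the homogeneous Sobolev inequalities for exterior domains in \cite[Chapter~II]{GaldiBookNew}. From $\grad^2\uvel^\outer\in\LR{q}(\Omega^\outer)$ with $1<q<\tfrac32$ the latter give $\grad\uvel^\outer\in\LR{3q/(3-q)}(\Omega^\outer)$ and $\uvel^\outer\in\LR{3q/(3-2q)}(\Omega^\outer)$ \emph{modulo an additive affine function}, and the a priori memberships $\grad\uvel^\outer\in\LR{4q/(4-q)}(\Omega^\outer)$, $\uvel^\outer\in\LR{2q/(2-q)}(\Omega^\outer)$ force this affine function to vanish; moreover $\grad^2\uvel^\outer\in\LR{r}(\Omega^\outer)$ with $r>3$ upgrades these to $\grad\uvel^\outer\in\LR{\infty}(\Omega^\outer)$ and $\uvel^\outer\in\LR{\infty}(\Omega^\outer)$. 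In every case the resulting bounds are by $\norm{\grad^2\uvel^\outer}_{q}+\norm{\grad^2\uvel^\outer}_{r}$ plus the decay terms, that is by $C\norm{\uvel}_{\XreyOne}$ with no power of $\Rey$. Log-convexity of the Lebesgue norms now yields $\uvel^\outer\in\LR{s}(\Omega^\outer)$ for all $s\in[\tfrac{2q}{2-q},\infty]$ and $\grad\uvel^\outer\in\LR{t}(\Omega^\outer)$ for all $t\in[\tfrac{4q}{4-q},\infty]$, which together with the first paragraph proves the membership $\uvel\in\LR{s}(\Omega)\cap\WSRhom{1}{t}(\Omega)$, as well as \eqref{PropEmbeddingsX1_Embedding1} in the large-exponent range $s\ge\tfrac{3q}{3-2q}$, $t\ge\tfrac{3q}{3-q}$ (interpolate $\uvel^\outer$ between $\LR{3q/(3-2q)}$ and $\LR{\infty}$, and $\grad\uvel^\outer$ between $\LR{3q/(3-q)}$ and $\LR{\infty}$, and add the interior contribution).

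For the small-exponent range one interpolates instead against the \emph{weighted} endpoints built into $\oseennorm{\cdot}{\Rey}$. Writing $\tfrac1s=(1-a)\tfrac{2-q}{2q}+a\tfrac{3-2q}{3q}$ and $\tfrac1t=(1-a')\tfrac{4-q}{4q}+a'\tfrac{3-q}{3q}$, an elementary computation shows that the exponents prescribed in the statement satisfy $\theta_s=\tfrac{1-a}{2}$ and $\theta_t=\tfrac{1-a'}{4}$, so that
\[
\snorm{\Rey}^{\theta_s}\norm{\uvel^\outer}_{s}\le\bp{\snorm{\Rey}^{1/2}\norm{\uvel^\outer}_{2q/(2-q)}}^{1-a}\,\norm{\uvel^\outer}_{3q/(3-2q)}^{a}\le C\norm{\uvel}_{\XreyOne},
\]
since $\snorm{\Rey}^{1/2}\norm{\uvel^\outer}_{2q/(2-q)}$ is a summand of $\oseennorm{\uvel^\outer}{\Rey}$ and $\norm{\uvel^\outer}_{3q/(3-2q)}$ was bounded above; the analogous estimate for $\grad\uvel^\outer$, interpolating between $\snorm{\Rey}^{1/4}\norm{\grad\uvel^\outer}_{4q/(4-q)}$ and $\norm{\grad\uvel^\outer}_{3q/(3-q)}$, gives the $\theta_t$-part. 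Adding the interior contribution yields \eqref{PropEmbeddingsX1_Embedding2}.

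The crux of the statement --- and the step needing the most care --- is precisely this $\Rey$-bookkeeping: one must check that the prescribed $\theta_s,\theta_t$ are \emph{exactly} the interpolation exponents connecting the weights $\snorm{\Rey}^{1/2}$, $\snorm{\Rey}^{1/4}$ present in $\oseennorm{\cdot}{\Rey}$ at $\tfrac{2q}{2-q}$, $\tfrac{4q}{4-q}$ to the weight $\snorm{\Rey}^{0}$ at the Sobolev-critical exponents $\tfrac{3q}{3-2q}$, $\tfrac{3q}{3-q}$; this scaling relation is what forces the precise form of $\theta_s$ and $\theta_t$. A secondary point requiring attention is the correct use of the exterior-domain homogeneous Sobolev inequalities --- in particular the identification of the additive polynomial from the a priori integrability of $\uvel^\outer$ and $\grad\uvel^\outer$ --- and verifying that all constants depend only on $q,r,s,t,\Omega$ and not on $\Rey$.
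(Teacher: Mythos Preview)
Your proof is correct and follows essentially the same approach as the paper: Sobolev embedding on the bounded interior piece, homogeneous Sobolev inequalities on the exterior piece (the paper invokes \cite[Lemma~II.6.1]{GaldiBookNew} specifically for the $\LR{\infty}$ bound on $\grad\uvel^\outer$), and then interpolation between the Sobolev-critical exponent and the $\Rey$-weighted endpoint to produce the powers $\theta_s,\theta_t$. Your explicit verification that $\theta_s=\tfrac{1-a}{2}$ and $\theta_t=\tfrac{1-a'}{4}$ is a bit more detailed than the paper's sketch, which simply states that \eqref{PropEmbeddingsX1_Embedding2} follows by interpolating the Sobolev inequality with the trivial bound $\snorm{\Rey}^{1/4}\norm{\grad\uvel^\outer}_{4q/(4-q)}\le\norm{\uvel}_{\XreyOne}$.
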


\begin{proof}
The above estimates for the part $\uvel^\inner$ of $\uvel$ defined on a bounded domain follows directly 
from Sobolev embedding theorems.
Concerning the part $\uvel^\outer$ defined on an exterior domain, it follows from \cite[Lemma II.6.1]{GaldiBookNew} and the Sobolev inequality that
\begin{align*}
\norm{\grad\uvel^\outer}_\infty 
\leq\Cc{c}\bp{\snorm{\grad\uvel^\outer}_{1,r}+\norm{\grad\uvel^\outer}_{\frac{3q}{3-q}}}
\leq\Cc{c}\bp{\snorm{\uvel^\outer}_{2,r}+\snorm{\uvel^\outer}_{2,q}}
\leq\Cc{c}\norm{\uvel}_{\XreyOne}.
\end{align*}
Interpolation with the Sobolev-type inequality
\begin{align}\label{EstPropEmbeddingsX1SobolevIneq}
\norm{\grad\uvel}_{\frac{3q}{3-q}}
\leq\Cc{c}\norm{\grad^2\uvel}_{q}
\leq\Cc{c}\norm{\uvel}_{\XreyOne}
\end{align}
yields estimate \eqref{PropEmbeddingsX1_Embedding1} of $\grad\uvel$. 
Estimate \eqref{PropEmbeddingsX1_Embedding2} of $\grad\uvel$ follows by interpolating \eqref{EstPropEmbeddingsX1SobolevIneq}
with the trivial estimate
$\snorm{\Rey}^\frac{1}{4}\norm{\grad\uvel^\outer}_{\frac{4q}{4-q}}
\leq\norm{\uvel}_{\XreyOne}$.
The estimates \eqref{PropEmbeddingsX1_Embedding1}--\eqref{PropEmbeddingsX1_Embedding2} of $\uvel$ can be verified in a similar manner.
\end{proof}

\section{Auxiliary linear problem}\label{AuxiliaryLinearProblemSection}

Let $\Omega$ be a domain of the same type as in Section \ref{Preliminaries_Section}, \ie,
satisfying \eqref{Preliminaries_DomainTypeDef}. We further assume that the boundary $\Gamma$ is at least Lipschitz.
The linear system
\begin{align}\label{SystemOseenOmegaGeneral}
\begin{pdeq}
-\Div \fluidstress(\uvel,\upres) + \Rey \partial_3 \uvel&=f && \text{in } \Omega , \\
\Div \uvel &= g && \text{in } \Omega, \\
\jump{\uvel}&=0  &&\text{on } \Gamma, \\
\uvel \cdot \nvec &=\hone &&\text{on } \Gamma,\\
\projtang \jump{\fluidstress(\uvel,\upres)\nvec}&=\htwo && \text{on }\Gamma
\end{pdeq}
\end{align}
is an integral part of the linearization of \eqref{derivation_sseq_final}.
It is a two-phase strongly coupled Oseen $(\Rey\neq 0)$ or Stokes $(\Rey= 0)$ system. Since the coupling is strong, the questions of existence and  uniqueness of solutions as well as \textit{a priori} estimates hereof cannot be investigated by means of a simple decomposition into two classical Oseen/Stokes problems. In the following, we carry out an analysis of \eqref{SystemOseenOmegaGeneral} in the framework of the Sobolev spaces introduced in the previous section. Existence and uniqueness of solutions is
first shown in a setting of weak solutions, and strong \textit{a priori} estimates of Agmon--Douglis--Nirenberg type are subsequently established
first in the half space and then in the general case via a localization technique.
The main result of the section is contained in Theorem \ref{ThmOseenOmegaHom} and Theorem \ref{ThmStokesProblemOmega}.

\subsection{Weak solutions}\label{WeakSolutions}

We introduce a weak formulation of \eqref{SystemOseenOmegaGeneral} in the setting of the function spaces:
\begin{align*}
&\testsp \coloneqq \setcl{\varphi\in\CRci(\R^3)^3}{\varphi\cdot\nvec=0 \text{ on }\Gamma },\\
&\testsolsp\coloneqq \setcl{\phi\in\CRci(\R^3)^3}{\varphi\cdot\nvec=0\ \ton\Gamma,\ \Div\phi=0},\\
&\hsp\coloneqq  \overline{\testsp}^{\snorm{\cdot}_{1,2}} = 
\setcl{\varphi\in\DSRN{1}{2}\np{\R^3}^3}{\varphi\cdot\nvec=0 \text{ on }\Gamma },\\
&\hsolsp\coloneqq  \overline{\testsolsp}^{\snorm{\cdot}_{1,2}} = 
\setcl{\varphi\in\DSRN{1}{2}\np{\R^3}^3}{\varphi\cdot\nvec=0 \text{ on }\Gamma,\ \Div\phi=0 },\\
&\LRN{2}\np{\R^3}\coloneqq \setcL{p\in\LR{2}\np{\R^3}}{\int_{\Omega^\inner}p\,\dx = 0}.
\end{align*}

In the following, we establish existence and uniqueness as well as higher-order regularity of weak solutions to \eqref{SystemOseenOmegaGeneral} in this framework.
We start with the definition of a weak solution:

\begin{defn}\label{DefWeakSolution}
Let $f\in\hsolspdual$, 
$g\in\LR{2}(\R^3)$, 
$\hone\in\WSR{\half}{2}(\Gamma)$ and
$\htwo\in\WSR{-\half}{2}(\Gamma)^3$.
A vector field $\uvel\in\DSRN{1}{2}\np{\R^3}^3$ is called a 
\emph{weak solution to \eqref{SystemOseenOmegaGeneral}} 
if
\begin{align}\label{EqWeakSolutionDef}
\forall \phi\in\testsolsp:\quad
\int_{\R^3} 2\mu\,\symmgrad(\uvel):\symmgrad(\varphi)\,\dx
+\Rey\int_{\R^3}\partial_3\uvel\cdot\varphi
= \dpair{f}{\varphi}
+\dpair{\htwo}{\varphi}
\end{align}
as well as $\Div\uvel=g$ in $\Omega$ and $\uvel\cdot\nvec=\hone$ on $\Gamma$. 
\end{defn}

Existence of a weak solution $\uvel$ can be shown by standard techniques; we sketch a proof below. 

\begin{thm}\label{ThmOseenWeakSolutions}
Assume that the boundary $\Gamma$ is Lipschitz.
For every $f\in\hsolspdual$, 
$g\in\LR{2}\np{\R^3}$, 
$\hone\in\WSR{\half}{2}\np{\Gamma}$ and
$\htwo\in\WSR{-\half}{2}\np{\Gamma}^3$ 
satisfying
\begin{align}\label{ThmOseenWeakSolutions_CompCond}
\int_{\Omega^\inner}g\,\dx=\int_{\Gamma}\hone\,\dS
\end{align}
there is a weak solution $\uvel\in\DSRN{1}{2}\np{\R^3}^3$ to \eqref{SystemOseenOmegaGeneral}
satisfying
\begin{align}\label{EstimateWeakSolution}
\snorm{\uvel}_{1,2}
\leq\Cc{C} \bp{
\norm{f}_{\hsolspdual}
+\norm{g}_{2}
+\norm{\hone}_{\half,2}
+\norm{\htwo}_{-\half,2}},
\end{align}
where $\Cclast{C}=\Cclast{C}\np{\Gamma,\Rey}$.
\end{thm}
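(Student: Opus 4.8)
The plan is to reduce \eqref{SystemOseenOmegaGeneral} to a problem with vanishing divergence and vanishing normal trace on $\Gamma$ by subtracting an explicit solenoidal lifting, and then to produce the reduced solution in the Hilbert space $\hsolsp$ by a Galerkin scheme; coercivity will come from Korn's second inequality on $\R^3$, and the a priori bound from the skew-symmetry of the Oseen term.

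\emph{Lifting.} First I would extend $\hone$ to a compactly supported field $\Psi\in\WSR{1}{2}(\R^3)^3$ with $\Psi\cdot\nvec=\hone$ on $\Gamma$ and set $\tq\coloneqq g-\Div\Psi\in\LR{2}(\R^3)$; the compatibility condition \eqref{ThmOseenWeakSolutions_CompCond} together with the divergence theorem then gives $\int_{\Omega^\inner}\tq\,\dx=0$. Solving the divergence problem separately on the bounded Lipschitz domain $\Omega^\inner$ (via the Bogovskii operator, which exploits the vanishing mean value of $\tq$ there) and on the exterior Lipschitz domain $\Omega^\outer$ (see \cite[Section III.3]{GaldiBookNew}), both with zero trace on $\Gamma$, and gluing the two correctors, each extended by zero, yields $V\in\DSRN{1}{2}(\R^3)^3$ with $\jump{V}=0$ and $\Div V=\tq$ in $\R^3\setminus\Gamma$. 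Then $\Vvel\coloneqq\Psi+V\in\DSRN{1}{2}(\R^3)^3$ satisfies $\jump{\Vvel}=0$, $\Div\Vvel=g$ in $\R^3\setminus\Gamma$, $\Vvel\cdot\nvec=\hone$ on $\Gamma$, and $\snorm{\Vvel}_{1,2}\leq C(\Gamma)\bp{\norm{g}_{2}+\norm{\hone}_{\half,2}}$.

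\emph{Galerkin scheme.} With $\uvel=\Vvel+\wvel$, one is left to find $\wvel\in\hsolsp$ solving
\begin{align*}
\int_{\R^3}2\mu\,\symmgrad(\wvel):\symmgrad(\phi)\,\dx+\Rey\int_{\R^3}\partial_3\wvel\cdot\phi\,\dx=\dpair{f}{\phi}+\dpair{\htwo}{\phi}-\int_{\R^3}2\mu\,\symmgrad(\Vvel):\symmgrad(\phi)\,\dx-\Rey\int_{\R^3}\partial_3\Vvel\cdot\phi\,\dx
\end{align*}
for all $\phi\in\testsolsp$. Since $\hsolsp=\overline{\testsolsp}^{\snorm{\cdot}_{1,2}}$, fix a sequence $\np{\phi_k}\subset\testsolsp$ dense in $\testsolsp$ for the $\WSR{1}{2}(\R^3)$-norm, and for each $N$ solve the finite-dimensional Galerkin problem on $\vecspan\set{\phi_1,\dots,\phi_N}$. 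Because every element of the span lies in $\CRci(\R^3)^3$, the transport term vanishes on the diagonal, $\int_{\R^3}\partial_3\wvel_N\cdot\wvel_N=\thalf\int_{\R^3}\partial_3\snorm{\wvel_N}^2=0$, so testing with $\wvel_N$ and invoking Korn's inequality shows the Galerkin problem is uniquely solvable; estimating the right-hand side (trace theorem for the $\htwo$-term, the embedding $\DSRN{1}{2}(\R^3)\embeds\LR{6}(\R^3)$ and the lifting bound for the $\Vvel$-terms) then yields a bound on $\snorm{\wvel_N}_{1,2}$ uniform in $N$ by $C(\Gamma,\Rey)\bp{\norm{f}_{\hsolspdual}+\norm{g}_{2}+\norm{\hone}_{\half,2}+\norm{\htwo}_{-\half,2}}$.

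\emph{Passage to the limit and the main obstacle.} By reflexivity a subsequence of $\np{\wvel_N}$ converges weakly in $\hsolsp$ to some $\wvel$ obeying the same bound; since $\partial_3\wvel_N\wra\partial_3\wvel$ in $\LR{2}(\R^3)^3$ and each $\phi_j$ is smooth with compact support, every term of the Galerkin identity passes to the limit, so $\wvel$ solves the reduced equation against each $\phi_j$, hence, all terms being continuous in the test function for the $\WSR{1}{2}(\R^3)$-norm and $\np{\phi_k}$ being dense there, against all $\phi\in\testsolsp$. Then $\uvel\coloneqq\Vvel+\wvel$ is a weak solution in the sense of Definition~\ref{DefWeakSolution}, and combining the bounds on $\wvel$ and $\Vvel$ gives \eqref{EstimateWeakSolution}. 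I expect the genuinely delicate point to be the interaction of the Oseen term with the lifting: the integral $\Rey\int_{\R^3}\partial_3\Vvel\cdot\wvel_N$ has to be controlled \emph{uniformly} by $\snorm{\wvel_N}_{1,2}$, which, since $\Vvel$ need only lie in $\LR{6}(\R^3)$ and $g$ only in $\LR{2}(\R^3)$, calls for a refinement (for instance choosing the lifting so that $\partial_3\Vvel$ gains integrability, or inserting a cut-off near spatial infinity), whereas the Korn-based energy estimate and the gluing across $\Gamma$ are routine.
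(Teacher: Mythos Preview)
Your proposal is correct and follows essentially the same route as the paper: a lifting that absorbs $g$ and $\hone$ combined with a Galerkin approximation in $\hsolsp$, with the a~priori bound coming from the skew-symmetry of the Oseen term and Korn's inequality. The paper merely reverses the order of the two steps (first Galerkin for the homogeneous case $(g,\hone)=(0,0)$, then the lifting $u=v+u_1+u_2$ with $u_1\in\WSR{1}{2}(\R^3)$ a trace extension and $u_2\in\DSRN{1}{2}(\R^3)$ a Bogovski\u{\i}-type corrector), but the content is the same.

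Your closing remark about the interaction of the Oseen term with the lifting is well taken: the functional $\phi\mapsto\Rey\int_{\R^3}\partial_3\Vvel\cdot\phi$ is indeed the only term whose $\hsolsp'$-bound is not immediate from $\Vvel\in\DSRN{1}{2}(\R^3)$ alone, and the paper also passes over this point without comment. The natural fix you hint at---arranging the lifting so that it lies in $\LR{2}(\R^3)$, for instance by taking the trace extension $u_1$ with compact support (which the paper in fact does, since $u_1\in\WSR{1}{2}(\R^3)$) and handling the exterior part of the divergence corrector with enough care---is exactly what is needed; once $\Vvel\in\LR{2}$, an integration by parts gives $\snorml{\int\partial_3\Vvel\cdot\phi}\leq\norm{\Vvel}_2\snorm{\phi}_{1,2}$ and the estimate closes with the claimed dependence $C=C(\Gamma,\Rey)$.
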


\begin{proof}
We sketch a proof of existence following \cite[Proof of Theorem VII.2.1]{GaldiBookNew} based on a Galerkin approximation.
To this end, a Schauder basis $\seqkN{\phi}\subset\testsolsp$ for the function space
$\setc{\phi\in\WSR{1}{2}\np{\R^3}}{\varphi\cdot\nvec=0\ \ton\Gamma,\ \Div\phi=0}$
satisfying $\int_{\Omega} 2\mu\,\symmgrad(\phi_k):\symmgrad(\phi_l)\,\dx = \kroneckerdelta_{k,l}$ is constructed.
This function space is clearly separable, whence such a basis can be constructed via a Gram--Schmidt procedure.
We consider first the
case $(g,\hone)=(0,0)$. Existence of an approximate solution of order $m\in\N$, that is, a vector field $\uvel_m\coloneqq \sum_{l=1}^m \xi_l\phi_l$ satisfying
the equation in \eqref{EqWeakSolutionDef} for all test functions in $\vecspan\set{\phi_1,\ldots,\phi_m}$, then follows directly from the fact that
the matrix $A\in\R^{m\times m}$, $A_{kl}\coloneqq \int_{\R^3}\partial_3\phi_l\cdot \phi_k$, is skew symmetric and $I+\Rey A$ therefore invertible. Specifically, 
the coefficient vector $\xi\coloneqq \np{I+\Rey A}^{-1}F$ with $F_k\coloneqq \dpair{f}{\phi_k}+\dpair{\htwo}{\phi_k}$ induces an approximate solution $\uvel_m$. Employing $\uvel_m$ itself
as a test function in the weak formulation, one obtains a uniform bound on $\norm{\symmgrad\np{\uvel_m}}_2$, which, since $\uvel_m$ is solenoidal, also implies a uniform bound as in \eqref{EstimateWeakSolution} on $\snorm{\uvel_m}_{1,2}$. A weak solution to \eqref{SystemOseenOmegaGeneral} is now obtained as the limit $\uvel\coloneqq \lim_{m\ra\infty}\uvel_m$ in $\hsolsp$.
The general case of non-vanishing $g$ and $\hone$ follows by a lifting argument. Employing a right inverse of the trace operator $\WSR{1}{2}\np{\R^3}\ra\WSR{\half}{2}\np{\Gamma}$, we find $\uvel_1\in\WSR{1}{2}\np{\R^3}$
with $\uvel_1=\hone$ on $\Gamma$ satisfying $\norm{\uvel_1}_{1,2}\leq \Cc{c}\norm{\hone}_{\half,2}$. The compatibility condition \eqref{ThmOseenWeakSolutions_CompCond} ensures
that $\int_{\Omega^\inner}g-\Div\uvel_1\,\dx=0$ so that we can find $\uvel_2\in\WSRhomN{1}{2}(\R^3)$ with 
$\Div \uvel_2 = g - \Div\uvel_1$ and satisfying $\uvel_2=0$ on $\Gamma$ as well as \eqref{EstimateWeakSolution}; see for example \cite[Theorem III.3.1 and III.3.6]{GaldiBookNew}. The ansatz $\uvel=\vvel+\uvel_1+\uvel_2$ now reduces
the problem to the case above with respect to the unknown $\vvel$. We thus conclude existence of a weak solution.
\end{proof}

A pressure $\upres$ can be associated to a weak solution $\uvel$ such that $(\uvel,\upres)$ becomes a solution to \eqref{SystemOseenOmegaGeneral} in the sense of distributions:
\begin{thm}\label{PropAssPressure}
Assume $f\in\hspdual$. To every weak solution $\uvel\in\DSRN{1}{2}\np{\R^3}$ to \eqref{SystemOseenOmegaGeneral} 
there is a unique $\upres\in\LRN{2}\np{\R^3}$ such that 
\begin{align}\label{EqAssPressure}
\forall\phi\in\testsp:\ \int_{\R^3} 2\mu\,\symmgrad(\uvel):\symmgrad(\phi)\,\dx
+\Rey\int_{\R^3}\partial_3\uvel\cdot\phi\,\dx
=\int_{\R^3}\upres\,\Div\phi\,\dx
+\dpair{f}{\phi}
+\dpair{\htwo}{\phi}
\end{align}
and 
\begin{align}\label{EstAssPressure}
\norm{\upres}_2\leq \Cc[ConstAssPressure]{C}
\bp{\norm{f}_{\hspdual}+\norm{g}_{2}
+\norm{\hone}_{\half,2}+\norm{\htwo}_{-\half,2}}
\end{align}
with $\const{ConstAssPressure}=\const{ConstAssPressure}(\Gamma)>0$. 
\end{thm}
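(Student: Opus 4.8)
The plan is to recover $\upres$ as the Lagrange multiplier associated with the incompressibility constraint, following the classical pressure‑recovery argument for the (two‑phase) Stokes/Oseen problem; see \cite[Chapter~VII]{GaldiBookNew} for the whole‑space and exterior‑domain situations. First I would introduce the linear functional
\begin{align*}
\calf\colon\testsp\to\R,\qquad
\calf(\phi)\coloneqq\int_{\R^3}2\mu\,\symmgrad(\uvel):\symmgrad(\phi)\,\dx
+\Rey\int_{\R^3}\partial_3\uvel\cdot\phi\,\dx
-\dpair{f}{\phi}-\dpair{\htwo}{\phi},
\end{align*}
so that the asserted identity \eqref{EqAssPressure} is nothing but $\calf(\phi)=\int_{\R^3}\upres\,\Div\phi\,\dx$ for all $\phi\in\testsp$. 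By Definition~\ref{DefWeakSolution}, the weak solution $\uvel$ satisfies $\calf(\phi)=0$ for every $\phi\in\testsolsp$. Since $\uvel\in\DSRN{1}{2}(\R^3)^3\hookrightarrow\LR{6}(\R^3)^3$ with $\snorm{\uvel}_{1,2}<\infty$, since the trace operator maps $\hsp$ boundedly into $\WSR{\half}{2}(\Gamma)^3$, and since $f\in\hspdual$, the functional $\calf$ extends to a bounded functional on $\hsp$; the Oseen term $\Rey\int_{\R^3}\partial_3\uvel\cdot\phi$ requires here the by‑now standard whole‑space treatment (integration by parts together with $\uvel\in\LR{6}$), as in \cite[Chapter~VII]{GaldiBookNew}. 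By density the extension still vanishes on $\hsolsp$.

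The core of the proof is a de~Rham‑type representation of $\calf$. Consider the divergence operator $\Div\colon\hsp\to\LRN{2}(\R^3)$, which is well defined because $\int_{\Omega^\inner}\Div\phi\,\dx=\int_\Gamma\phi\cdot\nvec\,\dS=0$ for $\phi\in\hsp$, and whose kernel is exactly $\hsolsp$. I would establish that $\Div$ is onto $\LRN{2}(\R^3)$, equivalently that the inf--sup inequality
\begin{align*}
\norm{g}_{2}\leq\Cc{c}\,\sup_{0\neq\phi\in\hsp}\frac{\labs{\int_{\R^3}g\,\Div\phi\,\dx}}{\snorm{\phi}_{1,2}}
\qquad\text{for all }g\in\LRN{2}(\R^3)
\end{align*}
holds. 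This is done by constructing a bounded right inverse $\bogopr$ of $\Div$ on $\LRN{2}(\R^3)$ compatible with the two‑phase geometry \eqref{Preliminaries_DomainTypeDef} and with the constraint $\phi\cdot\nvec=0$ on $\Gamma$: on the bounded domain $\Omega^\inner$ one uses a Bogovskii operator (admissible since the interior integral of $g$ vanishes), on the exterior domain $\Omega^\outer$ a solution of the divergence problem with controlled decay, the two being matched so that the glued field lies in $\hsp$, i.e.\ is continuous across $\Gamma$ with vanishing normal trace there; cf.\ \cite[Chapter~III]{GaldiBookNew}. Granting this, $\Div(\hsp)=\LRN{2}(\R^3)$ is closed, so $\calf$---vanishing on $\ker\Div=\hsolsp$---descends to a bounded functional on the quotient $\hsp/\hsolsp$, which $\Div$ identifies isomorphically with $\LRN{2}(\R^3)$. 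Since $\LRN{2}(\R^3)$ is a Hilbert space, the Riesz representation theorem provides a unique $\upres\in\LRN{2}(\R^3)$ with $\calf(\phi)=\int_{\R^3}\upres\,\Div\phi\,\dx$ for all $\phi\in\hsp$, in particular for all $\phi\in\testsp$. This is \eqref{EqAssPressure}, and uniqueness of $\upres$ in $\LRN{2}(\R^3)$ is built into the Riesz step.

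For the bound \eqref{EstAssPressure} I would combine the Riesz identity $\norm{\upres}_2=\norm{\calf}_{(\LRN{2}(\R^3))'}$ with the inf--sup inequality, which yields $\norm{\upres}_2\leq\Cc{c}\bp{\snorm{\uvel}_{1,2}+\norm{f}_{\hspdual}+\norm{\htwo}_{-\half,2}}$. It then suffices to insert the a~priori estimate \eqref{EstimateWeakSolution} of Theorem~\ref{ThmOseenWeakSolutions} for $\uvel$---which applies here, the weak solution being unique because the Dirichlet‑type form $\phi\mapsto\int_{\R^3}2\mu\,\symmgrad(\phi):\symmgrad(\phi)\,\dx$ is coercive on $\hsolsp$ by Korn's inequality and the Oseen term is skew‑symmetric---together with $\norm{f}_{\hsolspdual}\leq\norm{f}_{\hspdual}$, to arrive at \eqref{EstAssPressure}.

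The step I expect to be the main obstacle is the surjectivity/inf--sup statement for $\Div$ on $\hsp$: the Bogovskii construction on the bounded component $\Omega^\inner$ is classical, but combining it with a divergence‑problem solution on the unbounded component $\Omega^\outer$ so that the resulting vector field genuinely lies in $\hsp$ (in particular has zero normal trace across $\Gamma$) is the delicate point, and is precisely where the two‑phase structure \eqref{Preliminaries_DomainTypeDef} enters. A lesser but genuine technical nuisance is pinning down the correct interpretation of the Oseen term $\Rey\int_{\R^3}\partial_3\uvel\cdot\phi$ as a bounded functional on $\hsp$, which is handled as in the whole‑space Oseen theory of \cite[Chapter~VII]{GaldiBookNew}.
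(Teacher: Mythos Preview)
Your overall strategy---recover the pressure as a Lagrange multiplier via surjectivity of $\Div\colon\hsp\to\LRN{2}(\R^3)$ and a closed-range/Riesz argument---is sound, and your construction of a right inverse for $\Div$ by gluing a Bogovski\u{\i} field on $\Omega^\inner$ with an exterior-domain solution on $\Omega^\outer$ (both vanishing on $\Gamma$) is correct. For $\Rey=0$ your argument goes through and is essentially the paper's own.

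The genuine gap is your claim that the functional $\calf$ extends boundedly to all of $\hsp$ when $\Rey\neq 0$. It does not: for $\uvel\in\DSRN{1}{2}(\R^3)$ one only has $\partial_3\uvel\in\LR{2}$ and $\uvel\in\LR{6}$, while $\phi\in\hsp$ gives $\phi\in\LR{6}$ and $\grad\phi\in\LR{2}$. Neither the pairing $\int\partial_3\uvel\cdot\phi$ (an $\LR{2}$--$\LR{6}$ product) nor its integrated-by-parts version $-\int\uvel\cdot\partial_3\phi$ (an $\LR{6}$--$\LR{2}$ product) is controlled on $\R^3$, since $\tfrac{1}{2}+\tfrac{1}{6}=\tfrac{2}{3}<1$. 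The ``standard whole-space treatment'' you invoke does not rescue this; indeed, in \cite[Lemma~VII.1.1]{GaldiBookNew} exactly this obstruction forces a localization. The paper therefore first restricts to $\hspm=\setc{\phi\in\hsp}{\supp\phi\subset\ball_M}$, on which the Oseen term \emph{is} bounded (Poincar\'e on $\ball_M$ gives $\phi\in\LR{2}(\ball_M)$), produces a local pressure $\upres_M\in\LR{2}(\ball_M)$, patches the family $\{\upres_M\}$ into a global $\upres\in\LRloc{2}(\R^3)$, and only then upgrades to $\upres\in\LR{2}(\R^3)$---directly for $\Rey=0$, and via the exterior-domain Oseen theory \cite[Theorem~VII.7.2]{GaldiBookNew} for $\Rey\neq 0$.

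A secondary issue: your route to \eqref{EstAssPressure} invokes the estimate \eqref{EstimateWeakSolution} for the \emph{given} weak solution $\uvel$ via uniqueness, arguing that the Oseen term is skew-symmetric. But testing with $\uvel$ itself runs into the same integrability obstruction as above; the formal computation $\int\partial_3\uvel\cdot\uvel=0$ is not a priori justified for $\uvel\in\DSRN{1}{2}(\R^3)$.
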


\begin{proof}
The proof is modification of \cite[Lemma VII.1.1]{GaldiBookNew}. For arbitrary $M\in\N$ with $M>\delta(\Omega)$ we let 
$\hspm\coloneqq \setc{\phi\in\hsp}{\supp\phi\subset\B_M}$ and consider the functional
\begin{align*}
F_M:\hspm\ra\R,\
F_M(\phi)\coloneqq \int_{\B_M}2\mu\, \symmgrad(\uvel):\symmgrad(\phi)\,\dx
+\Rey\int_{\B_M}\partial_3\uvel\cdot\phi\,\dx
-\dpair{f}{\phi}
-\dpair{\htwo}{\phi},
\end{align*}
which is continuous on $\hspm$ by Sobolev embedding. We further introduce the space
\begin{align*}
\LRNm{2}\coloneqq \setcL{p\in\LR{2}\np{\B_M}}{\int_{\Omega^\outer\cap\B_M} p\,\dx=\int_{\Omega^\inner} p\,\dx=0}
\end{align*}
and the operator $\Div:\hspm\ra\LRNm{2}$. The operator is surjective, which is seen by solving for arbitrary
$p\in\LRNm{2}$ the two equations
\begin{align*}
\begin{pdeq}
\Div\uvel^\inner &= p && \tin\Omega^\inner,\\
\uvel^\inner &= 0 && \ton\Gamma,
\end{pdeq}\qquad
\begin{pdeq}
\Div\uvel^\outer &= p && \tin\Omega^\outer\cap\B_M,\\
\uvel^\outer &= 0 && \ton\Gamma\cup\partial\B_M,
\end{pdeq}
\end{align*}
according to \cite[Theorem III.3.1]{GaldiBookNew}. It follows that the operator and hence also its adjoint $\DivAdjoint$ are both closed.
Since $\uvel$ is a weak solution, $F_M$ vanishes on the kernel of $\Div$ and consequently belongs to the image of $\DivAdjoint$. We thus obtain
a function $\upres_M\in\LRNm{2}$ such that $\dpair{F_M}{\phi}\coloneqq\int_{\B_R}\upres_M \Div \phi\,\dx$.
After possibly adding a constant to $\upres_{M+1}^\outer$, we may assume  
$\upres_{M+1}=\upres_M$ in $\B_M$. The sequence $\set{\upres_M}_{M=1}^\infty$ then induces a pressure $\upres\in\LRloc{2}\np{\R^3}$ satisfying \eqref{EqAssPressure} and
$\int_{\Omega^\inner} \upres\,\dx=0$.
It remains to establish $\LR{2}\np{\R^3}$ integrability of $\upres$. If $\Rey=0$, the functional $F_M$ remains continuous if $\hspm$ is replaced with $\hsp$. In this case the argument above directly yields a pressure $\upres\in\LRN{2}\np\R^3$ satisfying \eqref{EqAssPressure}. Subsequently choosing a function $\phi\in\hsp$ with $\Div\phi=\upres$ in $\R^3$ and $\snorm{\phi}_{1,2}\leq \Cc{c}\norm{\upres}_2$,
which can be done via \cite[Theorem III.3.1 and Theorem III.3.6]{GaldiBookNew}, one obtains
\eqref{EstAssPressure} by inserting $\phi$ into \eqref{EqAssPressure}. 
If $\Rey\neq 0$, it suffices to observe that $(\uvel,\upres)$ is a weak solution to an Oseen problem in the exterior domain
$\Omega^\outer$, whence \cite[Theorem VII.7.2]{GaldiBookNew} yields $\upres\in\LRN{2}\np\R^3$ satisfying \eqref{EqAssPressure}--\eqref{EstAssPressure}.
\end{proof}

Provided $\uvel$ and $\upres$ are sufficiently regular, integration by parts in \eqref{EqAssPressure} reveals that $\np{\uvel,\upres}$ is a classical solution to \eqref{SystemOseenOmegaGeneral}.
Higher-order regularity of $(\uvel,\upres)$ can be obtained via a classical approach  under appropriate regularity assumptions on the data: 

\begin{thm}\label{ThmHigherRegularityWeakSolutions}
Let $k\in\N_0$ and assume that $\Gamma$ is a $\CR{k+3}$-smooth closed surface. If  
\begin{align*}
f\in\WSR{k}{2}(\Omega)^3, \quad
g\in\WSR{k+1}{2}(\Omega), \quad
\hone\in\WSR{k+3/2}{2}(\Gamma), \quad
\htwo\in\WSR{k+1/2}{2}(\Gamma)^3,
\end{align*}
then a weak solution $\uvel\in\DSRN{1}{2}\np{\R^3}^3$ to \eqref{SystemOseenOmegaGeneral} with associated pressure  
$\upres\in\LRloc{2}(\R^3)$ satisfying \eqref{EqAssPressure} also satisfies 
\begin{align}\label{ThmHigherRegularityWeakSolutions_RegularityAssertion}
\uvel\in\bigcap_{\ell=0}^k\WSRhom{\ell+2}{2}(\Omega),\qquad 
\upres\in\bigcap_{\ell=0}^k\WSRhom{\ell+1}{2}(\Omega).
\end{align}
\end{thm}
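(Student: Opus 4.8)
I would prove \eqref{ThmHigherRegularityWeakSolutions_RegularityAssertion} by induction on $\ell$, the decisive step being the passage from $\WSRhom{\ell+1}{2}$ to $\WSRhom{\ell+2}{2}$ for the velocity together with the analogous step for the pressure, and I would argue separately according to where in $\Omega$ one works: in the interior of either subdomain, in a neighborhood of the interface $\Gamma$, and, on the exterior component $\Omega^\outer$, near spatial infinity. Throughout, the term $\Rey\partial_3\uvel$ is carried on the right-hand side; it is of lower order, and since $\grad\uvel\in\LR{2}(\R^3)$ for the given weak solution it never obstructs an estimate, so the Oseen case $\Rey\neq0$ reduces to the Stokes case. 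Using $\Div\uvel=g$ one moreover rewrites the momentum equation as the constant-coefficient Stokes system $-\mu\Delta\uvel+\grad\upres=f+\mu\grad g-\Rey\partial_3\uvel$, $\Div\uvel=g$, to which the classical Stokes theory of \cite{GaldiBookNew} applies; the starting data for the bootstrap are $\uvel\in\DSRN{1}{2}(\R^3)\embeds\LR{6}(\R^3)$, hence $\uvel\in\WSRloc{1}{2}$, and $\upres\in\LRloc{2}$.

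\textbf{Interior and far-field estimates.} On any open set compactly contained in $\Omega^\inner$ or $\Omega^\outer$, classical interior regularity for the Stokes system, applied to the rewritten equation and bootstrapped $\ell$ times with the help of $f\in\WSR{k}{2}$ and $g\in\WSR{k+1}{2}$, yields $\uvel\in\WSR{\ell+2}{2}$ and $\upres\in\WSR{\ell+1}{2}$ there. For the behavior near infinity, pick a cutoff $\zeta$ that vanishes in a neighborhood of $\Gamma$ and equals $1$ outside a large ball; then $(\zeta\uvel,\zeta\upres)$ solves a Stokes system on $\R^3$ whose data are $\zeta f$, $\zeta g$, $-\zeta\Rey\partial_3\uvel$ and commutator terms supported in a bounded annulus on which the interior estimates already apply, so the data lie in $\WSR{\ell}{2}(\R^3)$, respectively $\WSR{\ell+1}{2}(\R^3)$. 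The homogeneous whole-space Stokes estimate then gives $\grad^{\ell+2}(\zeta\uvel),\grad^{\ell+1}(\zeta\upres)\in\LR{2}(\R^3)$, hence $\grad^{\ell+2}\uvel^\outer\in\LR{2}(\Omega^\outer)$ and $\grad^{\ell+1}\upres^\outer\in\LR{2}(\Omega^\outer)$.

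\textbf{Regularity up to the interface.} This is the substantial step. I would cover $\Gamma$ by finitely many coordinate patches with a subordinate partition of unity, and near each point of $\Gamma$ flatten $\Gamma$ onto $\{x_3=0\}$ by a $\CR{k+3}$ diffeomorphism, whose Jacobian is $\CR{k+2}$ --- exactly the coefficient regularity needed to reach $\WSR{k+2}{2}$-estimates. In the new coordinates, \eqref{SystemOseenOmegaGeneral} is a strongly coupled two-phase Stokes--Oseen transmission problem in a half-ball split by the flat interface, with transmission relations $\jump\uvel=0$, $\uvel\cdot\nvec=\hone$ and $\projtang\jump{\fluidstress(\uvel,\upres)\nvec}=\htwo$. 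After subtracting a lifting of $(g,\hone)$ in the claimed regularity class (trace theorem on $\WSR{k+3/2}{2}(\Gamma)$) one may assume $\hone=0$, so that tangential difference quotients $D^\tau_h$, $\tau\in\{e_1,e_2\}$, of the unknown remain admissible test fields in \eqref{EqAssPressure} (they still satisfy $\phi\cdot\nvec=0$ on $\{x_3=0\}$). Testing \eqref{EqAssPressure} with $\phi=D^\tau_{-h}\bp{\zeta^2 D^\tau_h\uvel}$ for a local cutoff $\zeta$ and estimating the resulting terms --- using $\jump\uvel=0$ so that the interface contributions pair up, and pairing the difference-quotiented $\htwo$-term in the duality $\WSR{-1/2}{2}(\Gamma)\times\WSR{1/2}{2}(\Gamma)$ --- produces local control of $\partial_\tau\uvel$ in $\WSR{1}{2}$ and $\partial_\tau\upres$ in $\LR{2}$. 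The missing second normal derivatives are then read off from the system itself: the $x_3$-derivative of $\Div\uvel=g$ gives $\partial_3^2\uvel_3$, the normal component of the momentum equation gives $\partial_3\upres$, and its tangential components give $\partial_3^2\uvel_1$ and $\partial_3^2\uvel_2$; for $\ell\ge1$ one differentiates these relations further. Iterating $\ell$ times, each round lifting the regularity of the transformed data and coefficients by one order, yields $\uvel\in\WSR{\ell+2}{2}$, $\upres\in\WSR{\ell+1}{2}$ near $\Gamma$ for $0\le\ell\le k$. Transforming back, summing over the partition of unity, and combining with the interior and far-field estimates completes the proof.

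\textbf{Main obstacle.} The delicate point is precisely the interface step. Unlike a one-phase problem, the transmission conditions couple $\uvel^\inner$ with $\uvel^\outer$; the normal part of $\jump{\fluidstress(\uvel,\upres)\nvec}$ is \emph{not} prescribed but only implicitly determined; and the jump of $\mu$ forces $\symmgrad\uvel$, hence the fields one tests with, to be merely $\LR{2}$ across $\{x_3=0\}$. Making the difference-quotient computation go through therefore requires either careful bookkeeping of the boundary terms in \eqref{EqAssPressure} under $D^\tau_h$, so that the a priori uncontrolled interface contributions cancel by virtue of $\jump\uvel=0$, or, equivalently, an a priori estimate for the flat two-phase Stokes transmission problem obtained by verifying the Lopatinskii--Shapiro complementing condition for this particular set of transmission relations. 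Once that base estimate is available, the induction on $\ell$, the localization, and the reduction to the flat model via the $\CR{k+3}$ change of variables are routine.
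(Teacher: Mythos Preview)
Your proposal is correct and follows essentially the same route as the paper: reduce to $\hone=0$ by lifting, flatten $\Gamma$ locally, test the weak formulation with tangential difference quotients of the localized transformed velocity, and then recover the missing normal derivatives directly from the divergence equation and the momentum equations. The one step the paper spells out and you leave implicit is the tangential pressure bound: the paper obtains $\norm{\diffq{h}{l}\Upres}_2$ by inserting a test function $\diffq{-h}{l}\psi$ with $\Div\psi=\diffq{h}{l}\Upres$, where $\psi$ is constructed via Bogovskii separately on the upper and lower halves of the flattened cube with zero boundary data, so that $\psi$ vanishes on the flat interface and is admissible in the test space.
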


\begin{proof}
The proof is a standard application of a well-known technique based on difference quotients. In fact, with only minimal modification it is similar to
a proof of higher-order regularity for solutions to the Stokes system with prescribed normal velocity and tangential stress on the boundary; see  \cite[Proof of Theorem 2]{SolonnikovScadilov73}.
For the sake of completeness, we sketch the proof. We include only the case $\hone=0$ and $k=0$. The general case $\hone\neq 0$ and $k>0$ follows by a simple lifting technique and iteration procedure, respectively. Since higher-order regularity in $\Omega$ away from the boundary $\Gamma$ is well known for Stokes systems (see for example \cite[Section IV.2]{GaldiBookNew}), we focus on regularity up to the boundary $\Gamma$. To this end, consider an arbitrary $\tilde{x}\in\Gamma$ and
choose a cube $\cuberx\subset\R^3$, centered at $\tilde{x}$ with side length $r$, such that $\Gamma\cap\cuberx$ can be parameterized by a $\CR{3}$
function $\omega$. Without loss of generality, we may assume that $\tilde{x}=0$ and 
\begin{align*}
\Gamma\cap\cuberx = \Gamma\cap\cuber = \setcl{\bp{x_1,x_2,\omega(x_1,x_2)}}{\np{x_1,x_2}\in\cubepr},
\end{align*}
where $\cubepr\subset\R^2$ is the two-dimensional cube centered around $0$, and that $\grad\omega(0)=0$ as well as $\norm{\grad\omega}_\infty\ra 0$ as $r\ra 0$.
Let $\cutoff\in\CRci\np{\R^3}$ be a cut-off function with $\cutoff=1$ on $\cubehalfr$ and put 
\begin{align*}
&\Phi(x)\coloneqq \bp{x_1,x_2,x_3-\omega\np{x_1,x_2}},\\
&\Uvel:\cuber\ra\R^3,\ \Uvel\coloneqq \bb{\grad\Phi\np{\cutoff\uvel}}\circ\Phi^{-1},\quad
\Upres:\cuber\ra\R,\ \Upres\coloneqq \nb{\cutoff\upres}\circ\Phi^{-1}.
\end{align*}
We introduce test functions 
\[
\testspcube\bp{\cuber}\coloneqq \setc{\psi\in\WSR{1}{2}\bp{\cuber}^3}{\psi=0\ \ton\partial\cuber,\ \psi\cdot \nvec=0\ \ton\Gamma_0}
\]
with 
\[
\Gamma_0\coloneqq \setc{x\in\cuber}{x_3=0}.
\]
The transformed fields $\np{\Uvel,\Upres}$ satisfy the weak formulation
\begin{align}\label{ThmHigherRegularityWeakSolutions_WeakEqOnCube}
\forall\psi\in\testspcube\bp{\cuber}:\ \int_{\R^3} 2\mu\,\symmgrad(\Uvel):\symmgrad(\psi)\,\dx -
\int_{\R^3}\Upres\,\Div\psi\,\dx
=
\dpair{F_0}{\psi}
+\dpair{F_1}{\grad\psi},
\end{align}
where $F_0$ contains up to first-order terms of $\uvel$ and zeroth-order terms of $\upres$, and
$F_1$ contains first-order terms of $\Uvel$  multiplied with components of $\grad\omega$. 
The magnitude of the latter terms can be made small by choosing $r$  small.
Difference quotients are denoted by
$\diffq{h}{l}\Uvel(x)\coloneqq \frac{1}{h}\bp{\Uvel(x+h\e_l)-\Uvel(x)}$.
Importantly, difference quotients $\diffq{-h}{l}\diffq{h}{l}\Uvel$ in tangential direction $l=1,2$ are admissible as test functions in $\testspcube\bp{\cuber}$ and can therefore be inserted into \eqref{ThmHigherRegularityWeakSolutions_WeakEqOnCube},
which yields an estimate of $\norm{\symmgrad\np{\diffq{h}{l}\Uvel}}_2$ in terms of lower-order norms of $\uvel$ and $\upres$
as well as
$\norm{\diffq{h}{l}\Upres}_2$. A similar bound on $\norm{\grad\diffq{h}{l}\Uvel}_2$ follows from Korn's inequality.
Choosing in \eqref{ThmHigherRegularityWeakSolutions_WeakEqOnCube} a test function $\diffq{-h}{l}\psi\in\testspcube\bp{\cuber}$ with
$\Div\psi=\diffq{h}{l}{\Upres}$,
a bound on $\norm{\diffq{h}{l}\Upres}_2$ in terms
of lower-order norms of $\uvel$ and $\upres$ is obtained.
Such a test function is constructed by setting 
$\psi\coloneqq \psi^+$ in $\cubeplus\coloneqq \setc{x\in\cuber}{x_3>0}$ and 
$\psi\coloneqq \psi^-$ in $\cubeminus\coloneqq \setc{x\in\cuber}{x_3<0}$ where
\begin{align*}
\begin{pdeq}
\Div\psi^+ &= \diffq{h}{l}\Upres && \tin\cubeplus,\\
\psi^+ &= 0 && \ton\partial\cubeplus,
\end{pdeq}\qquad
\begin{pdeq}
\Div\psi^- &= \diffq{h}{l}\Upres && \tin\cubeminus,\\
\psi^- &= 0 && \ton\partial\cubeminus.
\end{pdeq}
\end{align*}
Existence of solutions to the two equations above and the estimates $\norm{\psi^\pm}_{1,2}\leq\Cc{c}\norm{\diffq{h}{l}\Upres}_2 $
are secured by \cite[Corollary III.5.1]{GaldiBookNew}.
It follows that $\norm{\grad\diffq{h}{l}\Uvel}_2+\norm{\diffq{h}{l}\Upres}_2$
is uniformly bounded in $h$, which implies $\partial_l\grad\Uvel,\partial_l\Upres\in\LR{2}\np{\cuber}$ for $l=1,2$.
Since $\Div\Uvel=G$ with $G$ containing only zeroth-order terms of $\uvel$, $\partial_3^2\Uvel\in\LR{2}\np{\cuber}$ follows as a combination
of $\partial_3\Div\Uvel=\partial_3 G$ and the regularity of $\Uvel$'s tangential derivatives. Finally, the distributional derivative $\partial_3\Upres$ can now be isolated in \eqref{ThmHigherRegularityWeakSolutions_WeakEqOnCube}
to deduce in each half of the cube
$\Upres\in\WSR{1}{2}\bp{\cubeplus}$ and
$\Upres\in\WSR{1}{2}\bp{\cubeminus}$.
It follows that  
$(\uvel,\upres)\in\WSR{2}{2}\bp{\calo^\inner(\tilde x)}\times\WSR{1}{2}\bp{\calo^\inner\np{\tilde x}}$ as well as
$(\uvel,\upres)\in\WSR{2}{2}\bp{\calo^\outer(\tilde x)}\times\WSR{1}{2}\bp{\calo^\outer\np{\tilde x}}$, where
$\calo(\tilde x)$ is a neighborhood of $\tilde{x}$ and
$\calo^\inner(\tilde x)\coloneqq \calo(\tilde x)\cap\Omega^\inner$,
$\calo^\outer(\tilde x)\coloneqq \calo(\tilde x)\cap\Omega^\outer$. Higher-order regularity of $\np{\uvel,\upres}$ up to the boundary $\Gamma$ is thereby established. 
\end{proof}

Finally, uniqueness of a weak solution to \eqref{SystemOseenOmegaGeneral} can be established. In fact, uniqueness can be obtained in a
much larger class of distributional solutions with even less summability at spatial infinity than $\uvel\in\LR{6}(\Rthree)$ satisfied by
a weak solution via Sobolev embedding. The theorem below is not optimal in
this respect, but suffices for the purposes of this article.

\begin{thm}\label{PropUniquenessWeakSolutionGeneral}
Let $\Gamma$ be a $\CR{2}$-smooth closed surface, and let $(\uvel,\upres)\in\WSRloc{1}{2}(\Rthree)^3\times\LRloc{2}(\Rthree)$ be a solution to \eqref{SystemOseenOmegaGeneral} in the
sense of \eqref{EqAssPressure} with $\uvel\in\LR{q}(\R^3)^3$ and $\upres\in\LR{r}\np{\Rthree}$ for some $q,r\in(1,\infty)$. If $(f,g,\hone,\htwo)=(0,0,0,0)$, then $\uvel=0$. 
\end{thm}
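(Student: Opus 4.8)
The plan is to prove $\uvel=0$ by a global weighted‑energy argument on $\R^3$: test the weak formulation \eqref{EqAssPressure} against $\cutoff_R^2\uvel$, let $R\ra\infty$, and deduce $\symmgrad\uvel\equiv 0$. A vector field in $\WSRloc{1}{2}(\R^3)^3$ with vanishing symmetric gradient is an affine rigid velocity field $x\mapsto Ax+b$ with $A$ skew‑symmetric, and since $\uvel\in\LR{q}(\R^3)$ with $q<\infty$ this forces $A=0$, $b=0$, i.e.\ $\uvel=0$. The interface conditions $\jump{\uvel}=0$, $\uvel\cdot\nvec=0$ and $\projtang\jump{\fluidstress(\uvel,\upres)\nvec}=0$ enter only through the fact that $\cutoff_R^2\uvel$ is an admissible test field and through the global weak identity; no separate analysis at $\Gamma$ is needed.

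First I would record the regularity and decay needed. Since all data vanish, interior elliptic regularity for the Stokes/Oseen operator (see \cite[\S IV.2, \S VII.1]{GaldiBookNew}, cf.\ Theorem~\ref{ThmHigherRegularityWeakSolutions}) gives $(\uvel,\upres)\in\CRi(\Omega)^3\times\CRi(\Omega)$ with $\Omega=\R^3\setminus\Gamma$, and taking the divergence of the momentum equation shows $\upres$ is harmonic in $\Omega$. Next, combining $\uvel\in\LR{q}(\R^3)$, $\upres\in\LR{r}(\R^3)$ with interior $\LR{s}$‑estimates for the homogeneous exterior system one obtains pointwise decay $\uvel(x),\grad\uvel(x),\upres(x)\ra 0$ as $\snorm{x}\ra\infty$; inserting this into the Green representation for the exterior Stokes problem if $\Rey=0$, resp.\ the exterior Oseen problem if $\Rey\neq 0$ (see \cite[Ch.~V \& VII]{GaldiBookNew}), upgrades it to the sharp fundamental‑solution decay: in the Stokes case $\snorm{\uvel(x)}\leq C\snorm{x}^{-1}$ and $\snorm{\grad\uvel(x)}+\snorm{\upres(x)}\leq C\snorm{x}^{-2}$, and in the Oseen case the corresponding anisotropic (wake) bounds. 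The point is that this yields
\begin{align*}
\frac1R\int_{\ball_{2R,R}}\bp{\snorm{\grad\uvel}\,\snorm{\uvel}+\snorm{\uvel}^2+\snorm{\upres}\,\snorm{\uvel}}\,\dx\ \longrightarrow\ 0\qquad\text{as }R\ra\infty,
\end{align*}
where in the Stokes case the middle term is absent and in the Oseen case the thinness of the wake keeps $R^{-1}\int_{\ball_{2R,R}}\snorm{\uvel}^2\,\dx$ bounded.

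Then I would carry out the energy identity. Pick $\cutoff_R\in\CRci(\R^3)$ with $\cutoff_R=1$ on $\ball_R$, $\supp\cutoff_R\subset\ball_{2R}$, $\snorm{\grad\cutoff_R}\leq C/R$. Then $\cutoff_R^2\uvel\in\hsp$, has compact support, and satisfies $(\cutoff_R^2\uvel)\cdot\nvec=0$ on $\Gamma$; since both sides of \eqref{EqAssPressure} are $\snorm{\cdot}_{1,2}$‑continuous, \eqref{EqAssPressure} remains valid for $\phi=\cutoff_R^2\uvel$. Using $f=0$, $\htwo=0$, $\Div\uvel=0$, and integrating the $\Rey$‑term by parts (legitimate because $\snorm{\uvel}^2\in\WSRloc{1}{1}(\R^3)$), one arrives at
\begin{align*}
2\int_{\R^3}\mu\,\cutoff_R^2\,\snorm{\symmgrad\uvel}^2\,\dx
&=-4\int_{\ball_{2R,R}}\mu\,\cutoff_R\,\symmgrad\uvel:\bp{\grad\cutoff_R\otimes\uvel}\,\dx
+2\int_{\ball_{2R,R}}\upres\,\cutoff_R\,\grad\cutoff_R\cdot\uvel\,\dx\\
&\quad+\Rey\int_{\ball_{2R,R}}\cutoff_R\,\partial_3\cutoff_R\,\snorm{\uvel}^2\,\dx .
\end{align*}
Each right‑hand term is bounded by $C$ times one of the three annular integrals above, hence tends to $0$; by monotone convergence the left‑hand side tends to $2\int_{\R^3}\mu\,\snorm{\symmgrad\uvel}^2\,\dx$. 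Thus $\symmgrad\uvel=0$ a.e.\ in $\R^3$, and the conclusion $\uvel=0$ follows as described.

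The \emph{main obstacle} is the decay step. The crude pointwise smallness $\uvel(x)=o(1)$ is \emph{not} enough to make the annular remainders vanish (the volume of $\ball_{2R,R}$ grows like $R^3$), so one genuinely needs the fundamental‑solution‑type asymptotics of solutions of the homogeneous exterior Stokes/Oseen system, and in the Oseen case it is precisely the faster, anisotropic decay away from the paraboloidal wake that makes $R^{-1}\int_{\ball_{2R,R}}\snorm{\uvel}^2\,\dx\to 0$. Bootstrapping this from the bare hypotheses $\uvel\in\LR{q}$, $\upres\in\LR{r}$ (for arbitrary $q,r\in(1,\infty)$) via interior estimates and the exterior‑domain linear theory of \cite{GaldiBookNew} is the only nontrivial ingredient; the energy identity and the rigid‑field argument are routine.
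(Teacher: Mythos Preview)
Your proposal is correct and follows essentially the same route as the paper: invoke exterior Stokes/Oseen asymptotics (via \cite[Theorem~V.3.2 and VII.6.2]{GaldiBookNew}) to obtain fundamental-solution decay, test \eqref{EqAssPressure} with a cut-off times $\uvel$, and let the annular remainders vanish to conclude $\symmgrad\uvel=0$. The only cosmetic differences are that the paper uses $\cutoff_R\uvel$ rather than $\cutoff_R^2\uvel$, and for the Oseen drift term it quotes the averaged estimate $\int_{\partial\ball_r}\snorm{\uvel}^2\,\dS\leq c\,r^{-1}$ explicitly instead of appealing to wake anisotropy; one small slip in your write-up is the phrase ``keeps $R^{-1}\int_{\ball_{2R,R}}\snorm{\uvel}^2\,\dx$ bounded'', whereas you need (and in fact get) that this quantity tends to zero.
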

\begin{proof}
The integrability assumption $\uvel\in\LR{q}(\R^3)$ combined with the fact that $\np{\uvel,\upres}$ solves a classical Stokes $(\Rey=0)$ or Oseen $(\Rey\neq0)$ problem with homogeneous right-hand side
in the exterior domain $\Omega^\outer$
implies that $\uvel$ exhibits the same pointwise rate of decay as
the three-dimensional Stokes fundamental solution $(\Rey=0)$ or 
the three-dimensional Oseen fundamental solution $(\Rey\neq 0)$;
see \cite[Theorem V.3.2 and Theorem VII.6.2]{GaldiBookNew} for example.
This means that  $\uvel(x)=\bigo(\snorm{x}^{-1})$ as $\snorm{x}\ra\infty$.
Moreover, we obtain $\upres\in\bigo\np{\snorm{x}^{-2}}$.
Let $\cutoff\in\CRci(\R)$ be a cut-off function with $\chi=1$ for $\snorm{x}<1$ and $\chi=0$ for $\snorm{x}>2$, and put $\chi_R\coloneqq \chi\bp{\frac{\snorm{x}}{R}}$.
Then $\cutoff_R\uvel$ is admissible as a test function in \eqref{EqAssPressure},
which implies
\begin{align*}
\int_{\R^3} 2\mu\,\symmgrad(\uvel):\bp{\cutoff_R\symmgrad(\uvel)+\grad\cutoff_R\otimes\uvel + \uvel\otimes\grad\cutoff_R}\dx
+\Rey\int_{\R^3}\np{\partial_3\uvel\cdot\uvel}\cutoff_R\,\dx
=\int_{\R^3}\upres\np{\grad\cutoff_R\cdot\uvel}\dx.
\end{align*}
Utilizing that $\uvel=\bigo\bp{\snorm{x}^{-1}}$, we use H\"older's inequality to estimate
\begin{align*}
\snormL{\int_{\R^3} \symmgrad\np{\uvel}:\grad\cutoff_R\otimes\uvel\,\dx}
&\leq \Cc{c}\norm{\symmgrad\np{\uvel}}_2 \, \Bp{\int_{\B_{2R,R}}\frac{\snorm{\uvel}^2}{R^2}\,\dx}^\half
\leq \Cc{c}R^{-1/2}\overset{R\ra\infty}{\longrightarrow} 0.
\end{align*}
Furthermore, in the Oseen case ($\Rey\neq0$) we even have the better averaged decay estimate
\[
\int_{\partial\ball_r} \snorm{\uvel}^2\,\dS\leq\Cc{c}r^{-1};
\]
see \cite[Exercise VII.6.1]{GaldiBookNew}; 
which leads to
\begin{align*}
\snormL{\int_{\R^3}\np{\partial_3\uvel\cdot\uvel}\cutoff_R\,\dx} = \snormL{\half\int_{\R^3}\snorm{\uvel}^2 \partial_3\cutoff_R\,\dx}
\leq \Cc{c}\int_R^{2R}\int_{\partial\ball_r} \frac{\snorm{\uvel}^2}{R}\,\dS\dr
\leq \Cc{c} R^{-1}
\overset{R\ra\infty}{\longrightarrow} 0.
\end{align*}
Since also
\begin{align*}
\snormL{\int_{\R^3}\upres\np{\grad\cutoff_R\cdot\uvel}\dx}\leq \Cc{c}
\int_{\ball_{2R,R}} R^{-4}\,\dx \leq \Cc{c} R^{-1}
\overset{R\ra\infty}{\longrightarrow} 0,
\end{align*}
we deduce $\norm{\symmgrad\np\uvel}_2=0$ and thus $\uvel=0$. 
\end{proof}

\subsection{Twofold half space} 

The main challenge towards \textit{a priori} $\LR{r}$ estimates of solutions to \eqref{SystemOseenOmegaGeneral}, \ie, \textit{a priori} estimates of 
Agmon--Douglis--Nirenberg type, is to obtain such estimates in the half-space case under disregard of the lower-order terms in the equations; the general case
then follows via a localization argument. We therefore first consider system 
\begin{align}\label{SystemStokesTwofoldHalfSpace}
\begin{pdeq}
\Div \fluidstress(\uvel,\upres) &=f && \text{in } \Rdot^3 , \\
\Div \uvel &= g && \text{in } \Rdot^3, \\
\jump{\uvel}&=h_0  &&\text{on } \partial\Rdot^3, \\
\restriction{\uvel}{\R^3_+} \cdot \nvec &=\hone &&\text{on } \partial\Rdot^3,\\
\projtang \jump{\fluidstress(\uvel,\upres)\nvec}&=\htwo && \text{on }\partial\Rdot^3,
\end{pdeq}
\end{align}
where $\nvec=-e_3$. We shall implicitly identify $\partial\Rdot^3$ with $\Rtwo$. 

In the celebrated work \cite{ADN2} of \textsc{Agmon}, \textsc{Douglis} and \textsc{Nirenberg}, \textit{a priori}
$\LR{r}$ estimates for strong solutions to elliptic systems with boundary values of a certain type were established.
For Stokes systems, both the Dirichlet boundary condition and the slip boundary condition that make up the boundary values in \eqref{SystemStokesTwofoldHalfSpace} fall into
the category of so-called Agmon--Douglis--Nirenberg problems covered by \cite{ADN2}. However, since the system \eqref{SystemStokesTwofoldHalfSpace} is strongly coupled, it
does not itself fall into this category, nor can it be decomposed into two systems to which the estimates from \cite{ADN2} can be applied separately. It therefore seems unavoidable that $\LR{r}$ estimates
for solutions to \eqref{SystemStokesTwofoldHalfSpace} have to be established without the help of \cite{ADN2}. We present a comprehensive proof below. We shall not employ the technique from \cite{ADN2} based on singular integrals, but choose instead an approach based on Fourier multipliers and real interpolation that seems particularly well suited for coupled
systems such as \eqref{SystemStokesTwofoldHalfSpace}.

The main result on \textit{a priori} $\LR{r}$ estimates of solutions to \eqref{SystemStokesTwofoldHalfSpace} is contained in Theorem \ref{ADN_TwoFoldHalfSpace} below.
The proof is divided into two lemmas, Lemma \ref{ADN_DirichletStokes} and Lemma \ref{ADN_LinTwoPhaseFlowProblem}, which the reader may find interesting in their own right.
For technical reasons, it is convenient to decompose 
both the data and the solution to \eqref{SystemStokesTwofoldHalfSpace} into
one part with lower frequency support and another part with higher frequency support in tangential directions $\e_1,\e_2$.
We shall repeatedly employ the Fourier transform $\FT_\Rtwo$ with respect to these two directions. 
To this end, observe that
$\FT_\Rtwo\bb{\uvel(\cdot,x_3)}(\xip)$
is well-defined in the sense of distributions $\TDR\np{\Rthree}$ when $\uvel\in\LR{r}\np\Rthree$ for some $r\in\np{1,\infty}$,
which will be the case whenever such an expression appears below.

\begin{lem}\label{ADN_DirichletStokes}
Let $r\in\np{1,\infty}$ and $b\in\WSR{2-1/r}{r}(\Rtwo)^3$ with
$\supp\FT_\Rtwo\nb{b}\subset\R^2\setminus\ball_1(0)$. Then there is
a solution $(\uvel,\upres)\in\WSR{2}{r}\np{\Rdot^3}^3\times\WSR{1}{r}\np{\Rdot^3}$ to 
\begin{align}\label{ADN_DirichletStokes_Eq}
\begin{pdeq}
\Div \fluidstress(\uvel,\upres) &=0 && \text{in } \Rdot^3 , \\
\Div \uvel &= 0 && \text{in } \Rdot^3, \\
\uvel&=b&&\text{on } \partial\Rdot^3,
\end{pdeq}
\end{align}
which satisfies 
\begin{align}\label{ADN_DirichletStokes_Est}
\norm{\uvel}_{2,r}+\norm{\upres}_{1,r} \leq \Cc{C}\,\norm{b}_{2-1/r,r},
\end{align}
where $\Cclast{C}=\Cclast{C}(r)$. Moreover,
$\FT_\Rtwo\bb{\uvel(\cdot,x_3)}(\xip)$ and 
$\FT_\Rtwo\bb{\upres(\cdot,x_3)}(\xip)$
are supported away from  $(\xip,x_3)\in\B_{1/2}(0)\times\R$.
\end{lem}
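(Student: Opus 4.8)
Since the interior right‑hand sides in \eqref{ADN_DirichletStokes_Eq} vanish and no transmission condition across $\partial\Rdot^3$ is prescribed, the system decouples into the two Dirichlet problems for the stationary Stokes equations in $\Rdot^3_+$ and in $\Rdot^3_-$, each with boundary datum $b$. Reflecting in $\set{x_3=0}$ turns the problem on $\Rdot^3_-$ into one of the same type on $\Rdot^3_+$ with datum $(b_1,b_2,-b_3)$, which has the same tangential Fourier support and the same norm as $b$. It therefore suffices to construct a solution on $\Rdot^3_+$, with a fixed positive constant viscosity, that satisfies \eqref{ADN_DirichletStokes_Est}.

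\textbf{Explicit construction via the tangential Fourier transform.} The plan is to apply $\FT_\Rtwo$ in $x'=(x_1,x_2)$. For fixed $\xip$ the Stokes system becomes an ODE in $x_3>0$: taking the divergence of the momentum equation and using $\Div\uvel=0$ gives $\bp{\snorm{\xip}^2-\partial_3^2}\ft{\upres}=0$, so the admissible (bounded) pressure mode is $\ft{\upres}(\xip,x_3)=\beta(\xip)\,\e^{-\snorm{\xip}x_3}$; substituting this back, the momentum equations become inhomogeneous second‑order ODEs whose bounded solutions are $\ft{\uvel}(\xip,x_3)=\bp{\alpha(\xip)+\snorm{\xip}x_3\,\gamma(\xip)}\e^{-\snorm{\xip}x_3}$, and the coefficients $\alpha,\beta,\gamma$ are uniquely determined by $\ft{\uvel}(\xip,0)=\ft{b}(\xip)$ together with the constraint $\Div\uvel=0$. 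Solving this finite linear system gives, for $\xip\neq0$,
\begin{align*}
\ft{\uvel}(\xip,x_3)=\calm\Bp{\tfrac{\xip}{\snorm{\xip}},\,\snorm{\xip}x_3}\ft{b}(\xip),
\qquad
\ft{\upres}(\xip,x_3)=\snorm{\xip}\;\bfv\Bp{\tfrac{\xip}{\snorm{\xip}}}\cdot\ft{b}(\xip)\;\e^{-\snorm{\xip}x_3},
\end{align*}
where the entries of the matrix $\calm(\omega,s)$ are linear combinations of $s^j\e^{-s}$, $j\in\set{0,1}$, with coefficients polynomial in $\omega\in\sphere^1$, and $\bfv$ is polynomial in $\omega$. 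Since $\supp\FT_\Rtwo\nb{b}\subset\R^2\setminus\ball_1(0)$, these symbols are evaluated only at $\xip\neq0$, and inverting $\FT_\Rtwo$ produces a pair $(\uvel,\upres)$ that solves \eqref{ADN_DirichletStokes_Eq} classically on $\Rdot^3_+$. Furthermore, for every $x_3$ the maps $\ft{b}\mapsto\ft{\uvel}(\cdot,x_3)$ and $\ft{b}\mapsto\ft{\upres}(\cdot,x_3)$ are multiplications by a symbol in $\xip$, so the supports of $\FT_\Rtwo\bb{\uvel(\cdot,x_3)}$ and $\FT_\Rtwo\bb{\upres(\cdot,x_3)}$ lie in $\supp\ft{b}\subset\R^2\setminus\ball_1(0)$, which is disjoint from $\B_{1/2}(0)$; this already yields the last assertion of the lemma.

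\textbf{The $\LR{r}$ estimate, and the main obstacle.} Because $\ft{b}$ vanishes on $\ball_1(0)$, we may multiply all symbols above by a fixed $\cutoff\in\CRci(\R^2)$ with $\cutoff\equiv1$ outside $\ball_1(0)$ and $\cutoff\equiv0$ on $\ball_{1/2}(0)$; this removes the singularity of the symbols at $\xip=0$ and confines the exponentials to $\set{\snorm{\xip}\geq\tfrac12}$, where $\e^{-\snorm{\xip}x_3}\leq\e^{-x_3/2}$. Then, for $\snorm{\alpha}\leq2$, the tangential Fourier transform of $\partial^\alpha\uvel(\cdot,x_3)$ (and, for $\snorm{\alpha}\leq1$, of $\partial^\alpha\upres(\cdot,x_3)$) is a finite sum of terms $\cutoff(\xip)\,\snorm{\xip}^{\snorm{\alpha}}\,p\np{\tfrac{\xip}{\snorm{\xip}}}\,(1+\snorm{\xip}x_3)^j\,\e^{-\snorm{\xip}x_3}\,\ft{b}(\xip)$. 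I would estimate each such term by combining the Mikhlin--Lizorkin multiplier theorem on $\Rtwo$ with a real‑interpolation argument: the multiplier theorem shows that the operator with symbol $\cutoff(\xip)\,p\np{\tfrac{\xip}{\snorm{\xip}}}\,(1+\snorm{\xip}x_3)^j\e^{-\snorm{\xip}x_3}$ --- the degree‑$0$‑homogeneous part, the polynomial factor and every differentiated exponential being absorbed into the decay --- is bounded on $\LR{r}(\Rtwo)$ with norm $\leq\Cc{c}\,\e^{-x_3/4}$, while real interpolation identifies the trace space $\WSR{2-1/r}{r}(\Rtwo)$ and reconciles the homogeneity $\snorm{\xip}^{\snorm{\alpha}}$ with the $2-1/r$ boundary derivatives carried by $b$. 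For $\snorm{\alpha}\leq1$ one has $\snorm{\alpha}\leq2-1/r$, so a direct multiplier bound together with $\WSR{2-1/r}{r}(\Rtwo)\embeds\WSR{\snorm{\alpha}}{r}(\Rtwo)$ gives $\norm{\partial^\alpha\uvel(\cdot,x_3)}_{r}\leq\Cc{c}\,\e^{-x_3/4}\norm{b}_{2-1/r,r}$, which is integrable in $x_3$; likewise for $\partial^\alpha\upres$ with $\snorm{\alpha}=0$. The genuinely delicate case is $\snorm{\alpha}=2$ for $\uvel$ (and $\snorm{\alpha}=1$ for $\upres$): here $\snorm{\xip}^{\snorm{\alpha}}$ is \emph{not} dominated pointwise by the $2-1/r$ derivatives of $b$, and one must exploit that a tangential frequency $\snorm{\xip}$ is exponentially damped already on the $x_3$‑scale $\snorm{\xip}^{-1}$; capturing this interplay between frequency and the $x_3$‑integration, so that the borderline fractional gain of $1/r$ derivative comes out exactly, is where the bulk of the work lies, and routing it through real interpolation from the integer‑order estimates is the cleanest way to avoid Besov‑space technicalities. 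Summing over $\snorm{\alpha}\leq2$ and $\snorm{\alpha}\leq1$, integrating in $x_3$, and adding the reflected contribution on $\Rdot^3_-$ then yields \eqref{ADN_DirichletStokes_Est}. Uniqueness is not claimed, so nothing further is required.
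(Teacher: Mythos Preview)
Your approach is essentially the same as the paper's: explicit solution via the tangential Fourier transform, a cutoff near $\xip=0$, Marcinkiewicz/Mikhlin multiplier bounds, and real interpolation for the top-order terms. Two remarks are worth making.

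First, your reduction to a single half space by reflection is fine but unnecessary; the paper writes the solution on $\Rdot^3$ in one stroke via $\sgn(x_3)$ and $\snorm{x_3}$, which avoids bookkeeping the reflected datum.

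Second, your treatment of the top-order derivatives (the case $\snorm{\alpha}=2$ for $\uvel$ and $\snorm{\alpha}=1$ for $\upres$) is the decisive step, and here your sketch stops short of the point that actually makes the argument work. ``Real interpolation from the integer-order estimates'' is correct, but you do not name the endpoints. The paper's pair is
\[
\norm{\grad^2\soloprvel(\phi)}_{\LR{\infty}_{x_3}(\LR{r}_{x'})}\leq c\,\norm{\phi}_{\WSR{2}{r}(\Rtwo)}
\quad\text{and}\quad
\norm{\grad^2\soloprvel(\phi)}_{\LR{1,\infty}_{x_3}(\LR{r}_{x'})}\leq c\,\norm{\phi}_{\WSR{1}{r}(\Rtwo)},
\]
the second following from $\sup_{x_3}\snorm{x_3}\,\norm{\grad^2\soloprvel(\phi)(\cdot,x_3)}_{r}\leq c\,\norm{\grad\phi}_r$, which is precisely your observation that a frequency $\snorm{\xip}$ is damped on the scale $\snorm{\xip}^{-1}$. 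Interpolating with parameter $(1-1/r,r)$ produces $\LR{r}_{x_3}(\LR{r}_{x'})$ on the left and $\WSR{2-1/r}{r}(\Rtwo)$ on the right. Without specifying the weak-$\LR{1}$ endpoint, the step ``so that the borderline fractional gain of $1/r$ derivative comes out exactly'' remains an intention rather than an argument. Your separate handling of the lower orders via the explicit exponential bound $\e^{-x_3/4}$ is a pleasant shortcut that the paper does not take, but it does not help at top order.
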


\begin{proof}
A solution to \eqref{ADN_DirichletStokes_Eq} can be constructed explicitly. To this end, consider first a sufficiently smooth right-hand side
$b\in\SR\np{\R^2}^3$ with $\supp\FT_\Rtwo\nb{b}\subset\R^2\setminus\ball_{1/2}(0)$. 
We employ the notation $\ft{b}\coloneqq \FT_\Rtwo\nb{b}$ and $\vvel\coloneqq (\uvel_1,\uvel_2)$, $\wvel\coloneqq \uvel_3$ as well as 
$b_\vvel\coloneqq (b_1,b_2)$ and $b_\wvel\coloneqq b_3$.
An application of the Fourier transform $\FT_\Rtwo$ with respect to $x'\in\R^2$ in \eqref{ADN_DirichletStokes_Eq} yields
\begin{align}\label{SystemStokesDirichletDHspaceFourierTransform}
\begin{pdeq}
-\mu \snorm{\xip}^2 \ft \vvel +\mu\partial_3^2 \ft\vvel- i\xip\ft \upres &=0 && \text{in } \Rdot^3 , \\
-\mu \snorm{\xip}^2 \ft \wvel +\mu\partial_3^2 \ft\wvel- \partial_3\ft \upres &=0 && \text{in } \Rdot^3 , \\
i\xip\cdot \ft \vvel + \partial_3 \ft \wvel &= 0 && \text{in } \Rdot^3, \\
(\ft\vvel,\ft\wvel)&=(\ft b_\vvel,\ft b_\wvel) &&\text{on } \partial\Rdot^3.
\end{pdeq}
\end{align}
Therefore $\upres$ satisfies $\snorm{\xip}^2\,\ft\upres-\partial_3^2\,\ft\upres =0$ and thus
\begin{align*}
\ft\upres(\xip,x_3) =
\begin{pdeq}
&A_1(\xip)\e^{-\snorm{\xip}x_3} &&\tif\ x_3>0,\\
&A_2(\xip)\e^{\snorm{\xip}x_3}  &&\tif\ x_3<0.
\end{pdeq}
\end{align*}
We insert $\upres$ into 
\eqrefsub{SystemStokesDirichletDHspaceFourierTransform}{1} and \eqrefsub{SystemStokesDirichletDHspaceFourierTransform}{2}
and solve the resulting differential equations. 
Taking into account the boundary conditions \eqrefsub{SystemStokesDirichletDHspaceFourierTransform}{4},
we obtain
\begin{align*}
\ft\uvel =
\begin{pdeq}
&\Bb{\frac{A_1(\xip) x_3}{2\mu\snorm{\xip}}
\begin{pmatrix}
-i\xip \\
\snorm{\xip}
\end{pmatrix}
+
\begin{pmatrix}
\ft b_v\\ \ft b_w
\end{pmatrix}
}
e^{-\snorm{\xip}x_3} && \tif\ x_3>0,\\
&\Bb{\frac{A_2(\xip) x_3}{2\mu\snorm{\xip}}
\begin{pmatrix}
i\xip \\
\snorm{\xip}
\end{pmatrix}
+
\begin{pmatrix}
\ft b_v\\ \ft b_w
\end{pmatrix}
}
e^{\snorm{\xip}x_3}&& \tif\ x_3<0.
\end{pdeq}
\end{align*}
Inserting the above formula for $\ft\uvel$ into \eqrefsub{SystemStokesDirichletDHspaceFourierTransform}{3}, we find that
\begin{align*}
A_1(\xip)=A_2(\xip)=2\mu\bp{\sgn\np{x_3}\snorm{\xip}\ft b_\wvel-i\xip\cdot \ft b_\vvel}.
\end{align*}
Consequently, a solution to \eqref{ADN_DirichletStokes_Eq} is given by
\begin{align}\label{ADN_DirichletStokes_DefOfuvelupres}
\begin{aligned}
&\uvel(x',x_3)\coloneqq \iFT_\Rtwo\bb{M_b(\xip,x_3)\,e^{-\snorm{\xip}\snorm{x_3}}},\\
&\upres(x',x_3)\coloneqq \iFT_\Rtwo\bb{m_b(\xip,x_3)\,e^{-\snorm{\xip}\snorm{x_3}}},
\end{aligned}
\end{align}
where
\begin{align*}
&M_b(\xip,x_3)\coloneqq {\frac{ \bp{\sgn\np{x_3}\snorm{\xip}\ft b_\wvel-i\xip\cdot \ft b_\vvel}\snorm{x_3}}{\snorm{\xip}}
\begin{pmatrix}
-i\xip \\
\sgn(x_3)\snorm{\xip}
\end{pmatrix}
+
\begin{pmatrix}
\ft b_v\\ \ft b_w
\end{pmatrix}},\\
&m_b(\xip,x_3)\coloneqq 2\mu\bp{\sgn\np{x_3}\snorm{\xip}\ft b_\wvel-i\xip\cdot \ft b_\vvel}.
\end{align*}
Although $M_b$ has a singularity, $(\uvel,\upres)$ as defined above is a well-defined solution, smooth on $\Rdot^3$ even, due to the assumption that $\ft b(\xip)$ has support away from $0$.
In order to provide an estimate for the solution, we
let $\cutoffkappa_{1/4}\in\CRci\np{\R^2}$ with $\kappa_{1/4}=0$ on $\ball_{1/4}(0)$ and
$\kappa_{1/4}=1$ on $\R^2\setminus\ball_{1/2}(0)$, and 
consider the truncation  
\begin{align}\label{ADN_DirichletStokes_TruncatedSolOpr}
\soloprvel:\SR\np{\Rtwo}^3\ra\SR\np{\Rthree}^3,\quad
\soloprvel(\phi)\coloneqq \iFT_\Rtwo\Bb{\cutoffkappa_{1/4}(\xip) M_\phi(\xip,x_3)\,e^{-\snorm{\xip}\snorm{x_3}}}
\end{align}
of the solution operator.
The singularity of $M_\phi$ makes it necessary to employ the truncation $\cutoffkappa_{1/4}$ to ensure that $\soloprvel$ is well-defined.
We shall use real interpolation to show that $\soloprvel$ extends to a bounded operator 
$\soloprvel:\WSR{2-1/r}{r}\np{\Rtwo}\ra\WSR{2}{r}\np{\Rdot^3}$. To this end, we 
observe for $m\in\N_0$ and any $x_3\in\R$  that
the symbol $\xip\mapsto\np{\snorm{\xip}\snorm{x_3}}^m\e^{-\snorm{\xip}\snorm{x_3}}$ is an $\LR{r}\np{\Rtwo}$-multiplier.
Specifically, one may verify that  
\begin{align*}
\sup_{x_3\in\R}\sup_{\varepsilon\in\{0, 1\}^{2}}\sup_{\xip\in\Rtwo}
\left|{\xip_1}^{\varepsilon_1}{\xip_2}^{\varepsilon_2}
\partial_{\xip_1}^{\varepsilon_1}\partial_{\xip_2}^{\varepsilon_2}
\bb{\np{\snorm{\xip}\snorm{x_3}}^m\e^{-\snorm{\xip}\snorm{x_3}}} \right|<\infty,
\end{align*}
whence it follows from the Marcinkiewicz Multiplier Theorem (see for example \cite[Corollary 6.2.5]{Grafakos2}) that the Fourier-multiplier operator with  symbol 
$\xip\mapsto\np{\snorm{\xip}\snorm{x_3}}^m\e^{-\snorm{\xip}\snorm{x_3}}$ is a bounded operator on $\LR{r}\np\Rtwo$ with operator norm independent of $x_3$, that is,
\begin{align}\label{PurelyOscProblem_HomoBrdData_MultiplierOprNormIndep}
\sup_{x_3\in\R}\,\normL{\phi\mapsto\FT_{\Rtwo}
\Bb{\np{\snorm{\xip}\snorm{x_3}}^m\e^{-\snorm{\xip}\snorm{x_3}}\FT_{\Rtwo}\nb{\phi}}}_{\linoprspace\np{\LR{r}\np{\Rtwo},\LR{r}\np{\Rtwo}}}<\infty.
\end{align}
We return to \eqref{ADN_DirichletStokes_TruncatedSolOpr} and employ \eqref{PurelyOscProblem_HomoBrdData_MultiplierOprNormIndep} to deduce
\begin{align*}
\norm{\grad^2_{x'}\soloprvel(\phi)}_{\LR{\infty}_{x_3}(\R;\LR{r}(\Rtwo))}
&\leq \Cc{c}\,\norm{\grad ^2 \phi}_{\LR{r}\np{\Rtwo}},\\
\norm{\partial^2_{x_3}\soloprvel(\phi)}_{\LR{\infty}_{x_3}(\Rdot;\LR{r}(\Rtwo))} 
&\leq \Cc{c}\,\norm{\grad ^2 \phi}_{\LR{r}\np{\Rtwo}},\\
\norm{\soloprvel(\phi)}_{\LR{\infty}_{x_3}(\R;\LR{r}(\Rtwo))}
&\leq \Cc{c}\,\norm{\phi}_{\LR{r}\np{\Rtwo}},
\end{align*}
where the restriction in the norm of the left-hand side to the twofold real line $\Rdot$ in the second estimate is required since
$\partial_{x_3}\soloprvel(\phi)$ has a singularity at $x_3=0$.
It follows that
\begin{align}\label{ADN_DirichletStokes_TruncatedSolOpr_FirstInterpolationEst}
\norm{\grad^2\soloprvel(\phi)}_{\LR{\infty}_{x_3}(\R;\LR{r}(\Rtwo))} +\norm{\soloprvel(\phi)}_{\LR{\infty}_{x_3}(\R;\LR{r}(\Rtwo))}
\leq \Cc{c}\,\norm{\phi}_{\WSR{2}{r}\np{\Rtwo}}.
\end{align}
This estimate shall serve as an interpolation endpoint. To obtain the opposite endpoint, we again employ \eqref{PurelyOscProblem_HomoBrdData_MultiplierOprNormIndep} to infer
\begin{align*}
\sup_{x_3\in\R}\,\norm{\snorm{x_3}\,\grad^2_{x'}\soloprvel\np{\phi}}_{\LR{r}(\Rtwo)}
&\leq \Cc{c}\,\norm{\grad \phi}_{\LR{r}\np{\Rtwo}},\\
\sup_{x_3\in\Rdot}\,\norm{\snorm{x_3}\,\partial^2_{x_3}\soloprvel\np{\phi}}_{\LR{r}(\Rtwo)}
&\leq \Cc{c}\,\norm{\grad \phi}_{\LR{r}\np{\Rtwo}},\\
\sup_{x_3\in\R}\,\norm{\snorm{x_3}\,\soloprvel\np{\phi}}_{\LR{r}(\Rtwo)}
&\leq \Cc{c}\,\norm{\phi}_{\LR{r}\np{\Rtwo}},
\end{align*}
where the last estimate relies on the truncation introduced in $\soloprvel$.
It follows that
\begin{align}\label{ADN_DirichletStokes_TruncatedSolOpr_SecondInterpolationEst}
\begin{aligned}
\norm{\grad^2\soloprvel\np{\phi}}_{\LR{1,\infty}_{x_3}\np{\Rdot;\LR{r}(\Rtwo)}}
+\norm{\soloprvel\np{\phi}}_{\LR{1,\infty}_{x_3}\np{\Rdot;\LR{r}(\Rtwo)}}
\leq \Cc{c}\,\norm{\phi}_{\WSR{1}{r}\np{\Rtwo}}.
\end{aligned}
\end{align}
Real interpolation yields
\begin{align*}
\Bp{\LR{1,\infty}\bp{\Rdot;\LR{r}\np{\Rtwo}},\LR{\infty}\bp{\Rdot;\LR{r}\np{\Rtwo}}}_{1-1/r,r} 
&= \LR{r}\bp{\Rdot,\LR{r}\np{\Rtwo}},\\
\Bp{\WSR{2}{r}\np{\Rtwo},\WSR{1}{r}\np{\Rtwo}}_{1-1/r,r} 
&= \WSR{2-1/r}{r}\np{\Rtwo}.
\end{align*}
Consequently, \eqref{ADN_DirichletStokes_TruncatedSolOpr_FirstInterpolationEst} and \eqref{ADN_DirichletStokes_TruncatedSolOpr_SecondInterpolationEst}
imply 
\begin{align*}
\norm{\soloprvel\np{\phi}}_{\WSR{2}{r}\np{\Rdot^3}}\leq \Cc{c}\,\norm{\phi}_{\WSR{2-1/r}{r}\np{\Rtwo}},
\end{align*}
whence $\soloprvel$ extends to a bounded operator $\soloprvel:\WSR{2-1/r}{r}\np{\Rtwo}\ra\WSR{2}{r}\np{\Rdot^3}$.
Recalling the formula \eqref{ADN_DirichletStokes_DefOfuvelupres} for the solution $u$ to \eqref{ADN_DirichletStokes_Eq} and that
$\supp\FT_\Rtwo\nb{b}\subset\R^2\setminus\ball_{1/2}(0)$, we clearly have
$\uvel=\soloprvel(b)$.
It follows that
$\norm{\uvel}_{2,r}\leq \Cclast{c}\,\norm{b}_{2-1/r,r}$. 
In a completely similar manner, one shows that also 
$\norm{\upres}_{1,r}\leq \Cclast{c}\,\norm{b}_{2-1/r,r}$. Thus the lemma follows for this particular choice of $b\in\SR(\Rtwo)$.
Since any $b\in\WSR{2-1/r}{r}(\Rtwo)$ with
$\supp\FT_\Rtwo\nb{b}\subset\R^2\setminus\ball_1(0)$ can be approximated in $\WSR{2-1/r}{r}(\Rtwo)$ by a sequence $\seqkN{b}\subset\SR(\Rtwo)$
with $\supp\FT_\Rtwo\nb{b}\subset\R^2\setminus\ball_{1/2}(0)$ via a standard mollifier procedure, we conclude the lemma in its entirety.
\end{proof}

\begin{lem}\label{ADN_LinTwoPhaseFlowProblem}
Let $r\in(1,\infty)$. For all 
$\Hone\in\WSR{2-1/r}{r}(\Rtwo)$ and $\Htwo\in\WSR{1-1/r}{r}(\Rtwo)^3$ with 
$\supp\FT_\Rtwo\nb{\Hone}\subset\R^2\setminus\ball_1(0)$,
$\supp\FT_\Rtwo\nb{\Htwo}\subset\R^2\setminus\ball_1(0)$ 
and $\Htwo\cdot\ethree=0$
there exists
a solution $(\uvel,\upres)\in\WSR{2}{r}\np{\Rdot^3}^3\times\WSR{1}{r}\np{\Rdot^3}$ to 
\begin{align}\label{ADN_LinTwoPhaseFlowProblem_Eq}
\begin{pdeq}
\Div \fluidstress(\uvel,\upres) &=0 && \text{in } \Rdot^3 , \\
\Div \uvel &= 0 && \text{in } \Rdot^3, \\
\jump{\uvel}&=0  &&\text{on } \partial\Rdot^3, \\
\uvel \cdot \nvec &=\Hone &&\text{on } \partial\Rdot^3,\\
\projtang \jump{\fluidstress(\uvel,\upres)\nvec}&=\Htwo && \text{on }\partial\Rdot^3
\end{pdeq}
\end{align}
that satisfies 
\begin{align}\label{ADN_LinTwoPhaseFlowProblem_Est}
\norm{\uvel}_{2,r}+\norm{\upres}_{1,r} \leq \Cc{C}\,\np{\norm{\Hone}_{2-1/r,r}+\norm{\Htwo}_{1-1/r,r}},
\end{align}
where $\Cclast{C}=\Cclast{C}(r)$.
Moreover,
$\FT_\Rtwo\bb{\uvel(\cdot,x_3)}(\xip)$ and 
$\FT_\Rtwo\bb{\upres(\cdot,x_3)}(\xip)$
are supported away from  $(\xip,x_3)\in\B_{1/2}(0)\times\R$.
\end{lem}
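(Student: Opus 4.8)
The plan is to imitate the proof of Lemma~\ref{ADN_DirichletStokes}: construct $(\uvel,\upres)$ explicitly by means of the tangential Fourier transform $\FT_\Rtwo$, truncate the resulting solution symbol near $\xip=0$, and then combine the Marcinkiewicz Multiplier Theorem with real interpolation to obtain \eqref{ADN_LinTwoPhaseFlowProblem_Est}. By the mollification argument from the end of the proof of Lemma~\ref{ADN_DirichletStokes} it suffices to treat Schwartz data $\Hone\in\SR\np{\Rtwo}$ and $\Htwo\in\SR\np{\Rtwo}^3$ with $\Htwo\cdot\ethree=0$ and $\supp\FT_\Rtwo\nb{\Hone},\supp\FT_\Rtwo\nb{\Htwo}\subset\Rtwo\setminus\ball_{1/2}(0)$. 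Applying $\FT_\Rtwo$ to \eqref{ADN_LinTwoPhaseFlowProblem_Eq} and writing $\vvel=(\uvel_1,\uvel_2)$, $\wvel=\uvel_3$, one finds exactly as in Lemma~\ref{ADN_DirichletStokes} that $\ft\upres$ solves $\snorm{\xip}^2\ft\upres-\partial_3^2\ft\upres=0$ on each half-line, so that $\ft\upres(\xip,x_3)=A^\pm(\xip)\,\e^{-\snorm{\xip}\snorm{x_3}}$ for $\pm x_3>0$; inserting this into the transformed momentum equations and solving the resulting (resonant) second-order ODEs yields
\begin{equation*}
\ft\uvel(\xip,x_3)=D^\pm(\xip)\,\e^{-\snorm{\xip}\snorm{x_3}}
+\frac{A^\pm(\xip)\,x_3}{2\mu\snorm{\xip}}
\begin{pmatrix}-i\xip \\ \sgn(x_3)\snorm{\xip}\end{pmatrix}
\e^{-\snorm{\xip}\snorm{x_3}}\qquad(\pm x_3>0),
\end{equation*}
with $D^\pm(\xip)\in\C^3$ still at our disposal.

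Next I would impose the remaining conditions in \eqref{ADN_LinTwoPhaseFlowProblem_Eq}. The two solenoidality requirements $\Div\uvel=0$ remove two of the eight scalar amplitudes $(A^\pm,D^\pm)$, and the interface conditions $\jump{\uvel}=0$ (three scalar equations), $\uvel\cdot\nvec=\Hone$ (one scalar equation), and $\projtang\jump{\fluidstress(\uvel,\upres)\nvec}=\Htwo$ (two scalar equations, the hypothesis $\Htwo\cdot\ethree=0$ being exactly the solvability condition forced by $\projtang$) then form a linear system $\cala(\xip)$ for the six remaining amplitudes, with right-hand side assembled from $\ft\Hone$ and $\ft\Htwo$. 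In contrast to Lemma~\ref{ADN_DirichletStokes}, where the analogous system splits into two independent half-space Stokes--Dirichlet problems, here $\cala(\xip)$ genuinely couples the two sides of $\partial\Rdot^3$. By the scaling $\xip\mapsto t\xip$ its determinant is homogeneous in $\xip$, and it does not vanish for $\xip\neq0$: this is precisely the Lopatinski--Shapiro (complementing) condition for the two-phase Stokes operator, which holds because the boundary operators appearing in \eqref{ADN_LinTwoPhaseFlowProblem_Eq} are of Agmon--Douglis--Nirenberg type. Inverting $\cala(\xip)$ and substituting back, one obtains a solution of \eqref{ADN_LinTwoPhaseFlowProblem_Eq} of the form
\begin{equation*}
\uvel=\iFT_\Rtwo\bb{M_1(\xip,x_3)\,\ft\Hone+M_2(\xip,x_3)\cdot\ft\Htwo},\qquad
\upres=\iFT_\Rtwo\bb{m_1(\xip,x_3)\,\ft\Hone+m_2(\xip,x_3)\cdot\ft\Htwo},
\end{equation*}
in which $M_1,M_2,m_1,m_2$ are finite sums of terms $\snorm{\xip}^{k}\,\bp{\snorm{\xip}\snorm{x_3}}^{m}\,\e^{-\snorm{\xip}\snorm{x_3}}\,\omega(\xip)$, with $m\in\set{0,1,2}$, $\omega$ a smooth symbol homogeneous of degree $0$, and $k=0$ for $M_1$, $k=-1$ for $M_2$, $k=1$ for $m_1$, $k=0$ for $m_2$; the only singularity is the factor $\snorm{\xip}^{k}$ at the origin when $k<0$.

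Finally, exactly as in Lemma~\ref{ADN_DirichletStokes}, I would fix the cut-off $\cutoffkappa_{1/4}\in\CRci\np{\Rtwo}$ with $\cutoffkappa_{1/4}=0$ on $\ball_{1/4}(0)$ and $\cutoffkappa_{1/4}=1$ outside $\ball_{1/2}(0)$, insert $\cutoffkappa_{1/4}(\xip)$ into the symbols $M_1,M_2,m_1,m_2$ to define truncated solution operators $\soloprvel_1,\soloprvel_2$ on $\SR\np{\Rtwo}$, and note that for every fixed $x_3$ the symbol $\xip\mapsto\cutoffkappa_{1/4}(\xip)\snorm{\xip}^{k}\bp{\snorm{\xip}\snorm{x_3}}^{m}\e^{-\snorm{\xip}\snorm{x_3}}\omega(\xip)$ satisfies the Marcinkiewicz condition with a bound uniform in $x_3$, just as for the symbol in \eqref{PurelyOscProblem_HomoBrdData_MultiplierOprNormIndep}. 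This produces the two endpoint estimates
\begin{equation*}
\norm{\grad^2\soloprvel_1(\phi)}_{\LR{\infty}_{x_3}(\R;\LR{r}(\Rtwo))}+\norm{\soloprvel_1(\phi)}_{\LR{\infty}_{x_3}(\R;\LR{r}(\Rtwo))}\leq c\,\norm{\phi}_{\WSR{2}{r}(\Rtwo)}
\end{equation*}
and the corresponding $\LR{1,\infty}_{x_3}$-bound by $c\,\norm{\phi}_{\WSR{1}{r}(\Rtwo)}$ for $\soloprvel_1$, the analogous pair with the regularity indices lowered by one ($\WSR{1}{r}(\Rtwo)$ and $\LR{r}(\Rtwo)$) for $\soloprvel_2$, and the one-order-rougher versions governing the pressure. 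Real interpolation with parameters $(1-1/r,r)$, using
\begin{equation*}
\bp{\LR{1,\infty}(\Rdot;\LR{r}(\Rtwo)),\LR{\infty}(\Rdot;\LR{r}(\Rtwo))}_{1-1/r,r}=\LR{r}(\Rdot;\LR{r}(\Rtwo)),\qquad
\bp{\WSR{2}{r}(\Rtwo),\WSR{1}{r}(\Rtwo)}_{1-1/r,r}=\WSR{2-1/r}{r}(\Rtwo)
\end{equation*}
and $\bp{\WSR{1}{r}(\Rtwo),\LR{r}(\Rtwo)}_{1-1/r,r}=\WSR{1-1/r}{r}(\Rtwo)$, then upgrades these to bounded operators $\soloprvel_1\colon\WSR{2-1/r}{r}(\Rtwo)\ra\WSR{2}{r}(\Rdot^3)$ and $\soloprvel_2\colon\WSR{1-1/r}{r}(\Rtwo)\ra\WSR{2}{r}(\Rdot^3)$, and analogously into $\WSR{1}{r}(\Rdot^3)$ for the pressure operators. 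Because $\supp\ft\Hone,\supp\ft\Htwo\subset\ball_{1/2}(0)^c$, the truncation is invisible on the data, so $\uvel=\soloprvel_1(\Hone)+\soloprvel_2(\Htwo)$ together with the corresponding $\upres$ solves \eqref{ADN_LinTwoPhaseFlowProblem_Eq} and obeys \eqref{ADN_LinTwoPhaseFlowProblem_Est}; the claimed Fourier-support property follows at once from the factor $\cutoffkappa_{1/4}$ and the support of the data, and the mollification argument of Lemma~\ref{ADN_DirichletStokes} removes the Schwartz assumption.

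\textbf{The main obstacle.} Everything downstream of the symbol formula — the Marcinkiewicz bounds, the interpolation, and the removal of the truncation — is a direct transcription of the proof of Lemma~\ref{ADN_DirichletStokes}. The essential work lies in the middle step: computing the solution symbols $M_1,M_2,m_1,m_2$ by inverting the coupled interface system $\cala(\xip)$ and, above all, verifying that $\det\cala(\xip)\neq0$ for $\xip\neq0$, so that $\cala(\xip)^{-1}$ carries only the mild homogeneous singularity $\snorm{\xip}^{k}$ at the origin, which the cut-off $\cutoffkappa_{1/4}$ then renders harmless for the high-frequency data under consideration.
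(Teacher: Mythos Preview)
Your approach is correct in principle but takes a considerably more laborious route than the paper. You propose to write down the general ODE solution on each half-line, impose all five interface conditions simultaneously, and invert the resulting coupled $6\times 6$ system $\cala(\xip)$; you correctly identify the verification of $\det\cala(\xip)\neq 0$ (the Lopatinski--Shapiro condition) as the main obstacle, and then redo the entire Marcinkiewicz-plus-interpolation machinery of Lemma~\ref{ADN_DirichletStokes} for the new symbols $M_1,M_2,m_1,m_2$.

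The paper avoids all of this by observing that the Dirichlet solution constructed in Lemma~\ref{ADN_DirichletStokes} already has $\jump{\uvel}=0$ by construction, and that for that explicit solution the tangential stress jump is a simple invertible Fourier multiplier acting on the \emph{tangential} part $b_v$ of the Dirichlet datum alone:
\[
\projtang\jump{\fluidstress(\uvel,\upres)\nvec}
=\iFT_{\Rtwo}\Bb{\begin{pmatrix}-2\mu\bp{\snorm{\xip}\idmatrix+\tfrac{\xip\otimes\xip}{\snorm{\xip}}}\ft b_v\\ 0\end{pmatrix}}.
\]
One therefore simply sets $b_w\coloneqq\Hone$ and solves this $2\times 2$ symbol equation for $b_v$ in terms of $\ft\Htwo$; the inverse symbol is $-\tfrac{1}{2\mu\snorm{\xip}}\bp{\idmatrix-\tfrac{\xip\otimes\xip}{2\snorm{\xip}^2}}$, which after truncation by $\cutoffkappa_{1/4}$ is a bounded map $\WSR{1-1/r}{r}(\Rtwo)\to\WSR{2-1/r}{r}(\Rtwo)$. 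Lemma~\ref{ADN_DirichletStokes} then delivers $(\uvel,\upres)$ together with the estimate \eqref{ADN_DirichletStokes_Est}, and \eqref{ADN_LinTwoPhaseFlowProblem_Est} follows immediately. In short, the paper reduces Lemma~\ref{ADN_LinTwoPhaseFlowProblem} to Lemma~\ref{ADN_DirichletStokes} by a Dirichlet-to-Neumann inversion on the boundary, rather than solving the two-phase transmission problem from scratch; this sidesteps both the $6\times 6$ inversion and the repetition of the interpolation argument.
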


\begin{proof}
Put
\begin{align}\label{ADN_LinTwoPhaseFlowProblem_DefOfb}
b\coloneqq 
\begin{pmatrix}
b_v \\ b_w
\end{pmatrix}\coloneqq 
\iFT_{\R^2}\Bb{
\begin{pmatrix}
0 & -\frac{1}{2\mu\snorm{\xip}}\bp{\idmatrix-\frac{\xip\otimes\xip}{2\snorm{\xip}^2}} \\
1 & 0 
\end{pmatrix}
\begin{pmatrix}
\ft \Hone \\ \ft \Htwo 
\end{pmatrix}}.
\end{align}
Let $\cutoffkappa_{1/4}\in\CRci\np{\R^2}$ with $\kappa_{1/4}=0$ on $\ball_{1/4}(0)$ and
$\kappa_{1/4}=1$ on $\R^2\setminus\ball_{1/2}(0)$.
Clearly, the truncated operator
\begin{align*}
\calm:\SR\np{\Rtwo}^3\ra\SR\np{\Rtwo}^3,\quad
\calm\np{\phi}\coloneqq \iFT_\Rtwo\Bb{\kappa_{1/4}(\xip)\frac{-1}{2\snorm{\xip}}\Bp{\idmatrix-\frac{\xip\otimes\xip}{2\snorm{\xip}^2}}\ft{\phi}}
\end{align*}
corresponding to the Fourier multiplier appearing in \eqref{ADN_LinTwoPhaseFlowProblem_DefOfb} extends to a bounded operator
$\calm:\WSR{1-1/r}{r}(\Rtwo)^3\ra\WSR{2-1/r}{r}(\Rtwo)^3$. The assumption $\ft{\Htwo}\subset\R^2\setminus\ball_1(0)$
implies that $b_v=\calm\np{\Htwo}$. It follows that $b\in\WSR{2-1/r}{r}(\Rtwo)^3$, and we can therefore introduce the corresponding  solution
$(\uvel,\upres)\in\WSR{2}{r}\np{\Rdot^3}^3\times\WSR{1}{r}\np{\Rdot^3}$
to \eqref{ADN_DirichletStokes_Eq} from Lemma \ref{ADN_DirichletStokes}.
By construction, $\uvel \cdot \nvec =\Hone$ on $\partial\Rdot^3$. Moreover,
recalling \eqref{ADN_DirichletStokes_DefOfuvelupres} we compute 
\begin{align*}
\projtang\jump{\fluidstress(\uvel,\upres)\nvec}
= \iFT_\Rtwo \Bb{\begin{pmatrix}
-2\mu\bp{\snorm{\xip}\idmatrix+\frac{\xip\otimes\xip}{\snorm{\xip}}} \ft b_\vvel \\
0
\end{pmatrix}} = \Htwo.
\end{align*}
Consequently, $\np{\uvel,\upres}$ is a solution to \eqref{ADN_LinTwoPhaseFlowProblem_Eq}.
Employing \eqref{ADN_DirichletStokes_Est} 
we deduce
\begin{align*}
\norm{\uvel}_{2,r}+\norm{\upres}_{1,r} \leq \Cc{c}\,\norm{b}_{2-1/r,r}\leq \Cc{c}\,\np{\norm{\Hone}_{2-1/r,r}+\norm{\Htwo}_{1-1/r,r}}
\end{align*}
and conclude the lemma.
\end{proof}

\begin{thm}\label{ADN_TwoFoldHalfSpace}
Let $r\in(1,\infty)$ and  
\begin{align*}
f\in\LR{r}(\Rthree)^3, \ 
g\in\WSR{1}{r}(\Rdot^3), \
h_0\in\WSR{2-1/r}{r}(\Rtwo)^3, \
\hone\in\WSR{2-1/r}{r}(\Rtwo),\ 
\htwo\in\WSR{1-1/r}{r}(\Rtwo)^3.
\end{align*}
Then all solutions  
$(\uvel,\upres)\in\WSR{2}{r}(\Rdot^3)^3\times\WSR{1}{r}(\Rdot^3)$
to \eqref{SystemStokesTwofoldHalfSpace}
satisfy
\begin{align}\label{ADN_TwoFoldHalfSpace_Est}
\norm{\uvel}_{2,r}+\norm{\upres}_{1,r}
\leq \Cc[ADN_ConstantLpEstimatesDHspace]{C}
\bp{
\norm{f}_{r}+\norm{g}_{1,r}+\norm{h_0}_{2-1/r,r}+\norm{\hone}_{2-1/r,r}+\norm{\htwo}_{1-1/r,r}+\norm{\uvel}_{r}
},
\end{align}
where  $\const{ADN_ConstantLpEstimatesDHspace}
=\const{ADN_ConstantLpEstimatesDHspace}(r,k)>0$.
\end{thm}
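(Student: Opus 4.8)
The strategy is to prove the \emph{a priori} estimate \eqref{ADN_TwoFoldHalfSpace_Est} by decomposing a given solution $(\uvel,\upres)\in\WSR2r(\Rdot^3)^3\times\WSR1r(\Rdot^3)$ into its tangential low- and high-frequency parts, treating the high-frequency part with the solution operators of Lemma~\ref{ADN_DirichletStokes} and Lemma~\ref{ADN_LinTwoPhaseFlowProblem}, and absorbing the low-frequency part into the lower-order term $\norm{\uvel}_r$. A preliminary reduction first removes the interior data and the velocity jump. Using the classical $\LR r$ theory for the Stokes Dirichlet problem in the half spaces $\Rdot^3_\pm$ together with standard solvability of the divergence equation (see for example \cite{GaldiBookNew}), one subtracts from $(\uvel,\upres)$ a field of $\WSR2r\times\WSR1r$-norm at most $C\bp{\norm f_r+\norm g_{1,r}+\norm{h_0}_{2-1/r,r}}$ that vanishes on $\partial\Rdot^3_\pm$; since the auxiliary solution operators are Fourier multipliers in $x'$, they do not enlarge tangential frequency supports. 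This leaves a pair $(\wvel,\wpres)$ solving \eqref{SystemStokesTwofoldHalfSpace} with $f=g=h_0=0$, with $\hone$ unchanged and $\htwo$ altered only by a term controlled by the right-hand side of \eqref{ADN_TwoFoldHalfSpace_Est}, and with $\norm{\wvel}_r\leq\norm{\uvel}_r+C(\norm f_r+\norm g_{1,r}+\norm{h_0}_{2-1/r,r})$. Crucially, for $(\wvel,\wpres)$ the pressure $\wpres$ is now determined by $\wvel$ through $\mu\Delta\wvel=\grad\wpres$ and the decay at infinity.

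Next I would fix $\cutoff\in\CRi(\Rtwo)$ with $\cutoff\equiv0$ on $\ball_{1/2}(0)$ and $\cutoff\equiv1$ outside $\ball_1(0)$, and split $(\wvel,\wpres)=(\wvel_\#,\wpres_\#)+(\wvel_\flat,\wpres_\flat)$ into its high- and low-tangential-frequency parts by applying $\cutoff(D')$ and $1-\cutoff(D')$, the operators of Fourier multiplication by $\cutoff(\xip)$ and $1-\cutoff(\xip)$ in the tangential variables. Since every differential and boundary operator in \eqref{SystemStokesTwofoldHalfSpace} is constant-coefficient in $x'$, it commutes with $\cutoff(D')$; by the Marcinkiewicz Multiplier Theorem, $(\wvel_\#,\wpres_\#)$ then solves \eqref{SystemStokesTwofoldHalfSpace} with $f=g=h_0=0$ and data $\cutoff(D')\hone$, $\cutoff(D')\htwo$, which are supported in $\Rtwo\setminus\ball_{1/2}(0)$ and have norms bounded by those of $\hone$, $\htwo$, while $(\wvel_\flat,\wpres_\flat)$ solves the same homogeneous system with the complementary low-frequency data. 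For the high-frequency part, Lemma~\ref{ADN_LinTwoPhaseFlowProblem} produces a solution $(\overline{\wvel},\overline{\wpres})$ to this data with $\norm{\overline{\wvel}}_{2,r}+\norm{\overline{\wpres}}_{1,r}\leq C\bp{\norm{\hone}_{2-1/r,r}+\norm{\htwo}_{1-1/r,r}}$ whose tangential spectrum also avoids $\ball_{1/2}(0)$. The difference $(\wvel_\#-\overline{\wvel},\wpres_\#-\overline{\wpres})$ solves the homogeneous system with tangential spectrum in $\Rtwo\setminus\ball_{1/2}(0)$; applying $\FT_\Rtwo$, for each fixed $\xip\neq0$ it satisfies a homogeneous linear ordinary differential boundary value problem in $x_3$ whose only decaying solution is zero, the associated symbol being invertible for every $\xip\neq0$ — this follows from the explicit formulas underlying Lemmas~\ref{ADN_DirichletStokes}--\ref{ADN_LinTwoPhaseFlowProblem} and the scaling invariance of \eqref{SystemStokesTwofoldHalfSpace}. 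Hence $(\wvel_\#,\wpres_\#)=(\overline{\wvel},\overline{\wpres})$, and $\norm{\wvel_\#}_{2,r}+\norm{\wpres_\#}_{1,r}$ is controlled by the data.

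It remains to estimate the low-frequency part $(\wvel_\flat,\wpres_\flat)$, whose tangential Fourier support is contained in $\overline{\ball_1(0)}$. On this range the Fourier multiplier $\snorm{D'}$ with symbol $\xip\mapsto\snorm{\xip}$, and more generally every compactly supported smooth $\xip$-multiplier, is bounded on $\LR r(\Rtwo)$ (Bernstein's inequality), so all tangential derivatives of $\wvel_\flat$ and $\wpres_\flat$ are controlled by the respective $\LR r$-norms. For the normal derivatives, the restriction of $\wvel_\flat$ to $\Rdot^3_\pm$ is, componentwise, a decaying biharmonic function, hence satisfies $(\partial_3\pm\snorm{D'})^2\wvel_\flat=0$ there; thus $\partial_3^2\wvel_\flat$ is a bounded $\xip$-multiplier applied to $\partial_3\wvel_\flat$ plus one applied to $\wvel_\flat$. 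Inserting the interpolation inequality $\norm{\partial_3 z}_{\LR r(0,\infty)}\leq\epsilon\norm{\partial_3^2 z}_{\LR r(0,\infty)}+C_\epsilon\norm z_{\LR r(0,\infty)}$, applied to the $\LR r(\Rtwo)$-valued map $x_3\mapsto\wvel_\flat(\cdot,x_3)$, and absorbing, gives $\norm{\wvel_\flat}_{2,r}\leq C\norm{\wvel_\flat}_r\leq C\norm{\wvel}_r$. The pressure is recovered from $\grad\wpres_\flat=\mu\Delta\wvel_\flat$ together with the fact that $\wpres_\flat$ is harmonic, decaying and tangentially low-frequency on $\Rdot^3_\pm$, yielding $\norm{\wpres_\flat}_{1,r}\leq C\norm{\wvel}_r$. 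Summing the estimates for the two parts and undoing the preliminary reduction produces \eqref{ADN_TwoFoldHalfSpace_Est} with a constant depending only on $r$.

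The main obstacle is twofold. On the one hand, because \eqref{SystemStokesTwofoldHalfSpace} is strongly coupled it neither belongs to the Agmon--Douglis--Nirenberg class of \cite{ADN2} nor decouples into two scalar half-space problems; overcoming this is exactly what Lemma~\ref{ADN_DirichletStokes} and Lemma~\ref{ADN_LinTwoPhaseFlowProblem} accomplish in the high-frequency regime. On the other hand, and more delicately, the solution operator of the homogeneous two-phase Stokes system fails to be bounded as $\xip\to0$, so the full $\WSR2r\times\WSR1r$ norm of the low-frequency component can be controlled by its $\LR r$ norm only by exploiting the decay of the solution at spatial infinity — which is precisely why the lower-order term $\norm{\uvel}_r$ in \eqref{ADN_TwoFoldHalfSpace_Est} cannot be removed.
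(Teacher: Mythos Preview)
Your overall architecture (tangential frequency split, high-frequency via Lemmas~\ref{ADN_DirichletStokes}--\ref{ADN_LinTwoPhaseFlowProblem}, low-frequency absorbed into $\norm{\uvel}_r$) matches the paper's, but the \emph{order} of operations introduces a genuine gap. In your preliminary reduction you claim to subtract a field in $\WSR{2}{r}(\Rdot^3)\times\WSR{1}{r}(\Rdot^3)$ with norm controlled by $\norm{f}_r+\norm{g}_{1,r}+\norm{h_0}_{2-1/r,r}$, produced by the half-space Stokes Dirichlet solution operator and a divergence lifting. But neither of these operators is bounded into the \emph{inhomogeneous} spaces: the Stokes solution in $\Rdot^3_\pm$ with data $f\in\LR{r}$ lies only in $\WSRhom{2}{r}$, not $\WSR{2}{r}$, and likewise a divergence lifting of $g\in\WSR{1}{r}(\Rdot^3_\pm)$ does not yield a field with $\LR{r}$-control on the function itself. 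The obstruction is precisely the singularity at $\xi=0$ of the Stokes symbol. (There is also a smaller inconsistency: a subtracted field that ``vanishes on $\partial\Rdot^3_\pm$'' cannot alter $\jump{\uvel}$, so $h_0$ would not be removed.)

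The paper avoids this by reversing the order: the frequency cut $\cutoffkappa_1(\xip)$ is applied \emph{first} to the full solution and to \emph{all} the data, and only then are the high-frequency data $\fhf,\ghf,\hzerohf,\honehf,\htwohf$ lifted in sequence (whole-space Stokes for $\fhf,\ghf$, then Lemma~\ref{ADN_DirichletStokes} for the jump, then Lemma~\ref{ADN_LinTwoPhaseFlowProblem} for $\honehf,\htwohf$). Because each lifting acts on functions whose tangential Fourier support avoids a neighbourhood of the origin, the offending symbols (e.g.\ $\xi/\snorm{\xi}^2$) become bounded multipliers and the liftings genuinely land in $\WSR{2}{r}\times\WSR{1}{r}$. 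Uniqueness of the high-frequency solution is then obtained by a duality argument rather than by the pointwise ODE reasoning you sketch. For the low-frequency part, the paper does not reduce to homogeneous interior data at all; instead it keeps $f_\bot,g_\bot$ in the equations and reads off $\partial_3^2\uvellf$ and $\partial_3\upreslf$ directly from \eqrefsub{SystemStokesTwofoldHalfSpace}{1,2}, which is more elementary than your biharmonic/interpolation route and does not require first eliminating $f$ and $g$.
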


\begin{proof}
We decompose both the solution and the data into
one part with lower and another part with higher frequency support in tangential directions $e_1,e_2$. For this purpose, we introduce cut-off functions $\cutoffkappa_\alpha\in\CRci\np{\R^2}$ with $\kappa_\alpha=0$ on $\ball_\alpha(0)$ and
$\kappa_\alpha=1$ on $\R^2\setminus\ball_{2\alpha}(0)$, and put 
\begin{align*}
\uvelhf(x',x_3)\coloneqq \iFT_{\R^2}\bb{\cutoffkappa_1(\xip)\FT_\Rtwo\nb{\uvel(\cdot,x_3)}}(x')\in\WSR{2}{r}(\Rdot^3)^3,\qquad
\uvellf\coloneqq \uvel-\uvelhf.
\end{align*} 
Similarly, we introduce $\upreshf,\upreslf$ and $\fhf,\ghf,\hzerohf,\honehf,\htwohf$.
Observe that $\np{\uvelhf,\upreshf}$ solves \eqref{SystemStokesTwofoldHalfSpace} with respect to data
$\np{\fhf,\ghf,\hzerohf,\honehf,\htwohf}$. We shall construct another solution satisfying estimate \eqref{ADN_TwoFoldHalfSpace_Est}, and subsequently show that it coincides with $\np{\uvelhf,\upreshf}$. To this end, we let
$\ghf^+\in\WSR{1}{r}\np{\R^3}$ denote an extension of $\restriction{\ghf}{\R^3_+}$ to $\WSR{1}{r}\np{\R^3}$.
Specifically employing Heesten's extension operator (see for example \cite[Theorem 4.26]{adams:sobolevspaces})
one readily verifies that the extension retains the property that the Fourier transform (in tangential directions)
$\FT_\Rtwo\bb{\ghf^+(\cdot,x_3)}(\xip)$ is supported away from  $(\xip,x_3)\in\B_1(0)\times\R$. Consequently, 
\begin{align*}
\liftgvel^+\coloneqq \iFT_\Rthree\Bb{\frac{-i\xi}{\snorm{\xi}^2}\FT_\Rthree\bb{\ghf^+}}\in\WSR{2}{r}\np{\Rthree}^3
\end{align*}
is well defined. Similarly, we introduce an extension of $\restriction{\ghf}{\R^3_-}$ to $\WSR{1}{r}\np{\R^3}$ and
construct a field $\liftgvel^-\in\WSR{2}{r}\np{\Rthree}^3$ as above. Letting
\begin{align*}
\liftgvel\coloneqq \begin{pdeq}
&\liftgvel^+ &&\tin\R^3_+,\\
&\liftgvel^- &&\tin\R^3_-,
\end{pdeq}
\end{align*}
we then obtain a field $\liftgvel\in\WSR{2}{r}\np{\Rdot^3}^3$ with $\Div\liftgvel=\ghf$ in $\Rdot^3$. Moreover, a straight-forward application
of Marcinkiewicz's Multiplier Theorem (see for example \cite[Corollary 5.2.5]{Grafakos1}) yields
\begin{align*}
\norm{\liftgvel}_{2,r}\leq\Cc{c}\norm{g}_{1,r}.
\end{align*}
Now put 
\begin{align*}
&\liftfvel \coloneqq  \iFT_\Rthree\Bb{\frac{1}{\snorm{\xi}^2}\Bp{\idmatrix-\frac{\xi\otimes\xi}{\snorm{\xi}^2}}\FT_\Rthree\bb{\fhf-\Div \symmgrad\np{\liftgvel}}},\\
&\liftfpres\coloneqq \iFT_\Rthree\Bb{\frac{\xi}{\snorm{\xi}^2}\cdot\FT_\Rthree\bb{\fhf - \Div \symmgrad\np{\liftgvel}}}.
\end{align*}
Owing to the fact that $\liftgvel,\Div \symmgrad\np{\liftgvel}\in\LR{r}\np{\Rthree}^3$ with
$\FT_\Rthree\nb{\liftgvel}$ and $\FT_\Rthree\nb{\Div \symmgrad\np{\liftgvel}}$ supported away from 0, the expressions above are well defined and yield
functions with $\liftfvel\in\WSR{2}{r}\np{\Rthree}^3$ and $\liftfpres\in\WSR{1}{r}\np\Rthree$ satisfying
\begin{align*}
\begin{pdeq}
\Div \fluidstress(\liftfvel,\liftfpres) &=\fhf - \Div \symmgrad\np{\liftgvel} & \text{in } \Rthree , \\
\Div \liftfvel &= 0 & \text{in } \Rthree.
\end{pdeq}
\end{align*}
Moreover, another straight-forward application of Marcinkiewicz's Multiplier Theorem yields
\begin{align*}
\norm{\liftfvel}_{2,r} + \norm{\liftfpres}_{1,r}
\leq \Cc{c} \bp{\norm{\fhf}_{r}+\norm{\Div \symmgrad\np{\liftgvel}}_r}
\leq \Cc{c} \bp{\norm{\fhf}_{r}+\norm{\ghf}_{1,r}}.
\end{align*}
Utilizing Lemma \ref{ADN_DirichletStokes}, we construct a solution $\np{\lifthzerovel,\lifthzeropres}\in\WSR{2}{r}\np{\Rdot^3}^3\times\WSR{1}{r}\np{\Rdot^3}$ to 
\begin{align*}
\begin{pdeq}
\Div \fluidstress(\lifthzerovel,\lifthzeropres) &=0 && \text{in } \Rdot^3 , \\
\Div \lifthzerovel &= 0 && \text{in } \Rdot^3, \\
\jump{\lifthzerovel}&= \hzerohf-\jump{\liftfvel}-\jump{\liftgvel} &&\text{on } \partial\Rdot^3
\end{pdeq}
\end{align*}
satisfying
\begin{align*}
\norm{\lifthzerovel}_{2,r}+\norm{\lifthzeropres}_{1,r}\leq \Cc{c}\bp{\norm{\hzerohf}_{2-1/r,r}+\norm{\liftfvel}_{2-1/r,r}+\norm{\liftgvel}_{2-1/r,r}}.
\end{align*} 
Finally, by Lemma \ref{ADN_LinTwoPhaseFlowProblem} there is a solution
$\np{\lifthtwotwovel,\lifthonetwopres}\in\WSR{2}{r}\np{\Rdot^3}^3\times\WSR{1}{r}\np{\Rdot^3}$ to
\begin{align*}
\begin{pdeq}
\Div \fluidstress(\lifthtwotwovel,\lifthonetwopres) &=0 && \text{in } \Rdot^3 , \\
\Div \lifthtwotwovel &= 0 && \text{in } \Rdot^3, \\
\jump{\lifthtwotwovel}&=0  &&\text{on } \partial\Rdot^3, \\
\lifthtwotwovel \cdot \nvec &=\honehf-\bp{\lifthzerovel+\liftfvel+\liftgvel}\cdot\nvec &&\text{on } \partial\Rdot^3,\\
\projtang \jump{\fluidstress(\lifthtwotwovel,\lifthonetwopres)\nvec}&=
\htwohf- \projtang \jump{\fluidstress(\lifthzerovel+\liftfvel+\liftgvel,\lifthzeropres+\liftfpres)\nvec}
&& \text{on }\partial\Rdot^3,\\
\end{pdeq}
\end{align*}
which obeys
\begin{align*}
\norm{\lifthtwotwovel}_{2,r}+\norm{\lifthonetwopres}_{1,r}
&\leq \Cc{c}\bp{\norm{\honehf}_{2-1/r,r}+ \norm{\htwohf}_{1-1/r,r}
+\norm{\lifthzerovel}_{2-1/r,r}+\norm{\lifthzeropres}_{1-1/r,r}\\
&\qquad +\norm{\liftfvel}_{2-1/r,r}+\norm{\liftfpres}_{1-1/r,r}+\norm{\liftgvel}_{2-1/r,r}}.
\end{align*}
It follows that
\begin{align*}
\Uvelhf \coloneqq  \lifthtwotwovel+\lifthzerovel+\liftfvel+\liftgvel,\qquad
\Upreshf \coloneqq  \lifthonetwopres+\lifthzeropres+\liftfpres
\end{align*}
is a solution to \eqref{SystemStokesTwofoldHalfSpace} with 
$\np{\fhf,\ghf,\hzerohf,\honehf,\htwohf}$ as the right-hand side,
and that  $\np{\Uvelhf,\Upreshf}\in\WSR{2}{r}\np{\Rdot^3}^3\times\WSR{1}{r}\np{\Rdot^3}$ satisfies
\begin{align}\label{ADN_TwoFoldHalfSpace_EstOfHF}
\norm{\Uvelhf}_{2,r}+\norm{\Upreshf}_{1,r}&\leq \Cc{c}\bp{
\norm{\fhf}_r+\norm{\ghf}_{1,r}+\norm{\hzerohf}_{2-1/r,r}+
\norm{\honehf}_{2-1/r,r}+\norm{\htwohf}_{1-1/r,r}
}.
\end{align}
Consequently, $\np{\Uvelhf,\Upreshf}$ and $\np{\uvelhf,\upreshf}$ solve the same equations. Using a classical duality argument,
we shall show that they coincide. To this end, let $\phi\in\CRci(\R^3)$ and put
$\phihf\coloneqq \iFT_\Rtwo\bb{\cutoffkappa_{1/4}(\xip)\FT_\Rtwo\nb{\phi(\cdot,x_3)}}$.
Employing the same procedure as above, we construct a solution 
$\np{\szvelhf,\szpreshf}\in\WSR{2}{r'}(\Rdot^3)^3\times\WSR{1}{r'}(\Rdot^3)$ to \eqref{SystemStokesTwofoldHalfSpace} with 
right-hand side $\np{\phihf,0,0,0,0}$.
Since by construction both
$\FT_\Rtwo\bb{\Uvelhf(\cdot,x_3)}(\xip)$ and
$\FT_\Rtwo\bb{\uvelhf(\cdot,x_3)}(\xip)$ 
are supported away from  $(\xip,x_3)\in\B_{1/2}(0)\times\R$,
we compute
\begin{align*}
\int_{\Rthree} (\uvelhf-\Uvelhf)\cdot\phi\,\dx &= 
\int_{\Rthree} (\uvelhf-\Uvelhf)\cdot\phihf\,\dx \\
&=\int_{\Rdot^3} (\uvelhf-\Uvelhf)\cdot {\Div \fluidstress(\szvelhf,\szpreshf)}\,\dx\\
&=\int_{\Rdot^3} \Div\fluidstress(\uvelhf-\Uvelhf,\upreshf-\Upreshf)\cdot \szvelhf\,\dx
=0.
\end{align*}
Since $\phi$ can be taken arbitrarily, we obtain $\uvelhf=\Uvelhf$, and
in turn from \eqref{SystemStokesTwofoldHalfSpace} also $\upreshf=\Upreshf$.
It follows that also $\np{\uvelhf,\upreshf}$ satisfies \eqref{ADN_TwoFoldHalfSpace_EstOfHF} and thus
\begin{align}\label{ADN_TwoFoldHalfSpaceEstOfHFPart}
\norm{\uvelhf}_{2,r}+\norm{\upreshf}_{1,r}&\leq \Cc{c}\bp{
\norm{f}_r+\norm{g}_{1,r}+\norm{h_0}_{2-1/r,r}+\norm{\hone}_{2-1/r,r}+\norm{\htwo}_{1-1/r,r}
}.
\end{align}
Finally, from 
$\FT_\Rtwo\bb{\uvellf(\cdot,x_3)}(\xip)\subset\ball_1(0)\times\R$ and
$\FT_\Rtwo\bb{\upreslf(\cdot,x_3)}(\xip)\subset\ball_1(0)\times\R$
it follows via the Marcinkiewicz Multiplier Theorem
that
$\norm{\grad_{x'}\grad\uvellf}_r+\norm{\grad_{x'}\upreslf}\leq \Cc{c}\,\norm{\uvellf}_{r}$.
Introducing the decomposition $\uvel=\uvelhf+\uvellf$ in \eqref{SystemStokesTwofoldHalfSpace} and isolating $\partial_3\uvelthreelf$
on the left-hand side in \eqrefsub{SystemStokesTwofoldHalfSpace}{2}, we then infer after differentiation that
$\norm{\partial_3^2\uvelthreelf}_r\leq \Cc{c}\norm{\uvellf}_{r}$. Subsequently isolating $\partial_3\upreslf$ in 
the third coordinate equation of \eqrefsub{SystemStokesTwofoldHalfSpace}{1}, we deduce $\norm{\partial_3\upreslf}_{1,r}\leq\Cc{c}\norm{\uvellf}_{r}$. Lastly isolating
$\partial_3^2\uvelonelf$ and $\partial_3^2\uveltwolf$ in the first and second coordinate equation of \eqrefsub{SystemStokesTwofoldHalfSpace}{1}, respectively, we further deduce
$\norm{\partial_3^2\uvelonelf}_r+\norm{\partial_3^2\uveltwolf}_r\leq \Cc{c}\norm{\uvellf}_{r}$.
In conclusion, 
\begin{align}\label{ADN_TwoFoldHalfSpaceEstOfLFPart}
\norm{\uvellf}_{2,r}+\norm{\grad\upreslf}_r\leq\Cc{c}\norm{\uvellf}_{r}.
\end{align}
Combining \eqref{ADN_TwoFoldHalfSpaceEstOfHFPart} and \eqref{ADN_TwoFoldHalfSpaceEstOfLFPart} we conclude \eqref{ADN_TwoFoldHalfSpace_Est} and thus the theorem.
\end{proof}

\subsection{A priori estimates for strong solutions}

We return to the linearized two-phase-flow Navier--Stokes problem \eqref{SystemOseenOmegaGeneral}, where 
$\Omega$ is a domain of the same type as in Section \ref{Preliminaries_Section}, \ie,
satisfying \eqref{Preliminaries_DomainTypeDef}. Based on the estimates obtained in the twofold-half-space case
in Theorem \ref{ADN_TwoFoldHalfSpace}, we shall establish 
$\LR{r}$ estimates 
of solutions to \eqref{SystemOseenOmegaGeneral}.
The Oseen case $(\Rey\neq 0)$ and  Stokes case $(\Rey= 0)$ are treated separately in Theorem \ref{ThmOseenOmegaHom} and
Theorem \ref{ThmStokesProblemOmega}, respectively.

\begin{thm}\label{ThmOseenOmegaHom}
Let $\Gamma$ be a $\CR{5}$-smooth surface, $q\in(1,\frac{3}{2})$, $r\in(3,\infty)$ and $\reybar>0$. 
For every $0<\Rey\leq\reybar$ and 
$\np{f,g,\hone,\htwo}\in\Yspace_1\times\Yspace_{2,3}\times\Yspace_4$
there exists a unique solution 
$(\uvel,\upres)\in\XreyOne\times\Xspace_2$
to \eqref{SystemOseenOmegaGeneral}
satisfying 
\begin{align}\label{EqNormalizePressure}
\int_{\Omega^\inner} \upres^\inner \,\dx=0.
\end{align}
Moreover,
\begin{align}\label{EstimateOseenOmegaHom}
\norm{\uvel}_{\XreyOne}+\norm{\upres}_{\Xspace_2}
\leq \Cc[ConstOseenOmegaHom]{C}
\norm{(f,g,\hone,\htwo)}_{\Yspace_1\times\Yspace_{2,3}\times\Yspace_4},
\end{align}
where 
$\const{ConstOseenOmegaHom}=\const{ConstOseenOmegaHom}(\Omega,q,r,\reybar)>0$.
\end{thm}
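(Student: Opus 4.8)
I would first establish the a priori estimate \eqref{EstimateOseenOmegaHom} for any solution already known to lie in $\XreyOne\times\Xspace_2$, then deduce existence from the $\LR{2}$ weak-solution theory (Theorem \ref{ThmOseenWeakSolutions} and Theorem \ref{PropAssPressure}) combined with higher regularity and an approximation argument, and finally read off uniqueness from the estimate together with Theorem \ref{PropUniquenessWeakSolutionGeneral}. The estimate is thus proved first and then used inside the existence proof.

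\textbf{The a priori estimate.} Fix $R$ with $\overline{\Omega^\inner}\subset\ball_R$ and choose a finite cover of $\R^3$ with a subordinate smooth partition of unity \set{\psi_j}: balls $B_j$ centered on $\Gamma$, so small that $\Gamma\cap B_j$ is (after rotation) the graph of a $\CR{5}$ function of small gradient; bounded balls with closure inside $\Omega^\inner$ or inside $\Omega^\outer$; and one exterior patch $\ball^{3R/2}$. For a solution $\np{\uvel,\upres}\in\XreyOne\times\Xspace_2$ of \eqref{SystemOseenOmegaGeneral} the localized pair $\np{\psi_j\uvel,\psi_j\upres}$ solves, on the corresponding model configuration, the same system up to a right-hand side perturbed only by commutator terms involving $\grad\psi_j$ (of lower order, supported where $\grad\psi_j\neq0$) and, on the flattened interface patches, by the frozen-coefficient error and the term $\Rey\,\partial_3\uvel$, which — after shrinking $B_j$ and using $0<\Rey\leq\reybar$ — amount to a small multiple of $\norm{\psi_j\uvel}_{2,r}$ plus lower-order norms; note $\jump{\psi_j\uvel}=0$ is preserved since $\psi_j$ is continuous across $\Gamma$. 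On the interface patches I would invoke Theorem \ref{ADN_TwoFoldHalfSpace} — the one place where the strong coupling is genuinely resolved, and which cannot be replaced by two one-phase estimates — on the bounded interior and exterior patches the classical $\LR{r}$ theory for the Stokes system (with $\Rey\,\partial_3\uvel$ a lower-order term), and on $\ball^{3R/2}$ the whole-space/exterior Oseen estimate \cite[Ch.~VII.4, VII.7]{GaldiBookNew}, which already supplies the full $\oseennorm{\cdot}{\Rey}$-norm with constant uniform for $0<\Rey\leq\reybar$. Summing over $j$ and absorbing the small terms gives
\begin{align*}
\norm{\uvel}_{\XreyOne}+\norm{\upres}_{\Xspace_2}
\leq \Cc{c}\,\Bp{\norm{\np{f,g,\hone,\htwo}}_{\Yspace_1\times\Yspace_{2,3}\times\Yspace_4}
+\norm{\uvel}_{r,\ball_{2R}}+\norm{\upres}_{r,\ball_{2R}}},
\end{align*}
with a constant independent of $\Rey\in(0,\reybar]$. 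To remove the last two terms I would argue by contradiction: if no such estimate holds, pick $\Rey_k\in(0,\reybar]$ and solutions $\np{\uvel_k,\upres_k}$ of unit norm (with $\Rey=\Rey_k$) whose data tend to zero; by the displayed bound and the compact embeddings of $\WSR{2}{r}\np{\ball_{2R}}$ and $\WSR{1}{r}\np{\ball_{2R}}$ into $\LR{r}\np{\ball_{2R}}$, a subsequence converges in $\LR{r}\np{\ball_{2R}}$ and weakly globally, with $\Rey_k\to\Rey_\infty\in[0,\reybar]$, to a solution of the homogeneous system with $\uvel\in\LR{q}\cap\LR{r}$ and $\upres\in\LR{r}$; Theorem \ref{PropUniquenessWeakSolutionGeneral} (valid also at $\Rey_\infty=0$) forces $\uvel=0$, hence $\upres$ constant and, by \eqref{EqNormalizePressure}, $\upres=0$, whence $\norm{\uvel_k}_{r,\ball_{2R}}+\norm{\upres_k}_{r,\ball_{2R}}\to0$ and, by the bound again, the unit norms tend to zero — a contradiction. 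This yields \eqref{EstimateOseenOmegaHom}, and applying it to the difference of two solutions gives uniqueness.

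\textbf{Existence.} For data $\np{f,g,\hone,\htwo}$ that are smooth, compactly supported and compatible, Theorem \ref{ThmOseenWeakSolutions} and Theorem \ref{PropAssPressure} produce a weak solution with $\int_{\Omega^\inner}\upres=0$; Theorem \ref{ThmHigherRegularityWeakSolutions} together with the interior/boundary $\LR{r}$ regularity of the Stokes system makes it smooth, while the exterior-domain Oseen theory \cite[Thm.~VII.6.2, VII.7.1]{GaldiBookNew} gives its exterior part the Oseen decay rate, so in fact $\np{\uvel,\upres}\in\XreyOne\times\Xspace_2$ and \eqref{EstimateOseenOmegaHom} applies to it. For general $\np{f,g,\hone,\htwo}\in\Yspace_1\times\Yspace_{2,3}\times\Yspace_4$ I would approximate in that norm by smooth compactly supported data respecting the constraint defining $\Yspace_{2,3}$; by \eqref{EstimateOseenOmegaHom} the corresponding solutions form a Cauchy sequence in $\XreyOne\times\Xspace_2$, and its limit is the desired solution, which still obeys \eqref{EqNormalizePressure} by continuity.

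\textbf{Main obstacle.} The heart of the matter is the localization in the a priori estimate: fusing the strongly coupled interface estimate of Theorem \ref{ADN_TwoFoldHalfSpace} with the exterior Oseen estimate while keeping every $\Rey$-weight in $\oseennorm{\cdot}{\Rey}$ consistent and the constant uniform as $\Rey\to0^+$, and carefully tracking the normalized pressure across overlapping patches. The subsequent compactness–uniqueness step depends decisively on having uniqueness available in the broad class $\LR{q}\times\LR{r}$, which is exactly why Theorem \ref{PropUniquenessWeakSolutionGeneral} was stated with such weak summability hypotheses.
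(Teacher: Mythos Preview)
Your proposal is correct and follows essentially the same strategy as the paper: localization via a partition of unity, with Theorem \ref{ADN_TwoFoldHalfSpace} handling the interface patches and the whole-space Oseen theory \cite[Thm.~VII.4.1]{GaldiBookNew} handling the exterior cut-off, followed by a compactness--uniqueness contradiction argument resting on Theorem \ref{PropUniquenessWeakSolutionGeneral}, and finally a density argument for general data. The only differences are organizational: the paper first constructs the solution for smooth compactly supported data and derives the estimate \emph{for that particular solution}, whereas you phrase the estimate as an abstract a priori bound and invoke it afterwards; and the paper records the lower-order remainder as $\norm{\uvel}_{\WSR{1}{r}(\Omega_{2R})}+\norm{\upres}_{\LR{r}(\Omega_{2R})}$ (first order in $\uvel$, matching the commutators), while you write $\norm{\uvel}_{r,\ball_{2R}}$, which requires an implicit interpolation step to absorb $\norm{\grad\uvel}_{r,\ball_{2R}}$.
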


\begin{proof}
We first consider data
\begin{align}\label{ThmOseenOmegaHom_RegularData}
\begin{aligned}
&\np{f,g,\hone,\htwo}\in \CRi(\Omega)\times\CRi\np{\Omega}\times\CR{5}\np{\Gamma}\times\CR{5}\np{\Gamma},\\
&\supp f \text{ and } \supp g \text{ compact in }\R^3,\\
&\int_{\Omega^\inner} g\,\dx = \int_\Gamma \hone\,\dS,
\end{aligned}
\end{align}
so that the theorems from Section \ref{WeakSolutions} can be applied.
Recalling the regularity of $\Gamma$, Theorem \ref{ThmOseenWeakSolutions}, Theorem \ref{PropAssPressure} and Theorem \ref{ThmHigherRegularityWeakSolutions}
yield a solution $\np{\uvel,\upres}\in\DSRN{1}{2}\np{\R^3}^3\times\LRN{2}\np{\R^3}$ to \eqref{SystemOseenOmegaGeneral} satisfying
\begin{align*}
\uvel\in\bigcap_{\ell=0}^2\WSRhom{\ell+2}{2}(\Omega),\qquad 
\upres\in\bigcap_{\ell=0}^2\WSRhom{\ell+1}{2}(\Omega).
\end{align*}
We fix an $R>\delta(\Omega)$ and observe that  
$\np{\uvel,\upres}\in\WSR{2}{r}\np{\Omega_{2R}}^3\times\WSR{1}{r}\np{\Omega_{2R}}$ by Sobolev embedding.
According to the regularity assumptions, $\Gamma$ can be covered by a finite number of
balls $\Gamma\subset\bigcup_{i=1}^m \ballrixi$ each
of which upon a rotation $\rotmatrixi$ can be mapped to $\ballri$ by a $\CR{5}$-diffeomorphism $\Phi_i$, that is,  $\Phi_i\circ\rotmatrixi:\ballrixi\ra\ballri$, in such a way
that $\Phi_i\circ\rotmatrixi\bp{\Gamma\cap\ballrixi}=\setc{x\in\ballri}{x_3=0}$
and with $\norm{\grad\Phi_i}_\infty$ arbitrarily small for sufficiently small radii $r_i$, $i=1,\ldots,m$. The covering can clearly be augmented
with bounded open sets $O_1\subset\subset\Omega^\inner$ and $O_2\subset\subset\Omega^\outer_{2R}$ so that
$\overline{\Omega_R}\subset\cup_{i=1}^m\ballrixi\cup O_1 \cup O_2$. Employing a partition of unity subordinate to such a covering,
we can decompose and transform the solution $\np{\uvel,\upres}$ into $m$ solutions $(\uvel_i,\upres_i)\in\WSR{2}{r}(\Rdot^3)^3\times\WSR{1}{r}(\Rdot^3)$, $i=1,\ldots,m$,
to the twofold half-space Stokes problem \eqref{SystemStokesTwofoldHalfSpace},
two solutions $\np{\uvel_{m+1},\upres_{m+1}},\np{\uvel_{m+2},\upres_{m+2}}\in\WSR{2}{r}(\R^3)^3\times\WSR{1}{r}(\R^3)$ to a whole-space Stokes problem, and finally one solution
$\np{\wvel,\wpres}\in\DSRN{1}{2}\np{\R^3}^3\times\LRN{2}\np{\R^3}$ to the whole-space Oseen problem
\begin{align}\label{ThmOseenOmegaHom_WholeSpaceOseen}
\begin{pdeq}
-\Div \fluidstress(\wvel,\wpres) + \Rey \partial_3 \wvel&=F && \text{in } \R^3, \\
\Div \wvel &= G && \text{in } \R^3.
\end{pdeq}
\end{align}
In all three cases, the data contain
lower-order terms of $\uvel$ and $\upres$ supported in $\ball_{2R}$. Furthermore, the  data in the twofold half-space Stokes equations satisfied by $(\uvel_i,\upres_i)$, $i=1,\ldots,m$, also 
contain higher-order terms of $\uvel$ and $\upres$ supported in $\ball_{2R}$ and multiplied with components of $\grad\Phi_i$.  
By Sobolev embeddings, we have $\wvel\in\DSRN{1}{2}\np{\R^3}\embeds\LR{6}\np{\Rthree}$, and it is therefore easy to verify, for example by applying the Fourier transform in \eqref{ThmOseenOmegaHom_WholeSpaceOseen},
that $(\wvel,\wpres)$ coincides with the solution from \cite[Theorem VII.4.1]{GaldiBookNew} and therefore satisfies
\begin{align}\label{ThmOseenOmegaHom_WholeSpaceOseenEst}
\begin{aligned}
\norm{\wvel}_\XreyOne+ \norm{\wpres}_{\Xspace_2}
&\leq \Cc{c}\bp{\norm{F}_{\LR{q}\np{\Rthree}\cap\LR{r}\np{\Rthree}}+\norm{G}_{\DSR{1}{q}(\Rthree)\cap\LR{{3q}/{3-q}}(\Rthree)\cap\DSR{1}{r}(\Rthree)}}\\
&\leq \Cc{c}\bp{\norm{f}_{\Yspace_1}+\norm{g}_{\Yspace_2}
+\norm{\uvel}_{\WSR{1}{r}(\Omega_{2R})} + \norm{\upres}_{\LR{r}(\Omega_{2R})}
}
\end{aligned}
\end{align}
with a constant $\Cclast{c}=\Cclast{c}(q,r,\reybar)$ independent of $\Rey$.
A similar estimate is satisfied by the solutions $\np{\uvel_{m+1},\upres_{m+1}}$ and $\np{\uvel_{m+2},\upres_{m+2}}$ to the whole-space Stokes problems by \cite[Theorem IV.2.1]{GaldiBookNew}.
Moreover, Theorem \ref{ADN_TwoFoldHalfSpace} implies that $(\uvel_i,\upres_i)$, $i=1,\ldots,m$, also satisfies the estimate, provided
a covering is chosen with $\norm{\grad\Phi_i}_\infty$ 
sufficiently small in relation to $\const{ADN_ConstantLpEstimatesDHspace}$ so that the higher-order terms can be absorbed on the left-hand side.
We thus conclude
\begin{align}\label{ThmOseenOmegaHom_EstWithLowerOrderTerms}
\norm{\uvel}_{\XreyOne}+\norm{\upres}_{\Xspace_2}
\leq
\Cc{c}\bp{\norm{(f,g,\hone,\htwo)}_{\Yspace_1\times\Yspace_{2,3}\times\Yspace_4}+\norm{\uvel}_{\WSR{1}{r}(\Omega_{2R})} + \norm{\upres}_{\LR{r}(\Omega_{2R})}},
\end{align}
where 
$\Cclast{c}=\Cclast{c}(\Gamma,q,r,\reybar)>0$.
It remains to show that the lower-order terms of $\uvel$ and $\upres$ on the right-hand side can be neglected.
This can be achieved by a standard contradiction argument. Assuming that
\begin{multline}\label{ThmOseenOmegaHom_EstOfLowerOrderTerms}
\exists c>0\ \forall 0<\snorm{\Rey}<\reybar\ \forall \text{solutions } (\uvel,\upres)\in\XreyOne\times\Xspace_2
\text{w.r.t. data }\eqref{ThmOseenOmegaHom_RegularData}:\\
\norm{\uvel}_{\WSR{1}{r}(\Omega_{2R})} + \norm{\upres}_{\LR{r}(\Omega_{2R})} \leq c \norm{(f,g,\hone,\htwo)}_{\Yspace_1\times\Yspace_{2,3}\times\Yspace_4}
\end{multline}
does \emph{not} hold, one can utilize \eqref{ThmOseenOmegaHom_EstWithLowerOrderTerms} to construct a sequence $(\rey_n,\uvel_n,\upres_n)$ normalized such that
$\norm{\uvel_n}_{\WSR{1}{r}(\Omega_{2R})} + \norm{\upres_n}_{\LR{r}(\Omega_{2R})}=1$
and with $\rey_n\ra\rey$
and $(\uvel_n,\upres_n)$ weakly convergent in the Banach space 
\begin{align*}
\DSR{2}{r}(\Omega)\cap\DSR{2}{q}(\Omega)\cap\LR{\frac{3q}{3-2q}}(\Omega)\times\DSR{1}{q}(\Omega)\cap\LR{\frac{3q}{3-q}}(\Omega)
\end{align*}
to a solution $(\uvel,\upres)$ to \eqref{SystemOseenOmegaGeneral} with parameter $\rey\in[0,\reybar]$ and homogeneous right-hand side.
The restriction $q<\frac{3}{2}$ is critical in this step.
Theorem \ref{PropUniquenessWeakSolutionGeneral} implies $\np{\uvel,\upres}=(0,0)$, contradicting 
$\norm{\uvel}_{\WSR{1}{r}(\Omega_{2R})} + \norm{\upres}_{\LR{r}(\Omega_{2R})}=1$ obtained
due to the compactness of the embeddings 
$\DSR{2}{r}(\Omega)\cap\LR{\frac{3q}{3-2q}}(\Omega)\embeds\WSR{1}{r}(\Omega_{2R})$
and 
$\DSR{1}{r}(\Omega)\cap\LR{\frac{3q}{3-q}}(\Omega)\embeds\LR{r}(\Omega_{2R})$.
We conclude \eqref{ThmOseenOmegaHom_EstOfLowerOrderTerms}. Therefore,
the lower-order terms of $\uvel$ and $\upres$ on the right-hand side in \eqref{ThmOseenOmegaHom_EstWithLowerOrderTerms} can be neglected, which yields
\eqref{EstimateOseenOmegaHom}. Uniqueness of the solution follows from Theorem \ref{PropUniquenessWeakSolutionGeneral}, and the theorem is thereby established for data satisfying \eqref{ThmOseenOmegaHom_RegularData}. However, it is easy to verify that
data satisfying \eqref{ThmOseenOmegaHom_RegularData} are dense in the space $\Yspace_1\times\Yspace_{2,3}\times\Yspace_4$. Consequently, the general
case follows by a density argument.
\end{proof}

\begin{thm}\label{ThmStokesProblemOmega}
Let $\Gamma$ be a $\CR{5}$-smooth closed surface, $q\in(1,\frac{3}{2})$, $r\in(3,\infty)$ and $\Rey=0$. 
For every
$\np{f,g,\hone,\htwo}\in\Yspace_1\times\Yspace_{2,3}\times\Yspace_4$
there exists a unique solution 
$(\uvel,\upres)$
to \eqref{SystemOseenOmegaGeneral}
with
\begin{align}\label{StokesOmegaFunctionClass}
\begin{split}
\uvel^\inner\in\WSR{2}{r}(\Omega^\inner)^3,&\quad
\uvel^\outer\in\bp{\WSRhom{2}{q}(\Omega^\outer)\cap\WSRhom{2}{r}(\Omega^\outer)
\cap\WSRhom{1}{\frac{3q}{3-q}}(\Omega^\outer)\cap\LR{\frac{3q}{3-2q}}(\Omega^\outer)}^3,\\
\upres^\inner\in\WSR{1}{r}(\Omega^\inner),&\quad
\upres^\outer\in\WSRhom{1}{q}(\Omega^\outer)\cap\WSRhom{1}{r}(\Omega^\outer)\cap\LR{\frac{3q}{3-q}}(\Omega^\outer),
\end{split}
\end{align} 
that satisfies \eqref{EqNormalizePressure} and
\begin{align}\label{EstimateStokesOmega}
\norm{\grad^2\uvel}_q
+\norm{\grad^2\uvel}_r
+\norm{\grad\uvel}_{\frac{3q}{3-q}}
+\norm{\uvel}_{\frac{3q}{3-2q}}
+\norm{\grad\upres}_q
+\norm{\grad\upres}_r
\leq\Cc[ConstStokesOmega]{C}
\norm{(f,g,\hone,\htwo)}_{\Yspace_1\times\Yspace_{2,3}\times\Yspace_4},
\end{align}
where $\const{ConstStokesOmega}=\const{ConstStokesOmega}(q,r,\Omega)>0$.
\end{thm}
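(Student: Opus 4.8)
The plan is to follow the blueprint of the proof of Theorem~\ref{ThmOseenOmegaHom}, replacing each Oseen building block by its Stokes counterpart and observing that for $\Rey=0$ the far-field contributions are governed by the whole-space \emph{Stokes} system rather than by an Oseen system. As in Theorem~\ref{ThmOseenOmegaHom}, I would first restrict to regular, compactly supported data as in \eqref{ThmOseenOmegaHom_RegularData}; such data are dense in $\Yspace_1\times\Yspace_{2,3}\times\Yspace_4$, so the general case follows by a density argument at the very end. For such data, Theorem~\ref{ThmOseenWeakSolutions} (with $\Rey=0$, where the matrix $I+\Rey A$ is simply the identity), Theorem~\ref{PropAssPressure} (which for $\Rey=0$ already yields $\upres\in\LRN{2}(\R^3)$) and Theorem~\ref{ThmHigherRegularityWeakSolutions} provide a weak solution $(\uvel,\upres)\in\DSRN{1}{2}(\R^3)^3\times\LRN{2}(\R^3)$ with $\uvel\in\bigcap_{\ell=0}^{2}\WSRhom{\ell+2}{2}(\Omega)$ and $\upres\in\bigcap_{\ell=0}^{2}\WSRhom{\ell+1}{2}(\Omega)$; by Sobolev embedding $(\uvel,\upres)\in\WSR{2}{r}(\Omega_{2R})^3\times\WSR{1}{r}(\Omega_{2R})$ for every fixed $R>\delta(\Omega)$.

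Next I would localize exactly as in Theorem~\ref{ThmOseenOmegaHom}: cover $\overline{\Omega_R}$ by boundary balls that are flattened by $\CR{5}$-diffeomorphisms $\Phi_i$ with $\norm{\grad\Phi_i}_\infty$ arbitrarily small, together with interior and exterior patches, and apply a subordinate partition of unity. This decomposes $(\uvel,\upres)$ into finitely many solutions of the twofold half-space Stokes problem \eqref{SystemStokesTwofoldHalfSpace}, controlled by Theorem~\ref{ADN_TwoFoldHalfSpace} with the $\grad\Phi_i$-weighted higher-order terms absorbed on the left-hand side once the radii are sufficiently small, plus two whole-space Stokes problems, plus one further whole-space Stokes problem carrying the far-field part (this is the piece that was an Oseen problem in Theorem~\ref{ThmOseenOmegaHom}). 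For the whole-space Stokes problems I would invoke \cite[Theorem IV.2.1]{GaldiBookNew}; since the relevant solutions agree with the $\LR{q}\cap\LR{r}$ solutions constructed there (uniqueness within $\DSRN{1}{2}(\R^3)\embeds\LR{6}(\R^3)$), and since $q<\tfrac32$, the resulting $\WSRhom{2}{q}\cap\WSRhom{2}{r}$ bounds upgrade by the Sobolev inequality to control of $\grad\uvel$ in $\LR{3q/(3-q)}$, of $\uvel$ in $\LR{3q/(3-2q)}$, and of $\grad\upres$ in $\LR{3q/(3-q)}$ — exactly the norms in \eqref{StokesOmegaFunctionClass} and \eqref{EstimateStokesOmega}. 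Summing the pieces yields \eqref{EstimateStokesOmega} with an additional lower-order remainder $\norm{\uvel}_{\WSR{1}{r}(\Omega_{2R})}+\norm{\upres}_{\LR{r}(\Omega_{2R})}$ on the right-hand side.

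It then remains to absorb this remainder, which I would do by the same compactness--contradiction argument as for Theorem~\ref{ThmOseenOmegaHom}: if the remainder could not be dropped, one constructs a sequence of solutions of the homogeneous problem, normalized so that $\norm{\uvel_n}_{\WSR{1}{r}(\Omega_{2R})}+\norm{\upres_n}_{\LR{r}(\Omega_{2R})}=1$, which by the a priori bound and the compactness of $\DSR{2}{r}(\Omega)\cap\LR{3q/(3-2q)}(\Omega)\embeds\WSR{1}{r}(\Omega_{2R})$ and $\DSR{1}{r}(\Omega)\cap\LR{3q/(3-q)}(\Omega)\embeds\LR{r}(\Omega_{2R})$ converges to a solution $(\uvel,\upres)$ of \eqref{SystemOseenOmegaGeneral} with $\Rey=0$, homogeneous data and $\uvel\in\LR{3q/(3-2q)}(\R^3)$; Theorem~\ref{PropUniquenessWeakSolutionGeneral} forces $(\uvel,\upres)=(0,0)$, contradicting the normalization. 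The restriction $q<\tfrac32$ is critical here (as it is in Theorem~\ref{ThmOseenOmegaHom}), since it makes the exterior decay integrable in the required exponents. Uniqueness in the full statement follows directly from Theorem~\ref{PropUniquenessWeakSolutionGeneral}, the normalization \eqref{EqNormalizePressure} is built into $\LRN{2}(\R^3)$, and a density argument finally extends the conclusion to all data in $\Yspace_1\times\Yspace_{2,3}\times\Yspace_4$.

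I expect the principal obstacle to be the bookkeeping in the localization step rather than any single sharp estimate: one must check that every far-field behaviour is faithfully captured by whole-space Stokes problems whose data inherit exactly the summability of $f$ and $g$ up to compactly supported lower-order terms, and that the homogeneous-Sobolev norms appearing in \eqref{StokesOmegaFunctionClass} are genuinely controlled — the exponents $3q/(3-q)$ and $3q/(3-2q)$ degenerate as $q\to\tfrac32$, leaving little slack, and the weighted higher-order error terms produced by the flattening must be absorbed uniformly across all boundary patches. Beyond that, the argument is a faithful transcription of the proof of Theorem~\ref{ThmOseenOmegaHom} with the Oseen ingredients replaced by Stokes ones.
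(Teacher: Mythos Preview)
Your proposal is correct and follows essentially the same approach as the paper's own proof, which simply states that the argument is identical to that of Theorem~\ref{ThmOseenOmegaHom} except that the far-field piece $(\wvel,\wpres)$ now solves a whole-space Stokes problem rather than an Oseen problem, so one invokes \cite[Theorem IV.2.1]{GaldiBookNew} in place of \cite[Theorem VII.4.1]{GaldiBookNew}. Your elaboration of the localization, the compactness--contradiction step, and the density argument matches the paper's intended reasoning precisely.
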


\begin{proof}
The proof is similar to that of Theorem \ref{ThmOseenOmegaHom}, the only difference being 
that $\Rey=0$ in \eqref{ThmOseenOmegaHom_WholeSpaceOseen}. This implies that $\np{\wvel,\wpres}$ solves a whole-space Stokes problem instead of an Oseen problem. Therefore, we use \cite[Theorem IV.2.1]{GaldiBookNew} in this case to obtain estimate \eqref{EstimateStokesOmega}.
The rest of the proof is identical to that of Theorem \ref{ThmOseenOmegaHom}.
\end{proof}

\section{Reformulation on a fixed domain}\label{ReformulationFixedDomainSection}

The steady-state equations of motion
as expressed in \eqref{derivation_sseq_final} in a frame attached to the barycenter 
of the falling drop form a classical free boundary problem. Specifically, 
the boundary $\Gamma$ 
depends on the unknown height function $\height$.
For further analysis it is necessary to refer all unknowns in this so-called \textit{current configuration} to a fixed domain \textit{reference configuration}.
This section is devoted to such a reformulation.

As mentioned in the introduction and further elaborated on in Section \ref{EqOfMotionSection}, we investigate a falling drop whose stress-free configuration, \ie,
the configuration when the density in the two liquids is the same,
is the unit ball $\ball_1$ in non-dimensionalized coordinates. Our aim is to establish existence of steady-state configurations
close to the stress-free configuration $\ball_1$ for small density differences. Canonically, we therefore choose 
\begin{align*}
\refdomain \coloneqq   \R^3\setminus\sphere^2
\end{align*}
as the fixed liquid reference domain.

In order to refer the equations of motion to $\refdomain$, we first construct a suitable coordinate transformation $\ctrafo$ 
based on the height function $\height$. For technical reasons, it is important that $\ctrafo$ retains any rotational symmetry possessed by $\height$.

\begin{lem}\label{ConstrucntionOfCoordinateTrafo}
Let $r\in(3,\infty)$.
There is an extension operator
\begin{align*}
\extopr: \WSR{3-1/r}{r}(\sphere^2)\to\WSR{3}{r}(\R^3\setminus\sphere^2)^3
\end{align*}
satisfying $\trace_{\sphere^2} E(\height)=\eta\,\id$, $\supp\extopr\np{\height}\subset\ball_4$ and
\begin{align}\label{ConstrucntionOfCoordinateTrafo_ExtOprEst}
\norm{\extopr(\height)}_{\WSR{3}{r}}
\leq \Cc{C} \norm{\height}_{\WSR{3-1/r}{r}}.
\end{align}
The extension operator is invariant with respect to rotations, that is, for all $\rotmatrix\in\sorthomatrixspace{3}$:
\begin{align}\label{ConstrucntionOfCoordinateTrafo_RotSym}
\extopr\bp{\height\np{R\,\cdot}}(x) = \rotmatrix^\transpose\extopr(\height)(\rotmatrix x).
\end{align}
If $r>3$, there is a $\Cc[heightmin]{delta}>0$ such that for any $\eta\in\WSR{3-1/r}{r}(\sphere^2)$  with $\norm{\eta}_{\WSR{3-1/r}{r}}<\const{heightmin}$ the mapping 
\begin{align*}
\ctrafo:\R^3\ra\R^3,\quad \ctrafo(x)=x+E(\height)(x)
\end{align*}
is continuous and maps $\refdomain$ $\CR{2}$-diffeomorphically onto $\Omega=\Omega_\height$ with
\begin{align*}
\ctrafo(\sphere^2)=\Gamma_\height, \quad 
\ctrafo (\ball_1)=\Omega_\height^\inner, \quad 
\ctrafo(\ball^1)=\Omega_\height^\outer.
\end{align*}
\end{lem}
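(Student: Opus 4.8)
The plan is to build the extension operator $\extopr$ by symmetrizing a standard extension operator over the rotation group, and then to deduce the stated properties of $\ctrafo=\id+\extopr(\height)$ by a routine Sobolev‑embedding and fixed‑point argument. The only genuinely delicate point is the construction of $\extopr$: a bounded extension $\WSR{3-1/r}{r}(\sphere^2)^3\to\WSR{3}{r}(\R^3)^3$ with support in a fixed ball is completely standard, but building in, on top of that, the rotational invariance \eqref{ConstrucntionOfCoordinateTrafo_RotSym} is the crux and is what forces the averaging procedure below. Once $\extopr$ is in hand, the trace identity, the bound \eqref{ConstrucntionOfCoordinateTrafo_ExtOprEst} and the diffeomorphism statement are soft.

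\emph{Construction of $\extopr$.} Since $\sphere^2$ is a smooth closed surface there is a bounded linear operator $\mathcal{E}_0\colon\WSR{3-1/r}{r}(\sphere^2)^3\to\WSR{3}{r}(\R^3)^3$ with $\trace_{\sphere^2}\mathcal{E}_0 w=w$; multiplying by a fixed cut‑off $\cutoff\in\CRci(\ball_4)$ equal to $1$ near $\sphere^2$ we may in addition assume $\supp\mathcal{E}_0 w\subset\ball_4$. For $\height\in\WSR{3-1/r}{r}(\sphere^2)$ put $v_\height(\zeta)\coloneqq\height(\zeta)\zeta$, so that $v_\height\in\WSR{3-1/r}{r}(\sphere^2)^3$ with $\norm{v_\height}_{\WSR{3-1/r}{r}}\le C\norm{\height}_{\WSR{3-1/r}{r}}$ (multiplication by the smooth inclusion $\sphere^2\embeds\R^3$). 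For $\rotmatrix\in\sorthomatrixspace{3}$ introduce the isometries $(T_{\rotmatrix}w)(\zeta)\coloneqq\rotmatrix^\transpose w(\rotmatrix\zeta)$ of $\WSR{3-1/r}{r}(\sphere^2)^3$ and $(S_{\rotmatrix}u)(x)\coloneqq\rotmatrix^\transpose u(\rotmatrix x)$ of $\WSR{3}{r}(\R^3)^3$, and set
\begin{align*}
\extopr(\height)\coloneqq\int_{\sorthomatrixspace{3}}S_{\rotmatrix}\bigl(\mathcal{E}_0\bigl(T_{\rotmatrix}^{-1}v_\height\bigr)\bigr)\,\darg\rotmatrix,
\end{align*}
the Bochner integral with respect to the normalized Haar measure on $\sorthomatrixspace{3}$; the integrand is continuous in $\rotmatrix$, so $\extopr(\height)$ is well defined. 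As $S_{\rotmatrix}$ and $T_{\rotmatrix}$ are isometries, $\norm{\extopr(\height)}_{\WSR{3}{r}}\le\norm{\mathcal{E}_0}\,\norm{v_\height}_{\WSR{3-1/r}{r}}\le C\norm{\height}_{\WSR{3-1/r}{r}}$, which is \eqref{ConstrucntionOfCoordinateTrafo_ExtOprEst}, and each term is supported in $\rotmatrix^{-1}\ball_4=\ball_4$, whence $\supp\extopr(\height)\subset\ball_4$. Using $\trace_{\sphere^2}(S_{\rotmatrix}u)=T_{\rotmatrix}(\trace_{\sphere^2}u)$ and $T_{\rotmatrix}v_\height=v_{\height(\rotmatrix\,\cdot)}$ one gets $\trace_{\sphere^2}\extopr(\height)=\int_{\sorthomatrixspace{3}}T_{\rotmatrix}\bigl(T_{\rotmatrix}^{-1}v_\height\bigr)\,\darg\rotmatrix=v_\height=\height\,\id$. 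Finally, the substitution $\rotmatrix\mapsto\rotmatrix\rotmatrix_0$ in the Haar integral, together with the group relations for $T$ and $S$ and the identity $T_{\rotmatrix}v_{\height(\rotmatrix_0\,\cdot)}=T_{\rotmatrix\rotmatrix_0}v_\height$, yields $\extopr(\height(\rotmatrix_0\,\cdot))(x)=\rotmatrix_0^\transpose\extopr(\height)(\rotmatrix_0 x)$, i.e.\ \eqref{ConstrucntionOfCoordinateTrafo_RotSym}.

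\emph{The transformation $\ctrafo$.} Since $r>3$ we have $\WSR{3}{r}(\R^3)\embeds\CR{2}(\R^3)$, hence $\norm{\extopr(\height)}_{\CR{2}}\le C\norm{\height}_{\WSR{3-1/r}{r}}$. Choose $\const{heightmin}>0$ so small that $\norm{\height}_{\WSR{3-1/r}{r}}<\const{heightmin}$ forces $\norm{\grad\extopr(\height)}_\infty\le\tfrac12$ and $\norm{\extopr(\height)}_\infty<1$. Then $\ctrafo=\id+\extopr(\height)$ is continuous on $\R^3$ and of class $\CR{2}$ on $\refdomain=\R^3\setminus\sphere^2$, with $\grad\ctrafo=\idmatrix+\grad\extopr(\height)$ invertible at every point of $\refdomain$ (Neumann series). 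Moreover, for each $y\in\R^3$ the map $x\mapsto y-\extopr(\height)(x)$ is a $\tfrac12$‑contraction of $\R^3$, so it has a unique fixed point; hence $\ctrafo$ is a bijection of $\R^3$, and being a local $\CR{2}$‑diffeomorphism on $\refdomain$ it restricts to a $\CR{2}$‑diffeomorphism of $\refdomain$ onto its image. On $\sphere^2$ we have $\ctrafo(\zeta)=(1+\height(\zeta))\zeta$ with $1+\height(\zeta)>0$, so $\ctrafo(\sphere^2)=\Gamma_\height$; consequently $\ctrafo$ maps $\R^3\setminus\sphere^2$ bijectively onto $\R^3\setminus\Gamma_\height=\Omega_\height$ and carries the two connected components $\ball_1$ and $\ball^1$ of $\refdomain$ onto the two components of $\Omega_\height$. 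Since $\ctrafo$ equals the identity outside $\ball_4$, the image of the unbounded component $\ball^1$ is unbounded, so $\ctrafo(\ball^1)=\Omega_\height^\outer$ and $\ctrafo(\ball_1)=\Omega_\height^\inner$, which completes the plan.
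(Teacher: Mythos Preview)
Your proof is correct and takes a genuinely different route from the paper's.  The paper constructs $\extopr(\height)$ by solving the Dirichlet problems $\Delta H_\height=0$ in $\ball_1$ and in $\ball_4\setminus\overline{\ball_1}$ with boundary value $\height$ on $\sphere^2$, and then sets $\extopr(\height)(x)=\cutoff(\snorm{x})H_\height(x)\,x$; rotation invariance falls out because the Laplacian and the radial cut-off are rotation-invariant.  For the diffeomorphism the paper argues that $\det\grad\ctrafo>0$ for small $\height$ and invokes Hadamard's global inverse function theorem.  Your approach instead symmetrizes an arbitrary bounded extension by Haar-averaging over $\sorthomatrixspace{3}$, and establishes global invertibility by a contraction argument in place of Hadamard.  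Both are perfectly valid; the paper's is more explicit (and yields a concrete formula useful in later estimates), while yours is more abstract and in fact delivers slightly more: your $\extopr(\height)$ lands in $\WSR{3}{r}(\R^3)^3$, hence in $\CR{2}(\R^3)^3$, whereas the harmonic extension a priori lies only in $\WSR{3}{r}(\R^3\setminus\sphere^2)^3$ since the normal derivatives of $H_\height$ need not match across $\sphere^2$.  This extra regularity is not needed downstream, but it does make your diffeomorphism argument marginally cleaner.
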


\begin{proof}
For  $\height\in\WSR{3-1/r}{r}(\sphere^2)$ 
let $H_\height\in\WSR{3}{r}(\ball_4\setminus\sphere^2)$ denote the unique solution  to 
\begin{align}\label{Reformulation_DirichletLaplaceBall}
\begin{pdeq}
\Delta H_\height&=0 &&\tin\ball_1, \\
H_\height&=\height &&\ton\sphere^2,
\end{pdeq}\qquad\qquad  
\begin{pdeq}
\Delta H_\height&=0 &&\tin\ball_4\setminus\overline{\ball_1}, \\
H_\height&=\height &&\ton\sphere^2, \\
H_\height&=0 &&\ton\partial\ball_4.
\end{pdeq}
\end{align}
Since the Laplace operator is rotational invariant, also the solution $H_\height$ is invariant with respect to rotations of the data $\eta$.  
Let $\cutoff\in\CRci(\R)$ be a cut-off function with $\cutoff(s)=1$ for $\snorm{s}\leq 2$ and $\cutoff(s)=1$ for $\snorm{s}\geq 3$. Putting
\begin{align*}
\extopr(\height)(x) \coloneqq  \cutoff(\snorm{x})\,H_\height(x)\,x,
\end{align*}
we obtain an operator with the desired properties. Observe that $\extopr\np{\height}\in\WSR{1}{r}\np{\R^3}$. 
Therefore, $\ctrafo(x)\coloneqq x+E(\height)(x)$ is a well-defined pointwise mapping $\ctrafo\colon\R^3\to\R^3$.
Since $r>3$, the Sobolev embedding $\WSR{3}{r}(\R^3\setminus\sphere^2)\embeds\CR{2}(\R^3\setminus\sphere^2)$
implies that $\ctrafo\in\CR{2}(\refdomain)$. Moreover, by \eqref{ConstrucntionOfCoordinateTrafo_ExtOprEst} we clearly have $\det\grad\ctrafo=\det\bp{\idmatrix+\grad\extopr(\height)}>0$
when $\norm{\height}_{\WSR{3-1/r}{r}(\sphere^2)}$ is sufficiently small. In this case, $\ctrafo$ is a $\CR{2}$-diffeomorphism onto its image $\Omega$ by the
global inverse function theorem of Hadamard.
\end{proof}

We shall use $\ctrafo$ to change the coordinates and consequently express \eqref{derivation_sseq_final} in the reference configuration $\refdomain$.
To this end, we set
\begin{align}\label{EqFieldsTransformedOnFixedDomain}
\wvel\coloneqq\vvel\circ\ctrafo,\qquad
\wpres\coloneqq\vpres\circ\ctrafo.
\end{align}
In order to simplify the notation, we put
\begin{align}
\gradctrafo&\coloneqq \grad \ctrafo = \idmatrix + \grad E(\height),\label{EqDefCtrafoF}\\
\detctrafo & \coloneqq \det \gradctrafo 
= 1 + \Div E(\height) + \sum_{i=1}^3\prod_{\stackrel{j=1}{j\neq i}}^3 \partial_j E(\height) + \det(\grad E(\height)), \label{EqDefCtrafoDet} \\
\cofctrafo & \coloneqq (\cof \gradctrafo )^\transpose 
=\bp{1+\Div E(\height)}\idmatrix - \grad E(\height)+\cof(\grad E(\height))^\transpose, \label{EqDefCtrafoCof}
\end{align}
and introduce the transformed stress tensor
\begin{align}\label{fluidstresstrafoDef}
\fluidstresstrafo(\wvel,\wpres)
\coloneqq \bb{\mu \np{\grad\wvel\gradctrafo^{-1}+\gradctrafo^{-\transpose}\grad\wvel^\transpose} - \wpres \idmatrix} \cofctrafo^\transpose
=\bp{\fluidstress(\vvel,\vpres)\circ\ctrafo}\cofctrafo^\transpose.
\end{align}
Observe that an application of the Piola identity yields
\begin{align*}
\Div \fluidstresstrafo(\wvel,\wpres) = \detctrafo \bp{\Div\fluidstress(\vvel,\vpres)}\circ\ctrafo
\quad\tand\quad
\Div(\cofctrafo\wvel)=\detctrafo\np{\Div\wvel}\circ\ctrafo.
\end{align*}
The normal vector $\nvec_\Gamma$ at $\Gamma$ expressed in the coordinates of the reference configuration is given by
\[
\nvec_\Gamma \circ \ctrafo = \frac{\cofctrafo^\transpose \nvec_{\sphere^2}}{\snorm{\cofctrafo^\transpose \nvec_{\sphere^2}}},
\]
and the transformed tangential projection by
\begin{align*}
\projtangtrafo
\coloneqq
\idmatrix - \snorm{\cofctrafo^\transpose \nvec_{\sphere^2}}^{-2} 
\cofctrafo^\transpose \np{\nvec_{\sphere^2}\otimes\nvec_{\sphere^2}}\cofctrafo
= (\idmatrix-\nvec_{\Gamma}\otimes\nvec_\Gamma)\circ \ctrafo.
\end{align*}
With this notation, the steady-state equations of motion \eqref{derivation_sseq_final} take the following form in the reference configuration:
\begin{align}\label{SystemFluidReferenceConfigurationPreSimplification}
\begin{pdeq}
\rho\bp{\np{\cofctrafo \wvel} \cdot \grad \wvel + \rey \grad\wvel\cofctrafo e_3}
&=\Div \fluidstresstrafo(\wvel,\wpres)  
&& \text{in } \Omega_0 , 
\\
\Div \np{\cofctrafo \wvel} &= 0 && \text{in } \Omega_0, 
\\
\jump{\wvel}&=0  &&\text{on } \sphere^2, 
\\
\detctrafo\wvel \cdot 
\frac{\cofctrafo^\transpose \nvec_{\sphere^2}}{\snorm{\cofctrafo^\transpose \nvec_{\sphere^2}}}
&=
-\detctrafo\rey e_3 \cdot \frac{\cofctrafo^\transpose\nvec_{\sphere^2}}{\snorm{\cofctrafo^\transpose \nvec_{\sphere^2}}} 
&&\text{on } \sphere^2, 
\\
\cofctrafo\,\projtangtrafo \jump{\fluidstresstrafo(\wvel,\wpres)\nvec_{\sphere^2}}
&=0 
&& \text{on }\sphere^2, 
\\
\frac{\cofctrafo^\transpose\nvec_{\sphere^2}}{\snorm{\cofctrafo^\transpose\nvec_{\sphere^2}}^2}
\cdot\jump{\fluidstresstrafo(\wvel,\wpres)\nvec_{\sphere^2}} 
&=
\frac{1}{16\pi}\frac{\cofctrafo^\transpose\nvec_{\sphere^2}}{\snorm{\cofctrafo^\transpose\nvec_{\sphere^2}}}
\cdot \int_{\sphere^2}\zeta\bb{\np{1+\height(\zeta)}^4-1}\,\dS \\
&\qquad+\sigma \bp{\meancurv+2}\circ\ctrafo
+\rhod (1+\height) e_3 \cdot \nvec_{\sphere^2}
&& \text{on } \sphere^2,
\\
\int_{\sphere^2} \jump{\fluidstresstrafo(\wvel,\wpres)\nvec_{\sphere^2}}\, \snorm{\cofctrafo^\transpose\nvec_{\sphere^2}}^{-1}\detctrafo\,\dS 
&= \rhod \frac{4\pi}{3} e_3, 
\\
\int_{\sphere^2}\bb{\np{1+\height}^3-1} \,\dS
&= 0,
\\
\lim_{\snorm{x}\to\infty}\wvel(x) & = 0
\end{pdeq}
\end{align}
with respect to unknowns $\np{\wvel,\wpres,\rey,\height}$. 
We use the notation $\nvec=\nvec_{\sphere^2}$ in the following.

In the next step, we exploit an inherent symmetry in \eqref{SystemFluidReferenceConfigurationPreSimplification} 
and simplify the system by replacing 
\eqrefsub{SystemFluidReferenceConfigurationPreSimplification}{7} 
with 
\begin{align*}
e_3\cdot\int_{\sphere^2} \jump{\fluidstresstrafo(\wvel,\wpres)\nvec}\, \snorm{\cofctrafo^\transpose\nvec}^{-1}\detctrafo\,\dS 
= \rhod \frac{4\pi}{3}.
\end{align*}
We shall \textit{a posteriori} verify that a solution to the simplified system exhibits axial symmetry around $e_3$ and 
consequently satisfies 
\begin{align*}
e_j\cdot\int_{\sphere^2} \jump{\fluidstresstrafo(\wvel,\wpres)\nvec}\, \snorm{\cofctrafo^\transpose\nvec}^{-1}\detctrafo\,\dS 
= 0\quad\text{for }j=1,2.
\end{align*}
Consequently, a solution to the simplified system 
\begin{align}\label{SystemFluidReferenceConfiguration}
\begin{pdeq}
\rho\bp{\np{\cofctrafo \wvel} \cdot \grad \wvel + \rey \grad\wvel\cofctrafo e_3}
&=\Div \fluidstresstrafo(\wvel,\wpres)  
&& \text{in } \Omega_0 , 
\\
\Div \np{\cofctrafo \wvel} &= 0 && \text{in } \Omega_0, 
\\
\jump{\wvel}&=0  &&\text{on } \sphere^2, 
\\
\detctrafo\wvel \cdot 
\frac{\cofctrafo^\transpose \nvec_{\sphere^2}}{\snorm{\cofctrafo^\transpose \nvec_{\sphere^2}}}
&=
-\detctrafo\rey e_3 \cdot \frac{\cofctrafo^\transpose\nvec_{\sphere^2}}{\snorm{\cofctrafo^\transpose \nvec_{\sphere^2}}} 
&&\text{on } \sphere^2, 
\\
\cofctrafo\,\projtangtrafo \jump{\fluidstresstrafo(\wvel,\wpres)\nvec_{\sphere^2}}
&=0 
&& \text{on }\sphere^2, 
\\
\frac{\cofctrafo^\transpose\nvec_{\sphere^2}}{\snorm{\cofctrafo^\transpose\nvec_{\sphere^2}}^2}
\cdot\jump{\fluidstresstrafo(\wvel,\wpres)\nvec_{\sphere^2}} 
&=
\frac{1}{16\pi}\frac{\cofctrafo^\transpose\nvec_{\sphere^2}}{\snorm{\cofctrafo^\transpose\nvec_{\sphere^2}}}
\cdot \int_{\sphere^2}\zeta\bb{\np{1+\height(\zeta)}^4-1}\,\dS \\
&\qquad+\sigma \bp{\meancurv+2}\circ\ctrafo
+\rhod (1+\height) e_3 \cdot \nvec_{\sphere^2}
&& \text{on } \sphere^2,\\
e_3\cdot\int_{\sphere^2} \jump{\fluidstresstrafo(\wvel,\wpres)\nvec}\, \snorm{\cofctrafo^\transpose\nvec}^{-1}\detctrafo\,\dS 
&= \rhod \frac{4\pi}{3},\\
\int_{\sphere^2}\bb{\np{1+\height}^3-1} \,\dS
&= 0,\\
\lim_{\snorm{x}\to\infty}\wvel(x) &= 0
\end{pdeq}
\end{align}
with unknowns $\np{\wvel,\wpres,\rey,\height}$ is also a solution to \eqref{SystemFluidReferenceConfigurationPreSimplification}.
The analysis in the remaining part of the article is carried out on the system \eqref{SystemFluidReferenceConfiguration}.

\section{Linearization}\label{LinearizationSection}

A main challenge is to identify a suitable linearization of \eqref{SystemFluidReferenceConfiguration}
such that the fully nonlinear system can be solved via a perturbation technique. Indeed, as explained in the introduction, the trivial linearization obtained by neglecting
all nonlinear terms is not suitable since it leads to a Stokes-type rather than an Oseen-type problem. 
Instead, we shall linearize the equations around a non-trivial first-order approximation.

In order to identify the first-order approximation, we
utilize an idea going back to \textsc{Happel} and \textsc{Brenner} \cite{happelbrenner} and introduce as \emph{auxiliary field} a solution to the system
\begin{align}\label{SystemFluidAux}
\begin{pdeq}
\Div \fluidstress(\Uvel,\Upres) &=0 && \text{in } \Omega_0, \\
\Div \Uvel &= 0 && \text{in } \Omega_0, \\
\jump{\Uvel}&=0  &&\text{on } \sphere^2, \\
\Uvel \cdot \nvec &=- e_3 \cdot \nvec &&\text{on } \sphere^2, \\
\projtang \jump{\fluidstress(\Uvel,\Upres)\nvec}&=0 && \text{on }\sphere^2,\\
\lim_{\snorm{x}\ra\infty}\Uvel(x)&=0.&&
\end{pdeq}
\end{align}
By Theorem \ref{ThmStokesProblemOmega},  a solution $\np{\Uvel,\Upres}$ 
to \eqref{SystemFluidAux} exists with
\begin{align}\label{UvelUpresIntegrability}
\begin{aligned}
&\forall s\in(3,\infty]:\quad \Uvel\in\LR{s}\np{\refdomain},\\
&\forall s\in\big(\tfrac{3}{2},\infty\big]:\quad \grad\Uvel,\,\Upres\in\LR{s}\np{\refdomain},\\
&\forall s\in(1,\infty):\quad \grad^2\Uvel,\,\grad\Upres\in\LR{s}\np{\refdomain}.
\end{aligned}
\end{align}
Moreover, standard regularity theory for the Stokes problem
implies that both $\Uvel$ and $\Upres$ are smooth in $\Omega_0$, and well-known decay estimates for the 3D exterior domain Stokes problem (see for example \cite[Theorem V.3.2]{GaldiBookNew})
yield
\begin{align}\label{UVelUpresPointwiseDecayEst}
\begin{aligned}
\Uvel=\bigo\np{\snorm{x}^{-1}},\quad
\grad\Uvel=\bigo\np{\snorm{x}^{-2}}\quad\tand\quad
\Upres=\bigo\np{\snorm{x}^{-2}}
\quad\tas \snorm{x}\ra\infty.
\end{aligned}
\end{align}
Additionally, both the Stokes operator and the boundary operator on the left-hand side of \eqref{SystemFluidAux}
are invariant with respect to rotations. Since the data on the right-hand side is clearly invariant with respect to
rotations $\rotmatrix\in\sorthomatrixspace{3}$ leaving $e_3$ invariant, the solution $(\Uvel,\Upres)$ retains this symmetry:
\begin{align}\label{AuxFieldSymmetry}
\forall \rotmatrix\in\sorthomatrixspace{3},\ \rotmatrix e_3=e_3:\quad R^\transpose \Uvel(Rx)=\Uvel(x),\ \Upres(Rx)=\Upres(x).
\end{align}
By adding a constant to $\Upres^\inner$, that is, replacing $\Upres$ with
\begin{align*}
\widetilde{\Upres}\coloneqq 
\begin{pdeq}
&\Upres + C &&\tin\B_1,\\
&\Upres &&\tin\B^1,
\end{pdeq}
\end{align*}
we may assume, by choosing the constant $C$ appropriately, that
\begin{align}\label{Linearization_VanishingNormalTotalForce}
\int_{\sphere^2} \nvec\cdot \jump{\fluidstress(\Uvel,\Upres)\nvec}\,\dS = 0.
\end{align}
Moreover, we utilize \eqrefsub{SystemFluidAux}{5} to compute 
\begin{align}\label{Linearization_PositivityCondTotalForce}
\begin{aligned}
-e_3 \cdot \int_{\sphere^2}\jump{\fluidstress(\Uvel,\Upres)\nvec} \,\dS 
&= -\int_{\sphere^2} \bp{e_3 \cdot \nvec} \, \nvec \cdot \jump{\fluidstress(\Uvel,\Upres)\nvec} \,\dS \\
&= \int_{\sphere^2} \bp{\Uvel \cdot \nvec} \, \nvec \cdot \jump{\fluidstress(\Uvel,\Upres)\nvec}\,\dS 
= \int_{\sphere^2} \jump{\Uvel \cdot \fluidstress(\Uvel,\Upres)\nvec}\,\dS \\
&= \int_{\Omega_0} \grad\Uvel : \fluidstress(\Uvel,\Upres) + \Uvel\cdot \Div\fluidstress(\Uvel,\Upres) \,\dx \\ 
&=\int_{\Omega_0} 2\mu\,\snorml{\symmgrad(\Uvel)}^2 \,\dx>0.
\end{aligned}
\end{align}
We can therefore choose 
\begin{align}\label{EquationReyAux}
\Reyrhod \coloneqq  \Bp{e_3 \cdot \int_{\sphere^2}\jump{\fluidstress(\Uvel,\Upres)\nvec} \,\dS}^{-1}\rhod \frac{4\pi}{3}.
\end{align}
This choice of $\Reyrhod$ combined with the fact that the symmetry \eqref{AuxFieldSymmetry} implies
\begin{align*}
e_j\cdot \int_{\sphere^2}\jump{\fluidstress(\Uvel,\Upres)\nvec} \,\dS = 0 \quad (j=1,2)
\end{align*}
means that
$(\Reyrhod\Uvel,\Reyrhod\Upres,\Reyrhod,0)$ is a solution to the trivial linearization of \eqref{SystemFluidReferenceConfiguration} around the zero state,
that is, to the system obtained by neglecting in \eqref{SystemFluidReferenceConfiguration} all nonlinear terms with respect to $\np{\wvel,\wpres,\rey,\height}$.
The state $(\Reyrhod\Uvel,\Reyrhod\Upres,\Reyrhod,0)$ can therefore be seen as a first-order approximation of the solution to \eqref{SystemFluidReferenceConfiguration}.

We shall seek to linearize \eqref{SystemFluidReferenceConfiguration} around $(\Reyrhod\Uvel,\Reyrhod\Upres,\Reyrhod,0)$. Since $\rhod\neq 0$ implies
$\Reyrhod\neq 0$, a linearization around $(\Reyrhod\Uvel,\Reyrhod\Upres,\Reyrhod,0)$ would result in an Oseen-type problem. However, a direct linearization
around $(\Reyrhod\Uvel,\Reyrhod\Upres,\Reyrhod,0)$ is still precarious since $(\Uvel,\Upres)$ is a solution to a Stokes problem, whence a linearization around
this state would bring about right-hand side terms inadmissible in an Oseen setting. Instead, we introduce a truncation of the state. 
More specifically, 
we let $\cutoff\in\CRci(\R)$ be a cut-off function with 
$\cutoff\np{r}=1$ for $\snorm{x} \leq 1$ and $\cutoff\np{r}=0$ for $\snorm{r} \geq 2$, and define
$\cutoff_R\in\CRci(\R^3)$ by $\cutoff_R(x)\coloneqq \cutoff\bp{R^{-1}\snorm{x}}$ for $R>4$.
Via the truncated auxiliary fields
\begin{align}\label{AuxfieldsDef}
\Uvel_R\coloneqq \cutoff_R \Uvel, \quad \Upres_R\coloneqq \cutoff_R \Upres,
\end{align}
we finally obtain the state $(\Reyrhod\Uvel_R,\Reyrhod\Upres_R,\Reyrhod,0)$ around which we shall linearize the system \eqref{SystemFluidReferenceConfiguration}.
Specifically, we let
\begin{align}\label{EqLinearizationAtAuxField}
\reyd\coloneqq\rey-\Reyrhod,\quad\uvel \coloneqq \wvel-\Reyrhod\Uvel_R-\reyd\Uvel_R, \quad \upres\coloneqq\wpres-\Reyrhod\Upres_R-\reyd\Upres_R
\end{align}
and investigate \eqref{SystemFluidReferenceConfiguration} with respect to the unknowns $(\uvel,\upres,\kappa,\height)$.

To conclude the linearization, we 
express the mean curvature $\meancurv$ on $\Gamma$ as a function of $\height$. As in \cite[Section 2.2.5]{PruessSimonett_MovingInterfacesParabolicEvolutionEquations}, we
obtain
\begin{align*}
\meancurv\circ\ctrafo=
\frac{1}{1+\height} \Bp{\frac{\LaplaceBeltrami \height}{\sqrt{g}}
+\grad_{\sphere^2}\frac{1}{\sqrt{g}}\cdot\grad_{\sphere^2}\height
-\frac{2(1+\height)}{\sqrt{g}}},
\end{align*}
where $\LaplaceBeltrami$ and $\grad_{\sphere^2}$ denote the Laplace--Beltrami operator and the surface gradient on the unit sphere $\sphere^2$, respectively, and 
\begin{align*}
g \coloneqq (1+\height)^2+\snorm{\grad_{\sphere^2}\height}^2.
\end{align*}
Then we have
\begin{align*}
(\meancurv+2)\circ\ctrafo
=\LaplaceBeltrami \height + 2 \height -\calg_\meancurv(\height)
\end{align*}
with  
\begin{align*}
\calg_\meancurv(\height)
\coloneqq  -\frac{1}{1+\height} \frac{1-(1+\height)\sqrt{g}}{\sqrt{g}}\LaplaceBeltrami\height
-\frac{1}{1+\height}\grad_{\sphere^2}\frac{1}{\sqrt{g}}\cdot\grad_{\sphere^2}\height
+\frac{2-2(1-\height)\sqrt{g}}{\sqrt{g}}
\end{align*}
containing all the nonlinear terms.

We are now in a position to express \eqref{SystemFluidReferenceConfiguration} as a suitable perturbation of a linear problem with respect to the
unknowns $(\uvel,\upres,\kappa,\height)$.
Indeed, in a setting of velocity fields satisfying $\jump{\uvel}=0$ and $\lim_{\snorm{x}\ra\infty}\uvel(x)=0$ 
we can express \eqref{SystemFluidReferenceConfiguration} equivalently as 
\begin{align}\label{SystemAbstractFormulation}
\loprrhod\np{\uvel,\upres,\reyd,\height}=\nopr(\uvel,\upres,\reyd,\height),
\end{align}
where the linear operator $\loprrhod$ is given by
\begin{align}\label{LinearOperatorAbstract}
\begin{aligned}
\loprrhod(\uvel,\upres,\reyd,\height)&\coloneqq 
\begin{pmatrix}
-\Div \fluidstress(\uvel,\upres) + \rho \Reyrhod\,\partial_3\uvel
\\
\Div \uvel 
\\
\uvel\cdot \nvec 
\\
\projtang \jump{\fluidstress(\uvel,\upres)\nvec}
\\
\reyd e_3\cdot\int_{\sphere^2} \jump{\fluidstress(\Uvel,\Upres)\nvec}\,\dS 
+ e_3\cdot\int_{\sphere^2} \jump{\fluidstress(\uvel,\upres)\nvec}\,\dS
\\
\int_{\sphere^2} \height\,\dS
\\
\sigma\np{\LaplaceBeltrami+2}\height
+\frac{1}{4\pi}\nvec\cdot\int_{\sphere^2}\height\nvec \,\dS
-\reyd\nvec\cdot\jump{\fluidstress(\Uvel,\Upres)\nvec}
-\nvec\cdot\jump{\fluidstress(\uvel,\upres)\nvec}
\end{pmatrix}\\
&=:\begin{pmatrix}
\loprcomp{1-4}(\uvel,\upres) \\
\loprcomp{5}(\uvel,\upres,\reyd) \\
\loprcomp{6}(\height) \\
\loprcomp{7}(\uvel,\upres,\reyd,\height) \\
\end{pmatrix}
\end{aligned}
\end{align}
and the nonlinear operator $\nopr=\np{\noprcomp_1,\ldots,\noprcomp_7}$ consists of the components
\begin{align*}
\noprcomp_1(\uvel,\upres,\reyd,\height)
&\coloneqq  (\Reyrhod+\reyd)\Div\fluidstresstrafo(\Uvel_R,\Upres_R) 
+ \Div\fluidstresstrafo(\uvel,\upres)-\Div\fluidstress(\uvel,\upres)
- \rho\cofctrafo\uvel\cdot\grad\uvel \\
&\quad 
- \rho(\Reyrhod+\reyd)\bp{\cofctrafo\Uvel_R \cdot \grad\uvel + \cofctrafo\uvel\cdot\grad\Uvel_R}
- \rho(\Reyrhod+\reyd)^2\cofctrafo\Uvel_R\cdot\grad\Uvel_R\\
&\quad 
- \rho\reyd\grad\uvel\cofctrafo e_3 
- \rho\Reyrhod\grad\uvel(\cofctrafo-\idmatrix)e_3
- \rho(\Reyrhod+\reyd)^2\grad\Uvel_R\cofctrafo e_3, \\
\noprcomp_2(\uvel,\upres,\reyd,\height)
&\coloneqq \Div(\np{\idmatrix-\cofctrafo}\uvel)
-(\Reyrhod+\reyd)\Div(\cofctrafo\Uvel_R), \\
\noprcomp_3(\uvel,\upres,\reyd,\height)
&\coloneqq  \bp{\uvel+(\Reyrhod+\reyd)\np{\Uvel+e_3}}
\cdot\bp{\idmatrix-\detctrafo \snorm{\cofctrafo^\transpose\nvec}^{-1}\cofctrafo^\transpose}\nvec, \\
\noprcomp_4(\uvel,\upres,\reyd,\height)
&\coloneqq \projtangtrafozero \jump{\fluidstress(\uvel,\upres)\nvec}
-\cofctrafo\,\projtangtrafo \jump{\fluidstresstrafo(\uvel,\upres)\nvec}
-\np{\Reyrhod+\reyd}\cofctrafo\,\projtangtrafo \jump{\fluidstresstrafo(\Uvel,\Upres)\nvec}, \\
\noprcomp_5(\uvel,\upres,\reyd,\height)
&\coloneqq (\Reyrhod+\reyd)e_3\cdot\int_{\sphere^2} \bp{\jump{\fluidstress(\Uvel,\Upres)\nvec}-\jump{\fluidstresstrafo(\Uvel,\Upres)\nvec}\snorm{\cofctrafo^\transpose\nvec}\detctrafo}\,\dS \\
&\quad
+ e_3\cdot\int_{\sphere^2} \bp{\jump{\fluidstress(\uvel,\upres)\nvec}-\jump{\fluidstresstrafo(\uvel,\upres)\nvec}\snorm{\cofctrafo^\transpose\nvec}\detctrafo}\,\dS, \\
\noprcomp_6(\uvel,\upres,\reyd,\height) 
&\coloneqq -\int_{\sphere^2}\height^2+\frac{1}{3}\height^3 \,\dS,\\
\noprcomp_7(\uvel,\upres,\reyd,\height)
&\coloneqq \frac{\cofctrafo^\transpose\nvec}{\snorm{\cofctrafo^\transpose\nvec}^2}
\cdot\jump{\fluidstresstrafo(\uvel,\upres)\nvec}
-\nvec\cdot\jump{\fluidstress(\uvel,\upres)\nvec} 
+ \Reyrhod\frac{\cofctrafo^\transpose\nvec}{\snorm{\cofctrafo^\transpose\nvec}^2}
\cdot\jump{\fluidstresstrafo(\Uvel,\Upres)\nvec} \\*
&\quad
+\reyd\Bp{\frac{\cofctrafo^\transpose\nvec}{\snorm{\cofctrafo^\transpose\nvec}^2}
\cdot\jump{\fluidstresstrafo(\Uvel,\Upres)\nvec}
-\nvec\cdot\jump{\fluidstress(\Uvel,\Upres)\nvec}} \\*
&\quad
-\frac{1}{4\pi}\frac{\cofctrafo^\transpose\nvec}{\snorm{\cofctrafo^\transpose\nvec}}
\cdot \int_{\sphere^2}\bp{\frac{3}{2}\height^2+\height^3+\frac{1}{4}\height^4}\,\nvec\,\dS
+\frac{1}{4\pi}\bp{\nvec-\frac{\cofctrafo^\transpose\nvec}{\snorm{\cofctrafo^\transpose\nvec}}}
\cdot\int_{\sphere^2} \height\,\nvec \,\dS \\*
&\quad
-\rhod (1+\height) e_3 \cdot \nvec + \sigma\calg_\meancurv(\height).
\end{align*}

\section{Main Theorems}\label{ExistenceSteadySolutionSection}

The formulation \eqref{SystemAbstractFormulation} is compatible with the framework of function spaces introduced in Section \ref{Preliminaries_Section}. More specifically,
we shall show that $\lopr$ maps $\Xrey(\refdomain)$ homeomorphically onto $\Yspace(\refdomain)$, and
a solution to the fully nonlinear problem  \eqref{SystemAbstractFormulation} can
be established via the contraction mapping principle. We start with the first assertion:

\begin{thm}\label{ThmLinearOperatorHom}
Let $q\in(1,\frac{3}{2})$, $r\in(3,\infty)$ and $0<\snorm{\Rey}\leq\reybar$.
Then 
\begin{align*}
\lopr:\Xrey(\refdomain)\ra \Yspace(\refdomain)
\end{align*}
is a homeomorphism with
$\norm{\loprinv}\leq \Cc[ConstantHomeomorphism]{C}$ and $\const{ConstantHomeomorphism}=\const{ConstantHomeomorphism}(q,r,\reybar)$ \emph{independent} of $\Rey$.
\end{thm}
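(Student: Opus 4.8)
The plan is to exhibit $\lopr$, whose components are listed in \eqref{LinearOperatorAbstract}, as a \emph{block lower triangular} operator whose three diagonal blocks are isomorphisms with bounds uniform in $\Rey$; invertibility and the estimate then follow from a finite Neumann series. Two of the diagonal blocks are elementary (one scalar, one a second-order operator on $\sphere^2$) and the third is precisely Theorem \ref{ThmOseenOmegaHom}. Boundedness $\lopr\colon\Xrey(\refdomain)\to\Yspace(\refdomain)$ is routine: each entry of \eqref{LinearOperatorAbstract} is assembled from $\grad^2\uvel$, $\grad\upres$ and $\partial_3\uvel$ (controlled by the norms of $\XreyOne$ and $\Xspace_2$), from traces of $\fluidstress(\uvel,\upres)\nvec$ on $\sphere^2$ (bounded, by the trace theorem, in terms of $\norm{\uvel}_{\WSR{2}{r}(\calo)}+\norm{\upres}_{\WSR{1}{r}(\calo)}$ on a bounded collar $\calo$ of $\sphere^2$, which by Proposition \ref{PropEmbeddingsX1} is $\lesssim\norm{\uvel}_{\XreyOne}+\norm{\upres}_{\Xspace_2}$ with constant \emph{independent of $\Rey$}, since near $\sphere^2$ the local $\LR{r}$-norm of $\uvel$ is dominated by $\norm{\uvel}_{\XreyOne}$), and from $\sigma(\LaplaceBeltrami+2)\height$ together with rank-three functionals of $\height$; the image of the second and third components lands in the compatibility-constrained space $\Yspace_{2,3}$ because $\int_{\Omega^\inner}\Div\uvel\,\dx=\int_{\sphere^2}\uvel\cdot\nvec\,\dS$ and $\jump{\uvel}=0$.

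Now split the interior pressure as $\upres=\upres_0+c\,\mathbf 1_{\Omega^\inner}$, $c\in\R$, $\int_{\Omega^\inner}\upres_0\,\dx=0$ (a valid topological splitting of $\Xspace_2$, the functional $\upres\mapsto\int_{\Omega^\inner}\upres^\inner\,\dx$ being continuous on $\Xspace_2$), and regroup $\Xrey$ as the product of the blocks $(\uvel,\upres_0)$, $\reyd$ and $(c,\height)$, with $\Yspace$ correspondingly regrouped as $(\Yspace_1\times\Yspace_{2,3}\times\Yspace_4)$, $\Yspace_5$ and $(\Yspace_6\times\Yspace_7)$. With respect to these groupings $\lopr$ is block lower triangular: $\loprcomp{1-4}$ involves only $(\uvel,\upres_0)$ --- adding $c\,\mathbf 1_{\Omega^\inner}$ to the pressure alters neither $\Div\fluidstress(\uvel,\upres)$ nor $\projtang\jump{\fluidstress(\uvel,\upres)\nvec}$; $\loprcomp 5$ is $\reyd$ times the scalar $e_3\cdot\int_{\sphere^2}\jump{\fluidstress(\Uvel,\Upres)\nvec}\,\dS$ plus a bounded functional of $(\uvel,\upres_0)$ alone (the $c$-part contributes $e_3\cdot\int_{\sphere^2}(-c\nvec)\,\dS=0$); $\loprcomp 6$ involves only $\height$; and $\loprcomp 7$ equals $\cala_\meancurv\height+c\,\mathbf 1$ plus a bounded functional of $(\uvel,\upres_0)$ and of $\reyd$, where $\cala_\meancurv\height\coloneqq\sigma(\LaplaceBeltrami+2)\height+\tfrac1{4\pi}\nvec\cdot\int_{\sphere^2}\height\,\nvec\,\dS$ and the term $c\,\mathbf 1$ (the constant function $c$ on $\sphere^2$) appears because $-\nvec\cdot\jump{\fluidstress(0,c\,\mathbf 1_{\Omega^\inner})\nvec}=c$.

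It remains to verify that the three diagonal blocks are isomorphisms with inverse bounds uniform for $\Rey\in(0,\reybar]$. \emph{(i)} The $\loprcomp{1-4}$-block, restricted to pressures with vanishing interior mean, is exactly the operator of Theorem \ref{ThmOseenOmegaHom} up to the piecewise-constant coefficient $\rho$ multiplying $\partial_3\uvel$; since $\rho\equiv\rho_2$ on the unbounded domain $\Omega^\outer$, where the drift is of leading order at infinity, while on the bounded $\Omega^\inner$ the term $\rho_1\Rey\partial_3\uvel$ is a lower-order, Rellich-compact contribution --- handled just like the other lower-order terms in the proof of Theorem \ref{ThmOseenOmegaHom} and, in the limit of the contradiction argument there, eliminated by a uniqueness argument parallel to Theorem \ref{PropUniquenessWeakSolutionGeneral} --- this block is a homeomorphism onto $\Yspace_1\times\Yspace_{2,3}\times\Yspace_4$ with $\Rey$-uniform inverse bound. \emph{(ii)} The $\loprcomp 5$-block is multiplication by the nonzero, $\Rey$-independent scalar $e_3\cdot\int_{\sphere^2}\jump{\fluidstress(\Uvel,\Upres)\nvec}\,\dS=-\int_{\refdomain}2\mu\,\snorm{\symmgrad(\Uvel)}^2\,\dx$ (by \eqref{Linearization_PositivityCondTotalForce}; $(\Uvel,\Upres)$ solves the fixed Stokes problem \eqref{SystemFluidAux}). \emph{(iii)} The combined block $(c,\height)\mapsto\bigl(\int_{\sphere^2}\height\,\dS,\ \cala_\meancurv\height+c\,\mathbf 1\bigr)$ is an isomorphism $\R\times\WSR{3-1/r}{r}(\sphere^2)\to\R\times\WSR{1-1/r}{r}(\sphere^2)$, and is $\Rey$-independent: decomposing into the eigenspaces of $\LaplaceBeltrami$ on $\sphere^2$, $\cala_\meancurv$ acts as $\sigma\bp{2-\ell(\ell+1)}\id$ on the degree-$\ell$ eigenspace for $\ell\neq1$ and as $\tfrac13\id$ on the degree-one eigenspace --- the kernel of $\LaplaceBeltrami+2$, on which the rank-three correction contributes $\tfrac1{4\pi}\nvec\cdot\int_{\sphere^2}(\zeta\cdot a)\zeta\,\dS=\tfrac13(\nvec\cdot a)$ --- so, by standard elliptic theory on the closed manifold $\sphere^2$, $\cala_\meancurv$ is bijective; and since $\cala_\meancurv\mathbf 1=2\sigma\,\mathbf 1$ and $\int_{\sphere^2}\cala_\meancurv\height\,\dS=2\sigma\int_{\sphere^2}\height\,\dS$, given data $(\beta,\psi)$ one first sets $c=\tfrac{\sigma}{2\pi}\bigl(\int_{\sphere^2}\cala_\meancurv^{-1}\psi\,\dS-\beta\bigr)$ and then $\height=\cala_\meancurv^{-1}(\psi-c\,\mathbf 1)$. (The identity $\int_{\sphere^2}\nvec\cdot\jump{\fluidstress(\Uvel,\Upres)\nvec}\,\dS=0$ from \eqref{Linearization_VanishingNormalTotalForce} ensures the $\reyd$-dependent part of $\loprcomp 7$ is compatible with $\loprcomp 6$.)

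Writing $\lopr=D+N$ with $D$ the now-invertible, $\Rey$-uniformly bounded block-diagonal part and $N$ the strictly lower-triangular remainder, $N$ is nilpotent of order three, so $\lopr$ is a homeomorphism with $\loprinv=(\id-D^{-1}N+(D^{-1}N)^2)D^{-1}$; the off-diagonal blocks of $N$ see the velocity--pressure variables only through traces of $\fluidstress(\uvel,\upres)\nvec$ and through the fixed field $\Uvel$, hence are bounded in terms of $\norm{\uvel}_{\XreyOne}+\norm{\upres}_{\Xspace_2}$ with $\Rey$-independent constants (Proposition \ref{PropEmbeddingsX1} again), so $\norm{\loprinv}$ is bounded by a constant depending only on $q$, $r$, $\reybar$, as asserted; the case $\Rey<0$ follows by the reflection $x_3\mapsto-x_3$. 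The substantial analytic work --- the \textit{a priori} $\LR{r}$-theory and the uniqueness for the strongly coupled two-phase Oseen system --- is already done in Theorems \ref{ThmOseenOmegaHom} and \ref{PropUniquenessWeakSolutionGeneral}; the genuinely delicate point remaining, and the main obstacle, is the bookkeeping of step \emph{(iii)}: recognizing that the volume equation $\loprcomp 6$, the Solonnikov-modified curvature operator $\cala_\meancurv$, and the free additive constant of the interior pressure together form a \emph{square}, boundedly invertible block, and tracking the $\Rey$-uniformity of all the couplings.
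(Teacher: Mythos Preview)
Your argument is correct and is essentially a repackaging of the paper's proof. The paper proceeds sequentially --- first solving $\loprcomp{1-4}$ via Theorem~\ref{ThmOseenOmegaHom}, then adjusting the interior-pressure constant $c_\upres$ by the explicit formula \eqref{ThmLinearOperatorHom_presconstant}, then fixing $\reyd$ by \eqref{ThmLinearOperatorHom_DefOfReyD}, and finally solving for $\height$ --- which is exactly what your block lower-triangular decomposition and finite Neumann series encode. The one genuine organizational difference is in block \emph{(iii)}: the paper treats $\LaplaceBeltrami+2$ as Fredholm and splits $\height=\height_\parallel+\height_\bot$ via the projections $\proj$, $\projcompl$ onto and off the kernel $\vecspan\{\nvec_1,\nvec_2,\nvec_3\}$, with the pressure constant $c_\upres$ chosen \emph{a priori} so that $\loprcomp{6}(\height)=a_2$ comes out automatically after integrating $\loprcomp{7}$; you instead observe that the Solonnikov-modified operator $\cala_\meancurv$ is itself an isomorphism (the rank-three correction acts as $\tfrac13\id$ on the degree-one harmonics), and couple $c$ and $\height$ into a single $2\times2$ block solved in one step. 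Both routes give the same inverse. Your treatment also makes explicit two points the paper leaves implicit: the piecewise-constant $\rho$ in the drift term (harmless since $\rho\equiv\rho_2$ on $\Omega^\outer$ and the difference on the bounded $\Omega^\inner$ is lower order in the localization argument of Theorem~\ref{ThmOseenOmegaHom}), and the $\Rey$-uniform control of the trace $\jump{\fluidstress(\uvel,\upres)\nvec}$ on $\sphere^2$, which indeed follows from Proposition~\ref{PropEmbeddingsX1} in the regime $t\geq\tfrac{3q}{3-q}$ (valid here since $r>3$ and $q<\tfrac32$ force $r\geq\tfrac{3q}{3-q}$).
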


\begin{proof}
We first show that $\lopr$ is onto. To this end, we consider $\np{f,g,h_1,h_2,a_1,a_2,h_3}\in\Yspace(\refdomain)$ and establish existence of
$(\uvel, \tupres, \reyd,\height)\in\Xrey(\refdomain)$ such that $\lopr(\uvel, \tupres, \reyd,\height)=\np{f,g,h_1,h_2,a_1,a_2,h_3}$.
By Theorem \ref{ThmOseenOmegaHom} there is a solution $(\uvel,\upres)\in\XreyOne(\refdomain)\times\Xspace_2(\refdomain)$ to \eqref{SystemOseenOmegaGeneral} with $\Omega=\Omega_0$.
We put 
\begin{align}\label{ThmLinearOperatorHom_presconstant}
c_\upres \coloneqq  \frac{1}{\snorm{\sphere^2}} \Bp{ 2\sigma a_2 - \int_{\sphere^2} \nvec\cdot\jump{\fluidstress(\uvel,\upres)\nvec} - \int_{\sphere^2} h_3\,\dS }
\end{align}
and replace $\upres$ with
\[
\tupres\coloneqq 
\begin{pdeq}
&\upres + c_\upres &&\tin\B_1,\\
&\upres &&\tin\B^1.
\end{pdeq}
\]
Then $(\uvel,\tupres)$ still solves \eqref{SystemOseenOmegaGeneral}, whence
\begin{align}\label{ThmLinearOperatorHom_Leq1-4}
\loprcomp{1-4}(\uvel,\tupres) = (f,g,h_1,h_2).
\end{align}
Recalling \eqref{Linearization_PositivityCondTotalForce}, we can define
\begin{align}\label{ThmLinearOperatorHom_DefOfReyD}
\reyd\coloneqq  \Bp{e_3 \cdot \int_{\sphere^2}\jump{\fluidstress(\Uvel,\Upres)\nvec} \,\dS}^{-1}\Bp{a_1 - e_3 \cdot \int_{\sphere^2}\jump{\fluidstress(\uvel,\tupres)\nvec} \,\dS}
\end{align}
and thus obtain
\begin{align}\label{ThmLinearOperatorHom_Leq5}
\loprcomp{5}(\uvel,\tupres,\reyd)= a_1.
\end{align}
It remains to solve $\loprcomp{6}(\height)=a_2$ and $\loprcomp{7}(\uvel,\upres,\reyd,\height)=h_3$ with respect to $\height$. We briefly recall some properties of the operator $\Delta_\sphere+2$.
In particular, it is Fredholm in the setting $\Delta_\sphere+2:\WSR{3-1/r}{r}\np{\sphere^2}\ra\WSR{1-1/r}{r}\np{\sphere^2}$
(see for example \cite[Theorem 7.4.3]{Triebel_TheoryOfFunctionSpaces2}). It is well known, and easy to verify by a direct computation, that the components of the outer normal $\nvec$ on $\sphere^2$
span its kernel, that is, $\ker\np{\Delta_\sphere+2}=\vecspan\set{\nvec_1,\nvec_2,\nvec_3}$. We denote the projection onto this kernel and the corresponding complementary projection by 
\begin{align*}
\proj\psi\coloneqq \frac{1}{4\pi}\nvec\cdot\int_{\sphere^2}\psi\nvec\,\dS \quad\tand \quad \projcompl\coloneqq \id-\proj.
\end{align*}
The self-adjoint nature of $\Delta_\sphere+2$ implies that $\proj$ is also a projection onto the kernel of its adjoint $\np{\Delta_\sphere+2}^*$. The Fredholm property thus implies
that 
\begin{align}\label{ThmLinearOperatorHom_DeltaPlusTwoHomeomorphism}
\Delta_\sphere+2:\projcompl\WSR{3-1/r}{r}\np{\sphere^2}\ra\projcompl\WSR{1-1/r}{r}\np{\sphere^2} \quad\text{homeomorphically}.
\end{align}
We can therefore introduce
\begin{align*}
&\height_\parallel \coloneqq \proj\bp{h_3 +\reyd\nvec\cdot\jump{\fluidstress(\Uvel,\Upres)\nvec} +\nvec\cdot\jump{\fluidstress(\uvel,\tupres)\nvec}},\\
&\height_\bot \coloneqq 
\sigma^{-1}\np{\Delta_\sphere+2}^{-1}\,\projcompl\bp{h_3 +\reyd\nvec\cdot\jump{\fluidstress(\Uvel,\Upres)\nvec} +\nvec\cdot\jump{\fluidstress(\uvel,\tupres)\nvec}},
\end{align*}
and obtain a solution $\height\coloneqq \height_\parallel+\height_\bot \in\WSR{3-1/r}{r}\np{\sphere^2}$ to
\begin{align}\label{ThmLinearOperatorHom_Leq7}
\loprcomp{7}(\uvel,\tupres,\reyd,\height)=h_3.
\end{align}
Moreover, integrating \eqref{ThmLinearOperatorHom_Leq7} over $\sphere^2$ and recalling both the choice of $c_\upres$ in \eqref{ThmLinearOperatorHom_presconstant} and \eqref{Linearization_VanishingNormalTotalForce}, we observe that
\begin{align}\label{ThmLinearOperatorHom_Leq6}
\loprcomp{6}(\height)=a_2.
\end{align}
From \eqref{ThmLinearOperatorHom_Leq1-4}, \eqref{ThmLinearOperatorHom_Leq5}, \eqref{ThmLinearOperatorHom_Leq6} and \eqref{ThmLinearOperatorHom_Leq7} we deduce $\lopr(\uvel, \tupres, \reyd,\height)=\np{f,g,h_1,h_2,a_1,a_2,h_3}$ and consequently that $\lopr$ is onto.
Uniqueness of the solution $(\uvel, \tupres, \reyd,\height)$ is a direct consequence of Theorem \ref{ThmOseenOmegaHom} and \eqref{ThmLinearOperatorHom_DeltaPlusTwoHomeomorphism}, which
means that $\lopr$ is also injective. The operator is clearly continuous and therefore a homeomorphism.
Furthermore, from Theorem \ref{ThmOseenOmegaHom} we deduce the estimate
\begin{align*}
\norm{\np{\uvel,\tupres}}_{\XreyOne\times\Xspace_2}
&\leq \Cc{c}
\bp{\norm{(f,g,\hone,\htwo)}_{\Yspace_1\times\Yspace_{2,3}\times\Yspace_4}+ \snorm{c_\upres}  }\\
&\leq \Cc{c}
\bp{\norm{(f,g,\hone,\htwo)}_{\Yspace_1\times\Yspace_{2,3}\times\Yspace_4}+\norm{a_2}_{\Yspace_6}+ \norm{h_3}_{\Yspace_7}  }
\end{align*}
with $\Cclast{c}=\Cclast{c}(q,r,\reybar)$ \emph{independent} of $\Rey$. In turn, we estimate in \eqref{ThmLinearOperatorHom_DefOfReyD}
\begin{align*}
\snorm{\reyd} = \norm{\reyd}_{\Xspace_3} 
&\leq \Cc{c}
\bp{\norm{(f,g,\hone,\htwo)}_{\Yspace_1\times\Yspace_{2,3}\times\Yspace_4}+\norm{a_1}_{\Yspace_5}+\norm{a_2}_{\Yspace_6}+ \norm{h_3}_{\Yspace_7}  }
\end{align*}
with $\Cclast{c}=\Cclast{c}(q,r,\reybar)$ \emph{independent} of $\Rey$. Since additionally
\begin{align*}
\norm{\height}_{\Xspace_4}
&\leq
\norm{\height_\parallel}_{\WSR{3-1/r}{r}}+
\norm{\height_\bot}_{\WSR{3-1/r}{r}}\\
&\leq \Cc{c}\bp{
\norm{\proj\bp{h_3 +\reyd\nvec\cdot\jump{\fluidstress(\Uvel,\Upres)\nvec} +\nvec\cdot\jump{\fluidstress(\uvel,\tupres)\nvec}}}_{\WSR{3-1/r}{r}}}\\
&\qquad+\norm{\np{\Delta_\sphere+2}^{-1}}\norm{\projcompl\bp{h_3 +\reyd\nvec\cdot\jump{\fluidstress(\Uvel,\Upres)\nvec} +\nvec\cdot\jump{\fluidstress(\uvel,\tupres)\nvec}}}_{\WSR{1-1/r}{r}}\\
&\leq \Cc{c}\bp{
\norm{h_3}_{\WSR{1-1/r}{r}} + \snorm{\reyd} + \norm{\jump{\fluidstress(\uvel,\tupres)\nvec}}_{\WSR{1-1/r}{r}}
+\snormL{\int_{\sphere^2}h_3\,\dS} + \snormL{\int_{\sphere^2}\nvec\cdot\jump{\fluidstress(\uvel,\tupres)\nvec}\,\dS}
}\\
&\leq \Cc{c}\bp{
\norm{h_3}_{\WSR{1-1/r}{r}} + \snorm{\reyd} + \norm{\np{\uvel,\tupres}}_{\XreyOne\times\Xspace_2}
},
\end{align*}
we conclude
\begin{align*}
\norm{(\uvel, \tupres, \reyd,\height)}_{\Xrey(\refdomain)}\leq \Cc{c}\norm{\np{f,g,h_1,h_2,a_1,a_2,h_3}}_{\Yspace(\refdomain)}
\end{align*}
with $\Cclast{c}=\Cclast{c}(q,r,\reybar)$ \emph{independent} of $\Rey$. It follows that
$\norm{\loprinv}\leq \Cc{c}$ with $\Cclast{c}=\Cclast{c}(q,r,\reybar)$ \emph{independent} of $\Rey$.
\end{proof}

The proof that the composition $\loprinv\circ\nopr$ is a contraction is prepared in the following
two lemmas.
We first establish estimates of the change-of-coordinate matrices.

\begin{lem}\label{LemmaEstimatesCtrafo}
Let $r\in(3,\infty)$. There is $\Cc[heightminforinverse]{delta}>0$ such that
for all 
$\height_1, \height_2\in\WSR{3-1/r}{r}(\sphere^2)$ with $\norm{\height_j}_{\WSR{3-1/r}{r}}\leq\const{heightminforinverse}$ $(j=1,2)$ the
following estimates are valid:
\begin{align*}
&\norm{\idmatrix-\cofctrafoone}_{\WSR{1}{\infty}}
\leq 
\Cc[EstimatesCtrafo]{C}\norm{\height_1}_{\WSR{3-1/r}{r}},
&&\norm{\cofctrafoone-\cofctrafotwo}_{\WSR{1}{\infty}}
\leq 
\const{EstimatesCtrafo}\norm{\height_1-\height_2}_{\WSR{3-1/r}{r}},\\
&\norm{\idmatrix-\gradctrafoone^{-1}}_{\WSR{1}{\infty}}
\leq 
\const{EstimatesCtrafo}\norm{\height_1}_{\WSR{3-1/r}{r}}, 
&&\norm{\gradctrafoone^{-1}-\gradctrafotwo^{-1}}_{\WSR{1}{\infty}}
\leq 
\const{EstimatesCtrafo}\norm{\height_1-\height_2}_{\WSR{3-1/r}{r}},\\
&\norm{1-\detctrafoone}_{\WSR{1}{\infty}}
\leq 
\const{EstimatesCtrafo}\norm{\height_1}_{\WSR{3-1/r}{r}}, 
&&\norm{\detctrafoone-\detctrafotwo}_{\WSR{1}{\infty}}
\leq 
\const{EstimatesCtrafo}\norm{\height_1-\height_2}_{\WSR{3-1/r}{r}}
\end{align*}
where $\const{EstimatesCtrafo}=\const{EstimatesCtrafo}(\const{heightminforinverse},r)$.
\end{lem}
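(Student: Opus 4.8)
The plan is to deduce all six estimates from two facts. First, the map $\height\mapsto\grad\extopr(\height)$ is \emph{linear} and bounded from $\WSR{3-1/r}{r}(\sphere^2)$ into $\WSR{1}{\infty}(\R^3\setminus\sphere^2)^{3\times3}$: this follows by composing \eqref{ConstrucntionOfCoordinateTrafo_ExtOprEst} with the Sobolev embedding $\WSR{2}{r}(\R^3\setminus\sphere^2)\embeds\WSR{1}{\infty}(\R^3\setminus\sphere^2)$, valid because $r>3$, so that $\norm{\grad\extopr(\height)}_{\WSR{1}{\infty}}\leq\Cc{c}\,\norm{\height}_{\WSR{3-1/r}{r}}$ with an absolute constant. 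Second, $\WSR{1}{\infty}$ is a Banach algebra on each of the two components $\ball_1$ and $\ball^1$, so that polynomial expressions in the entries of a $\WSR{1}{\infty}$ matrix stay in $\WSR{1}{\infty}$ with the obvious norm bounds. I would then fix $\const{heightminforinverse}\leq\const{heightmin}$ --- so that Lemma \ref{ConstrucntionOfCoordinateTrafo} applies and $\detctrafo>0$ --- and small enough that $\norm{\grad\extopr(\height_j)}_{\WSR{1}{\infty}}\leq\tfrac12$ whenever $\norm{\height_j}_{\WSR{3-1/r}{r}}\leq\const{heightminforinverse}$.

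For $\gradctrafo$ the two estimates are immediate from \eqref{EqDefCtrafoF}, since $\idmatrix-\gradctrafoone=-\grad\extopr(\height_1)$ and, by linearity of $\extopr$, $\gradctrafoone-\gradctrafotwo=\grad\extopr(\height_1-\height_2)$. For $\detctrafo$ and $\cofctrafo$ I would use that, by \eqref{EqDefCtrafoDet} and \eqref{EqDefCtrafoCof}, one has $\detctrafo=p\bp{\grad\extopr(\height)}$ and $\cofctrafo=P\bp{\grad\extopr(\height)}$ for a fixed scalar polynomial $p$ and a fixed matrix-valued polynomial $P$ in the nine entries, with $p(0)=1$, $P(0)=\idmatrix$, and linear parts $\Div\extopr(\height)$ and $\Div\extopr(\height)\,\idmatrix-\grad\extopr(\height)$, respectively. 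Splitting off the linear part and bounding every remaining monomial (of degree $\geq2$) by the Banach-algebra property gives $\norm{1-\detctrafoone}_{\WSR{1}{\infty}}\leq\Cc{c}\norm{\height_1}_{\WSR{3-1/r}{r}}$ and $\norm{\idmatrix-\cofctrafoone}_{\WSR{1}{\infty}}\leq\Cc{c}\norm{\height_1}_{\WSR{3-1/r}{r}}$, since each higher-order monomial carries at least one extra factor $\norm{\grad\extopr(\height_1)}_{\WSR{1}{\infty}}\leq\Cc{c}\,\const{heightminforinverse}$. For the Lipschitz bounds, writing $p(A_1)-p(A_2)$ with $A_j\coloneqq\grad\extopr(\height_j)$ as a telescoping sum of products of entries of $A_1$ and $A_2$ in which one factor is always an entry of $A_1-A_2=\grad\extopr(\height_1-\height_2)$, the Banach-algebra property together with $\norm{A_j}_{\WSR{1}{\infty}}\leq\tfrac12$ yields $\norm{\detctrafoone-\detctrafotwo}_{\WSR{1}{\infty}}\leq\Cc{c}\norm{\height_1-\height_2}_{\WSR{3-1/r}{r}}$; the same argument applied to $P$ gives the bound for $\cofctrafoone-\cofctrafotwo$.

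Finally, for the inverse I would use the adjugate identity $\gradctrafo^{-1}=\detctrafo^{-1}\,\cofctrafo$. Since $\norm{1-\detctrafoone}_{\WSR{1}{\infty}}\leq\tfrac12$, the Neumann series $\detctrafoone^{-1}=\sum_{k\geq0}\bp{1-\detctrafoone}^k$ converges in the Banach algebra $\WSR{1}{\infty}$, giving $\norm{\detctrafoone^{-1}}_{\WSR{1}{\infty}}\leq2$, $\norm{1-\detctrafoone^{-1}}_{\WSR{1}{\infty}}\leq2\,\norm{1-\detctrafoone}_{\WSR{1}{\infty}}\leq\Cc{c}\norm{\height_1}_{\WSR{3-1/r}{r}}$, and $\norm{\detctrafoone^{-1}-\detctrafotwo^{-1}}_{\WSR{1}{\infty}}=\norm{\detctrafoone^{-1}\bp{\detctrafotwo-\detctrafoone}\detctrafotwo^{-1}}_{\WSR{1}{\infty}}\leq\Cc{c}\norm{\height_1-\height_2}_{\WSR{3-1/r}{r}}$ (and likewise with the indices swapped). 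Then, from $\idmatrix-\gradctrafoone^{-1}=\detctrafoone^{-1}\bp{(\detctrafoone-1)\,\idmatrix+(\idmatrix-\cofctrafoone)}$ and $\gradctrafoone^{-1}-\gradctrafotwo^{-1}=\bp{\detctrafoone^{-1}-\detctrafotwo^{-1}}\cofctrafoone+\detctrafotwo^{-1}\bp{\cofctrafoone-\cofctrafotwo}$, the two remaining estimates follow from the $\detctrafo$- and $\cofctrafo$-bounds of the previous paragraph together with the uniform bounds $\norm{\cofctrafoone}_{\WSR{1}{\infty}}\leq\Cc{c}$ and $\norm{\detctrafotwo^{-1}}_{\WSR{1}{\infty}}\leq2$. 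I do not expect a genuine obstacle here; the only points that require care are the embedding $\WSR{2}{r}\embeds\WSR{1}{\infty}$ (hence the standing assumption $r>3$), the verification that $\WSR{1}{\infty}$ on each of the two subdomains is a Banach algebra, and choosing $\const{heightminforinverse}$ small enough that the higher-order monomials are absorbed and the Neumann series for $\detctrafoone^{-1}$ converges.
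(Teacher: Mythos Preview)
Your proof is correct and follows essentially the same approach as the paper: exploit the polynomial structure of $\cofctrafo$ and $\detctrafo$ in the entries of $\grad\extopr(\height)$, combine with an algebra property and the Sobolev embedding $\WSR{2}{r}\embeds\WSR{1}{\infty}$ (valid since $r>3$), and handle $\gradctrafo^{-1}$ via the adjugate identity $\gradctrafo^{-1}=\detctrafo^{-1}\cofctrafo$ after choosing $\const{heightminforinverse}$ small enough to control $\detctrafo^{-1}$. The only cosmetic difference is that the paper uses $\WSR{2}{r}$ as the Banach algebra and then embeds into $\WSR{1}{\infty}$, whereas you work directly in $\WSR{1}{\infty}$ and make the Neumann-series argument for $\detctrafo^{-1}$ explicit; both are equivalent here.
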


\begin{proof}
Recalling \eqref{EqDefCtrafoCof}, we observe that $\idmatrix-\cofctrafoone$ contains only
terms of first and second order with respect to components of $\grad E(\height_1)$.
Utilizing that $\WSR{2}{r}(\R^3\setminus\sphere^2)$ is an algebra for $r>3$, and the Sobolev embedding
$\WSR{2}{r}(\R^3\setminus\sphere^2)\embeds\WSR{1}{\infty}(\R^3\setminus\sphere^2)$,
we deduce
\begin{align*}
\norm{\idmatrix-\cofctrafoone}_{\WSR{1}{\infty}}
\leq 
\Cc{c}\norm{\idmatrix-\cofctrafoone}_{\WSR{2}{r}}
\leq
\Cc{c} \bp{1+\norm{\grad E(\height_1)}_{\WSR{2}{r}}} \norm{\grad E(\height_1)}_{\WSR{2}{r}}.
\end{align*}
The first assertion of the lemma then 
follows from \eqref{ConstrucntionOfCoordinateTrafo_ExtOprEst} in Lemma
\ref{ConstrucntionOfCoordinateTrafo}. The next assertions follows in a similar manner.
Concerning the estimates involving $\gradctrafoone^{-1}$, we recall from \eqref{EqDefCtrafoF}--\eqref{EqDefCtrafoCof} that $\gradctrafoone^{-1}=\detctrafoone^{-1}\cofctrafoone$.
Consequently, we obtain an estimate of $\norm{\idmatrix-\gradctrafoone^{-1}}_{\WSR{1}{\infty}}$ as above, provided $\detctrafoone$ is bounded away from $0$. To this end, we recall
\eqref{EqDefCtrafoDet} and choose $\const{heightminforinverse}$ so small that $\detctrafoone>\half$ for $\norm{\height_1}_{\WSR{3-1/r}{r}}\leq\const{heightminforinverse}$.
One may now verify the rest of the assertions analogously. 
\end{proof}

The linearization \eqref{SystemAbstractFormulation} is a result of expressing the velocity field and pressure term
as a perturbation \eqref{EqLinearizationAtAuxField} around a truncated auxiliary field $\np{\Uvel_R,\Upres_R}$.
The truncation is necessary to avoid right-hand side terms in \eqref{SystemAbstractFormulation} with inadmissible decay properties.
Instead, compactly supported right-hand side terms appear. Suitable estimates of these terms are established in the following lemma.
In particular, the magnitude of their norms are estimated in terms of the distance $R$ of the truncation $\cutoff_R$ from the drop domain: 

\begin{lem}\label{LemmaAuxFieldTruncatedEstimates}
Let $q\in\bp{1,\frac{3}{2}}$, $r\in(3,\infty)$ and $\const{heightminforinverse}$ be the constant from Lemma \ref{LemmaEstimatesCtrafo}. For all
$\height_1, \height_2\in\WSR{3-1/r}{r}(\sphere^2)$ with $\norm{\height_j}_{\WSR{3-1/r}{r}}\leq\const{heightminforinverse}$ $(j=1,2)$ 
\begin{align}\label{LemmaAuxFieldTruncatedEstimates_Ests}
\begin{aligned}
&\norm{\Div \fluidstresstrafoOne (\Uvel_R,\Upres_R)}_{\Yspace_1}\leq\Cc[LemmaAuxFieldTruncatedEstimatesConst]{C}(R^{-3+3/q}+\norm{\height_1}_{\WSR{3-1/r}{r}}),\\
&\norm{\Div \fluidstresstrafoOne (\Uvel_R,\Upres_R)-\Div \fluidstresstrafoTwo (\Uvel_R,\Upres_R)}_{\Yspace_1}\leq\const{LemmaAuxFieldTruncatedEstimatesConst}\norm{\height_1-\height_2}_{\WSR{3-1/r}{r}},\\
&\norm{\Div \np{\cofctrafoone\Uvel_R}}_{\Yspace_2}\leq\const{LemmaAuxFieldTruncatedEstimatesConst}(R^{-3+3/q}+\norm{\height_1}_{\WSR{3-1/r}{r}}),\\
&\norm{\Div \np{\cofctrafoone\Uvel_R}-\Div \np{\cofctrafotwo\Uvel_R}}_{\Yspace_2}\leq\const{LemmaAuxFieldTruncatedEstimatesConst}\norm{\height_1-\height_2}_{\WSR{3-1/r}{r}},
\end{aligned}
\end{align}
where $\const{LemmaAuxFieldTruncatedEstimatesConst}=\const{LemmaAuxFieldTruncatedEstimatesConst}(q,r,\const{heightminforinverse})$. Moreover,
\begin{align}\label{LemmaAuxFieldTruncatedEstimates_EstDriftterm}
\begin{aligned}
&\norm{\cofctrafoone\Uvel_R\cdot\grad\Uvel_R}_{\Yspace_1}\leq\Cc[LemmaAuxFieldTruncatedEstimatesConst2]{C} ,\\
&\norm{\cofctrafoone\Uvel_R\cdot\grad\Uvel_R-\cofctrafotwo\Uvel_R\cdot\grad\Uvel_R}_{\Yspace_1}\leq\const{LemmaAuxFieldTruncatedEstimatesConst2}\norm{\height_1-\height_2}_{\WSR{3-1/r}{r}},\\
&\norm{\grad\Uvel_R\cofctrafoone e_3}_{\Yspace_1}\leq\const{LemmaAuxFieldTruncatedEstimatesConst2},\\
&\norm{\grad\Uvel_R\cofctrafoone e_3-\grad\Uvel_R\cofctrafotwo e_3}_{\Yspace_1}\leq\const{LemmaAuxFieldTruncatedEstimatesConst2}\norm{\height_1-\height_2}_{\WSR{3-1/r}{r}},
\end{aligned}
\end{align}
where $\const{LemmaAuxFieldTruncatedEstimatesConst2}=\const{LemmaAuxFieldTruncatedEstimatesConst2}(q,r,\const{heightminforinverse})$.
\end{lem}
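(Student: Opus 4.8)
The plan is to decompose each of the listed quantities, along the two length scales built into the construction \eqref{AuxfieldsDef}, into a piece supported in the fixed ball $\ball_3$ and a ``commutator'' piece supported in the truncation shell $\ball_{2R,R}$. On $\ball_3$ one has $\cutoff_R\equiv1$ (recall $R>4$), hence $\Uvel_R=\Uvel$ and $\Upres_R=\Upres$, and there $(\Uvel,\Upres)$ solves the homogeneous Stokes system \eqref{SystemFluidAux}, so $\Div\fluidstress(\Uvel,\Upres)=0$ and $\Div\Uvel=0$. Writing, via \eqref{fluidstresstrafoDef}, $\fluidstresstrafoOne(\Uvel,\Upres)=\fluidstress(\Uvel,\Upres)+\restterm_1$ with $\restterm_1$ a sum of products of one factor from $\set{\cofctrafoone-\idmatrix,\ \gradctrafoone^{-1}-\idmatrix,\ \gradctrafoone^{-\transpose}-\idmatrix}$ with one factor from $\set{\grad\Uvel,\Upres}$, one obtains $\Div\fluidstresstrafoOne(\Uvel_R,\Upres_R)=\Div\restterm_1$ on $\ball_3$, and similarly $\cofctrafoone\Uvel_R=\Uvel+(\cofctrafoone-\idmatrix)\Uvel$, so $\Div(\cofctrafoone\Uvel_R)=\Div[(\cofctrafoone-\idmatrix)\Uvel]$ on $\ball_3$; both remainders are supported in $\ball_3$ and depend on $\height_1$ only through the change-of-coordinate matrices. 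On the shell $\ball_{2R,R}$ the matrices reduce to $\idmatrix$ by the construction of $\extopr$ in Lemma \ref{ConstrucntionOfCoordinateTrafo}, so $\Div\fluidstresstrafoOne(\Uvel_R,\Upres_R)=\Div\fluidstress(\cutoff_R\Uvel,\cutoff_R\Upres)$, which, using again $\Div\fluidstress(\Uvel,\Upres)=0$ and $\Div\Uvel=0$, is a commutator every term of which carries at least one derivative on $\cutoff_R$, while $\Div(\cofctrafoone\Uvel_R)=\Div(\cutoff_R\Uvel)=\grad\cutoff_R\cdot\Uvel$.

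For the $\ball_3$-pieces I would invoke Lemma \ref{LemmaEstimatesCtrafo}, upgraded to $\WSR{2}{r}$-estimates: by \eqref{EqDefCtrafoDet}--\eqref{EqDefCtrafoCof} each of $\cofctrafoone-\idmatrix$ and $\gradctrafoone^{-1}-\idmatrix$ is a polynomial without constant term in the entries of $\grad\extopr(\height_1)\in\WSR{2}{r}(\refdomain)$, and since $\WSR{2}{r}$ is a Banach algebra for $r>3$, \eqref{ConstrucntionOfCoordinateTrafo_ExtOprEst} gives the $\WSR{2}{r}$-norms bounded by $C\norm{\height_1}_{\WSR{3-1/r}{r}}$, and the differences by $C\norm{\height_1-\height_2}_{\WSR{3-1/r}{r}}$, once $\norm{\height_1}_{\WSR{3-1/r}{r}}$ is small. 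Combined with the local integrability $\Uvel\in\LR{\infty}$ and $\grad\Uvel,\Upres,\grad^2\Uvel,\grad\Upres\in\LR{r}(\ball_3\setminus\sphere^2)$ from \eqref{UvelUpresIntegrability}, a Leibniz expansion of $\Div\restterm_1$ and of $\Div[(\cofctrafoone-\idmatrix)\Uvel]$ (together with one further gradient of the latter) followed by Hölder's inequality bounds the first piece in $\LR{r}(\ball_3)$ and the second in $\WSR{1}{r}(\ball_1)\times\WSR{1}{r}(\ball_3\cap\ball^1)$, hence, since $\ball_3$ is bounded and $r>3>\tfrac{3q}{3-q}>q$, in the relevant norms of $\Yspace_1$, respectively $\Yspace_2$, restricted to $\ball_3$, by $C\norm{\height_1}_{\WSR{3-1/r}{r}}$, with the corresponding Lipschitz bounds $C\norm{\height_1-\height_2}_{\WSR{3-1/r}{r}}$. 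This produces the $\norm{\height_1}$- and $\norm{\height_1-\height_2}$-terms in \eqref{LemmaAuxFieldTruncatedEstimates_Ests}.

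For the shell-pieces, $\snorm{\partial^\alpha\cutoff_R}\le CR^{-\snorm{\alpha}}$ together with the pointwise decay \eqref{UVelUpresPointwiseDecayEst} (so on $\ball_{2R,R}$ one has $\snorm{\Uvel}\le CR^{-1}$, $\snorm{\grad\Uvel}\le CR^{-2}$, $\snorm{\Upres}\le CR^{-2}$) yields $\snorm{\Div\fluidstress(\cutoff_R\Uvel,\cutoff_R\Upres)}\le CR^{-3}$, $\snorm{\grad\cutoff_R\cdot\Uvel}\le CR^{-2}$ and $\snorm{\grad(\grad\cutoff_R\cdot\Uvel)}\le CR^{-3}$ pointwise; integrating over a set of measure $\sim R^{3}$ bounds the $\Yspace_1$-, respectively $\Yspace_2$-, norm of these pieces by $C(R^{-3+3/q}+R^{-3+3/r})\le CR^{-3+3/q}$, and, being $\height$-independent, they disappear from the Lipschitz differences; adding the two contributions gives \eqref{LemmaAuxFieldTruncatedEstimates_Ests}. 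The estimates \eqref{LemmaAuxFieldTruncatedEstimates_EstDriftterm} follow in the same way: $\cofctrafoone\Uvel_R\cdot\grad\Uvel_R$ equals $\cofctrafoone\Uvel\cdot\grad\Uvel$ on $\ball_3$ and $\cutoff_R^{2}\,\Uvel\cdot\grad\Uvel+\cutoff_R(\grad\cutoff_R\cdot\Uvel)\Uvel$ on $\ball_3^c$, and $\Uvel\cdot\grad\Uvel=\bigo(\snorm{x}^{-3})\in\LR{q}\cap\LR{r}(\refdomain)$ for all $q,r>1$ while $\norm{\cofctrafoone}_\infty\le2$; $\grad\Uvel_R\cofctrafoone e_3$ is a fixed compactly supported field in $\LR{q}\cap\LR{r}(\refdomain)$; and in both cases the $\height$-dependence localises to $\ball_3$, so the Lipschitz bounds reduce, as above, to $\norm{(\cofctrafoone-\cofctrafotwo)\,\Uvel\,(\grad\Uvel\ \text{or}\ e_3)}_{\LR{q}\cap\LR{r}(\ball_3)}\le C\norm{\height_1-\height_2}_{\WSR{3-1/r}{r}}$ via Lemma \ref{LemmaEstimatesCtrafo}. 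The main obstacle is purely organisational: matching each of the many summands to the correct (homogeneous versus inhomogeneous, and correct Lebesgue exponent) slot of $\Yspace_1$ and $\Yspace_2$, and checking that the shell terms land there with the asserted power of $R$; once the decomposition is in place, everything reduces to Hölder's inequality and the (upgraded) Lemma \ref{LemmaEstimatesCtrafo}.
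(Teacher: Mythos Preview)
Your proposal is correct and follows essentially the same route as the paper's proof: both split each quantity into a piece supported where the change-of-coordinate matrices differ from the identity (the paper uses $\ball_4$, consistent with the statement of Lemma~\ref{ConstrucntionOfCoordinateTrafo}, rather than your $\ball_3$) and a commutator piece supported on the truncation shell $\ball_{2R,R}$, then handle the former via Lemma~\ref{LemmaEstimatesCtrafo} and the latter via the pointwise decay \eqref{UVelUpresPointwiseDecayEst}. Your presentation is in fact slightly more explicit than the paper's about why the Lipschitz differences see only the first piece (since $\cofctrafoone-\cofctrafotwo$ is supported in the fixed ball), but the underlying argument is the same.
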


\begin{proof}
Let $\height=\height_1$. Recalling from Lemma \ref{ConstrucntionOfCoordinateTrafo} that $\cofctrafo(x)=\gradctrafo(x)=\idmatrix$ for $\snorm{x}\geq4$, we utilize Lemma \ref{LemmaEstimatesCtrafo} to estimate
\begin{align*}
&\norm{\Div \fluidstresstrafoOne (\Uvel_R,\Upres_R)}_{\Yspace_1}\\
&\qquad\leq\norm{\Div \fluidstresstrafoOne (\Uvel_R,\Upres_R) - \Div \fluidstress (\Uvel_R,\Upres_R)}_{\LR{q}\cap\LR{r}} + \norm{\Div \fluidstress (\Uvel_R,\Upres_R)}_{\LR{q}\cap\LR{r}}\\
&\qquad\leq \norm{{\mu \np{\grad\Uvel_R\gradctrafo^{-1}\cofctrafo^\transpose- \grad\Uvel_R}  + \mu\np{\gradctrafo^{-\transpose}\grad\Uvel_R^\transpose\cofctrafo^\transpose-\grad\Uvel_R^\transpose }} 
- \np{\Upres_R \cofctrafo^\transpose - \Upres_R \idmatrix}}_{\WSRhom{1}{q}(\ball_4)\cap\WSRhom{1}{r}(\ball_4)}\\
&\qquad \quad+\norm{\Div \fluidstress (\Uvel_R,\Upres_R)}_{\LR{q}\cap\LR{r}}\\
&\qquad\leq \Cc{c} \np{\norm{\height}^2_{\WSR{3-1/r}{r}}+\norm{\height}_{\WSR{3-1/r}{r}}}
\np{\norm{\grad\Uvel_R}_{\WSR{1}{q}(\ball_4)\cap\WSR{1}{r}(\ball_4)}+\norm{\Upres_R}_{\WSR{1}{q}(\ball_4)\cap\WSR{1}{r}(\ball_4)}\\
&\qquad \quad+\norm{\Div \fluidstress (\Uvel_R,\Upres_R)}_{\LR{q}\cap\LR{r}}}.
\end{align*}
Recalling the truncation \eqref{AuxfieldsDef}, the pointwise decay of the auxiliary fields \eqref{UVelUpresPointwiseDecayEst}, and that $\supp\grad\cutoff_R\subset\ball_{2R,R}$ with  $\snorm{\grad\cutoff_R(x)}\leq \Cc{c}R^{-1}$ as well as $\snorm{\grad^2\cutoff_R(x)}\leq \Cc{c}R^{-2}$, 
we further obtain
\begin{align*}
\norm{\Div \fluidstress(\Uvel_R,\Upres_R)}_q &=
\norm{\Div \bp{\mu\np{\grad\nb{\cutoff_R\Uvel}+\grad\nb{\cutoff_R\Uvel}^\transpose}-\cutoff_R\Upres}}_q\\
&\leq
\Cc{c}\bp{\norm{R^{-2}\Uvel}_{\LR{q}\np{\ball_{2R,R}}}+
\norm{R^{-1}\grad\Uvel}_{\LR{q}\np{\ball_{2R,R}}}+
\norm{R^{-1}\Upres}_{\LR{q}\np{\ball_{2R,R}}}}\\
&\leq
\Cc{c}R^{-3+3/q}.
\end{align*}
Since $r>q$, we obtain an even better estimate for $\norm{\Div \fluidstress(\Uvel_R,\Upres_R)}_r$ with respect to decay in $R$, and thus
conclude the first assertion of the lemma. The other inequalities in \eqref{LemmaAuxFieldTruncatedEstimates_Ests} follow in a similar manner. 

The most critical estimate in \eqref{LemmaAuxFieldTruncatedEstimates_EstDriftterm} is the second one.
Employing Lemma \ref{LemmaEstimatesCtrafo} together with the integrability properties \eqref{UvelUpresIntegrability} 
and the pointwise decay \eqref{UVelUpresPointwiseDecayEst} of the auxiliary fields,
we conclude
\begin{align*}
&\norm{\cofctrafoone\Uvel_R\cdot\grad\Uvel_R -\cofctrafotwo\Uvel_R\cdot\grad\Uvel_R}_{\Yspace_1}
\leq\norm{\cofctrafoone-\cofctrafotwo}_{\infty}
\norm{\cutoff_R\np{\Uvel\cdot\grad\cutoff_R}\Uvel+\cutoff_R^2\Uvel\cdot\grad\Uvel}_{\LR{q}\cap\LR{r}} \\
&\quad\leq\Cc{c}\norm{\height_1-\height_2}_{\WSR{3-1/r}{r}}\bp{
\norm{R^{-1}\snorm{\Uvel}^2}_{\LR{q}(\ball_{2R,R})\cap\LR{r}(\ball_{2R,R})}
+\norm{\Uvel}_{\LR{3q}\cap\LR{3r}}\norm{\grad\Uvel}_{\LR{3q/2}\cap\LR{3r/2}}
}\\
&\quad\leq\Cc{c}\norm{\height_1-\height_2}_{\WSR{3-1/r}{r}}\bp{R^{-3+1/q}+R^{-3+1/r}+\Cc{c}}
\leq\Cc{c}\norm{\height_1-\height_2}_{\WSR{3-1/r}{r}}
\end{align*}
since $R>4$.
The remaining estimates in \eqref{LemmaAuxFieldTruncatedEstimates_EstDriftterm} are verified in a similar fashion.
\end{proof}

We are now in a position to show existence of a solution to \eqref{SystemAbstractFormulation}. 

\begin{thm}\label{ThmExistenceNonLinearFixedDomain}
Let $q\in\big(1,\frac{4}{3}\big]$, $r\in(3,\infty)$ and $\frac{3}{4}<\ppot<1$. There is an $\epsilon>0$ such that for all $0<\snorm{\rhod}<\epsilon$ there is an $R>0$ and a solution $(\uvel,\upres,\reyd,\height)\in\Xreyrhod(\Omega_0)$ to
\begin{align}\label{ThmExistenceNonLinearFixedDomain_Eq}
\loprrhod\np{\uvel,\upres,\reyd,\height}=\nopr(\uvel,\upres,\reyd,\height),
\end{align}
which satisfies 
\begin{align}\label{ThmExistenceNonLinearFixedDomain_Est}
\norm{(\uvel,\upres,\reyd,\height)}_{\Xrey} 
\leq \snorm{\rhod}^{\ppot}.
\end{align}
This solution is unique in the class of elements in $\Xreyrhod(\Omega_0)$ 
satisfying \eqref{ThmExistenceNonLinearFixedDomain_Est}.
\end{thm}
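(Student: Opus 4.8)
The plan is to recast \eqref{ThmExistenceNonLinearFixedDomain_Eq} as a fixed‑point equation for
\begin{align*}
\mmap_\rhod\coloneqq\loprrhodinv\circ\nopr
\end{align*}
and to apply the contraction mapping principle. This is legitimate by Theorem \ref{ThmLinearOperatorHom}: the linear operator $\loprrhod\colon\Xreyrhod(\refdomain)\to\Yspace(\refdomain)$ is a homeomorphism whose inverse is bounded \emph{uniformly} in $\rhod$, even though $\Reyrhod\to0$ as $\rhod\to0$ by \eqref{EquationReyAux}. Any fixed point of $\mmap_\rhod$ in $\Xreyrhod(\refdomain)$ solves \eqref{ThmExistenceNonLinearFixedDomain_Eq}. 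I would select the truncation radius $R=R(\rhod)$ (fixed below, with $R\to\infty$ as $\rhod\to0$), work on the closed ball $\calb\coloneqq\setcl{z\in\Xreyrhod(\refdomain)}{\norm{z}_{\Xrey}\leq\snorm{\rhod}^\ppot}$, and verify that, for all sufficiently small $\snorm{\rhod}$, the map $\nopr$ sends $\calb$ into the ball of radius $\norm{\loprrhodinv}^{-1}\snorm{\rhod}^\ppot$ and is Lipschitz on $\calb$ with constant at most $\tfrac{1}{2}\norm{\loprrhodinv}^{-1}$. Then $\mmap_\rhod$ is a self‑map and a contraction on $\calb$, hence has a unique fixed point there, which is the asserted solution satisfying \eqref{ThmExistenceNonLinearFixedDomain_Est}; uniqueness within the class \eqref{ThmExistenceNonLinearFixedDomain_Est} is built into the fixed‑point statement.

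The core is the estimation of $\nopr=(\noprcomp_1,\dots,\noprcomp_7)$ in the norm of $\Yspace$, which I would organize by splitting the terms of each component according to their dependence on $z=(\uvel,\upres,\reyd,\height)$. The (to leading order) $z$‑independent terms — the truncation defects $\Reyrhod\,\Div\fluidstresstrafo(\Uvel_R,\Upres_R)$ and $\Reyrhod\,\Div(\cofctrafo\Uvel_R)$ at $\height=0$, the drift‑type terms $\Reyrhod^2\,\Uvel_R\cdot\grad\Uvel_R$ and $\Reyrhod^2\,\grad\Uvel_R\,e_3$, the $\Reyrhod$‑weighted boundary integrals of $\fluidstress(\Uvel,\Upres)$, and the gravity/buoyancy datum $\rhod\,e_3\cdot\nvec$ — are handled by Lemma \ref{LemmaAuxFieldTruncatedEstimates} together with the integrability \eqref{UvelUpresIntegrability} and pointwise decay \eqref{UVelUpresPointwiseDecayEst} of $(\Uvel,\Upres)$: the compactly supported defects decay like $\snorm{\rhod}\,R^{-3+3/q}$, and what remains is $O(\snorm{\rhod})+O(\snorm{\rhod}^2R^{\gamma})$ for some $\gamma=\gamma(q)\ge0$. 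The terms linear in $z$ with a small prefactor — $\Reyrhod(\cofctrafo\Uvel_R\cdot\grad\uvel+\cofctrafo\uvel\cdot\grad\Uvel_R)$, $\Reyrhod\,\grad\uvel\,(\cofctrafo-\idmatrix)e_3$, $\Div((\idmatrix-\cofctrafo)\uvel)$, the $\reyd$‑linear boundary contributions, etc. — are bounded by $\Cc{c}\,\snorm{\rhod}^\ppot\norm{z}_{\Xrey}$ on $\calb$ via H\"older's inequality, the embeddings of Proposition \ref{PropEmbeddingsX1}, the bounds of Lemma \ref{LemmaEstimatesCtrafo} on $\cofctrafo-\idmatrix$, $\gradctrafo^{-1}-\idmatrix$, $\detctrafo-1$, and the $\LR{s}$‑integrability of $\Uvel,\grad\Uvel$, using $\norm{\height}_{\WSR{3-1/r}{r}}\le\snorm{\rhod}^\ppot$ and $\snorm{\Reyrhod}+\snorm{\reyd}\lesssim\snorm{\rhod}^\ppot$ on $\calb$. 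Finally the genuinely superlinear terms — $\rho\cofctrafo\uvel\cdot\grad\uvel$, the $z$‑quadratic pieces of $\noprcomp_3,\noprcomp_5$, the $\height$‑quadratic remainders in $\noprcomp_6,\noprcomp_7$ and in $\sigma\calg_\meancurv(\height)$ — are $\Cc{c}\norm{z}_{\Xrey}^2$ with differences $\Cc{c}\norm{z}_{\Xrey}\norm{z_1-z_2}_{\Xrey}$, using that $\WSR{2}{r}(\ball_1)$ and $\WSR{3-1/r}{r}(\sphere^2)$ are Banach algebras for $r>3$. Summing, $\norm{\nopr(z)}_\Yspace\le\Cc{c}\bp{\snorm{\rhod}\,R^{-3+3/q}+\snorm{\rhod}+\snorm{\rhod}^2R^{\gamma}+\snorm{\rhod}^{2\ppot}}$ on $\calb$, and the Lipschitz estimate has the same shape with an extra factor $\snorm{\rhod}^{\ppot}$ (resp.\ $\snorm{\rhod}^{1-\ppot}$) in front.

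To conclude, I would choose $R=R(\rhod)$ balancing $\snorm{\rhod}R^{-3+3/q}$ against $\snorm{\rhod}^2R^{\gamma}$ — i.e.\ $R\sim\snorm{\rhod}^{-1/(3-3/q+\gamma)}$ — so that both are $\le\snorm{\rhod}^{\ppot}$ for $\snorm{\rhod}$ small when $q\in(1,\tfrac{4}{3}]$; the condition $\ppot<1$ then absorbs the $O(\snorm{\rhod})$ contribution and $\ppot>\tfrac{3}{4}$ the leftover $R$‑dependent terms, so the self‑mapping and contraction inequalities hold for $\snorm{\rhod}$ small. The contraction mapping principle then yields the unique $z=(\uvel,\upres,\reyd,\height)\in\calb$ with $\mmap_\rhod(z)=z$, completing the proof. \textbf{The main obstacle} is the term‑by‑term estimation of $\nopr$ in the graded norm of $\Yspace$: one has to check that every product generated by $\nopr$ respects the precise integrability exponents built into $\Xrey$ and $\Yspace$ (this is where $q<\tfrac{3}{2}$, in fact $q\le\tfrac{4}{3}$, and $r>3$ are needed), that truncating the auxiliary field removes exactly the right‑hand side contributions with Oseen‑inadmissible decay (such as $\Reyrhod^2\partial_3\Uvel\notin\LR{q}$ near infinity for $q<\tfrac{3}{2}$) while leaving only compactly supported defects of order $R^{-3+3/q}$, and that the competing requirements on $R$ — large enough to kill the defect, not so large that the $\Reyrhod^2$‑weighted truncation terms spoil the bound — can be met simultaneously precisely for $\ppot\in(\tfrac{3}{4},1)$.
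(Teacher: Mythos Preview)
Your overall architecture --- invert $\loprrhod$ via Theorem \ref{ThmLinearOperatorHom}, run a contraction on the ball of radius $\snorm{\rhod}^{\ppot}$, split $\nopr$ into constant, linear-with-small-prefactor, and superlinear pieces --- is exactly the paper's approach. But there is a genuine gap in your treatment of the superlinear piece, and it is precisely the crux of the proof.

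You assert that the genuinely superlinear terms, in particular $\rho\cofctrafo\uvel\cdot\grad\uvel$, satisfy a clean quadratic bound $\Cc{c}\norm{z}_{\Xrey}^2$. This is false in the $\LR{q}$-component of $\Yspace_1$. The $\XreyOne$-norm only controls $\norm{\uvel}_{2q/(2-q)}$ and $\norm{\grad\uvel}_{4q/(4-q)}$ with the \emph{degenerating} weights $\snorm{\Reyrhod}^{1/2}$ and $\snorm{\Reyrhod}^{1/4}$; inverting these weights costs negative powers of $\snorm{\rhod}$. Concretely, H\"older gives $\norm{\uvel\cdot\grad\uvel}_q\le\norm{\uvel}_{2q/(2-q)}\norm{\grad\uvel}_2$, and by Proposition \ref{PropEmbeddingsX1} with $t=2$ one has $\norm{\grad\uvel}_2\le\snorm{\Reyrhod}^{-(1+3/2-3/q)}\norm{\uvel}_{\XreyOne}$. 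Together,
\[
\norm{\uvel\cdot\grad\uvel}_q\;\le\;\Cc{c}\,\snorm{\rhod}^{\,3/q-3}\,\norm{z}_{\Xrey}^2\;\le\;\Cc{c}\,\snorm{\rhod}^{\,3/q-3+2\ppot},
\]
and for this to be $o(\snorm{\rhod}^{\ppot})$ you need $\ppot>3-3/q$, which at the endpoint $q=4/3$ is exactly $\ppot>3/4$. This is where the lower bound on $\ppot$ actually enters --- not in the $R$-balancing, as you claim. With the paper's simple choice $R=\snorm{\rhod}^{-\ppot}$ the truncation defects and the $\Reyrhod^2\grad\Uvel_R$ term are all of order $\snorm{\rhod}^{(4-3/q)\ppot}$, which is $o(\snorm{\rhod}^{\ppot})$ for every $q>1$ without any further constraint on $\ppot$; your proposed balance $R\sim\snorm{\rhod}^{-1/(3-3/q+\gamma)}$ is therefore unnecessary, and your attribution of the constraint $\ppot>3/4$ to it is incorrect. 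Fix the convection estimate and the rest of your outline goes through.
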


\begin{proof}
We let $R\coloneqq R(\rhod)\coloneqq \snorm{\rhod}^{-\ppot}$ and show \eqref{ThmExistenceNonLinearFixedDomain_Eq} by establishing existence of a fixed point of the mapping
\begin{align*}
\mmap:\Xreyrhod(\Omega_0)\ra \Xreyrhod(\Omega_0),\quad
\mmap\np{\uvel,\upres,\reyd,\height} \coloneqq   \loprrhodinv\circ\noprs\np{\uvel,\upres,\reyd,\height}
\end{align*}
for sufficiently small $\rhod$.
To ensure that $\mmap$ is well defined, observe that
\begin{align*}
\Div(\cofctrafo\uvel)=\detctrafo\np{\Div(\uvel\circ(\ctrafo)^{-1})}\circ\ctrafo
\end{align*}
and
\begin{align*}
0=\int_{\sphere^2}\bp{\Uvel+e_3}\cdot\nvec\,\dS,\qquad
0= \int_{\sphere^2} e_3\cdot \detctrafo \snorm{\cofctrafo^\transpose\nvec}^{-1}\cofctrafo^\transpose\,\dS,
\end{align*}
which implies
\begin{align*}
\int_{\ball_1} \noprcomp_2(\uvel,\upres,\reyd,\height)\,dx = \int_{\sphere^2} \noprcomp_3(\uvel,\upres,\reyd,\height)\,\dS.
\end{align*}
Moreover, a change of coordinates yields $\noprcomp_4(\uvel,\upres,\reyd,\height)\cdot\nvec=0$, 
and we conclude that $\noprs(\uvel,\upres,\reyd,\height)\in\Yspace(\refdomain)$
after establishing the corresponding estimates below.
By fixing some $\reybar$ and choosing $\epsilon$ so small that $\snorm{\Reyrhod}\leq\reybar$, Theorem \ref{ThmLinearOperatorHom} ensures that $\loprrhod$ is invertible from $\Yspace(\refdomain)$
onto $\Xreyrhod(\Omega_0)$, and
$\mmap$ therefore well defined. In the next step, we show that $\mmap$ is a contractive self-mapping on the ball $\ball_{\rhodpot}(0)\subset\Xreyrhod(\Omega_0)$. To this end,  
consider $\np{\uvel,\upres,\reyd,\height}\in\ball_{\rhodpot}(0)$. The most critical part of the proof is to obtain a suitable estimate of $\noprs\np{\uvel,\upres,\reyd,\height}$. 
We first utilize Lemma \ref{LemmaAuxFieldTruncatedEstimates} and
recall from \eqref{EquationReyAux} that $\Reyrhod$ depends linearly on $\rhod$
to estimate
\begin{align}\label{ThmExistenceNonLinearFixedDomain_Est1}
\norm{(\Reyrhod+\reyd)\Div\fluidstresstrafo(\Uvel_R,\Upres_R)}_{q}\leq\Cc{c}(\snrhod+\snrhod^\ppot)\bp{\snrhod^{\np{3-3/q}\ppot}+\snrhod^\ppot} = \smallo\bp{\snrhod^\ppot}\ \text{as }\snorm{\rhod}\ra 0. 
\end{align}
An application of Lemma \ref{LemmaEstimatesCtrafo} yields
\begin{align}\label{ThmExistenceNonLinearFixedDomain_Est2}
\begin{aligned}
\norm{\Div\fluidstresstrafo(\uvel,\upres)-\Div\fluidstress(\uvel,\upres)}_q&\leq \Cc{c}{\norm{\height}_{\WSR{3-1/r}{r}}\np{\norm{\uvel}_{\XreyOne}+\norm{\upres}_{\Xspace_2}}}\\
&\leq \Cclast{c}\snrhod^{2\ppot}= \smallo\bp{\snrhod^\ppot}\ \text{as }\snorm{\rhod}\ra 0.
\end{aligned}
\end{align}
Lemma \ref{LemmaEstimatesCtrafo} also implies $\norm{\cofctrafo}_\infty\leq\Cc{c}(\const{heightminforinverse})$.
Employing first H\"older's inequality and then estimate \eqref{PropEmbeddingsX1_Embedding2} from Proposition \ref{PropEmbeddingsX1} with $t=2$, we obtain
\begin{align}\label{ThmExistenceNonLinearFixedDomain_Est3}
\begin{aligned}
\norm{\rho\cofctrafo\uvel\cdot\grad\uvel}_q &\leq \Cc{c} \norm{\cofctrafo}_\infty  \norm{\uvel}_{\frac{2q}{2-q}}\norm{\grad\uvel}_2\\
&\leq \Cc{c} \snrhod^{-\half-(1+\frac{3}{2}-\frac{3}{q})}\norm{\uvel}^2_{\XreyOne}
\leq \Cc{c} \snrhod^{\frac{3}{q}-3+2\ppot} = \smallo\bp{\snrhod^\ppot}\ \text{as }\snorm{\rhod}\ra 0
\end{aligned}
\end{align}
since $\frac{3}{4}<\alpha$.
Further applications of H\"older's inequality in combination with the integrability properties \eqref{UvelUpresIntegrability} of $\Uvel$
yield
\begin{align}\label{ThmExistenceNonLinearFixedDomain_Est4}
\begin{aligned}
\norm{\rho\np{\Rey+\reyd}&\bp{
\cofctrafo\Uvel_R\cdot\grad\uvel+\cofctrafo\uvel\cdot\grad\Uvel_R
}}_{q}\\
&\leq \bp{\snorm{\Rey}+\snorm{\reyd}} \norm{\cofctrafo}_\infty \bp{
\norm{\Uvel_R}_4 \norm{\grad\uvel}_{\frac{4q}{4-q}}
+\norm{\uvel}_{\frac{2q}{2-q}} \norm{\grad\Uvel_R}_2
}\\
&\leq \Cc{c}\bp{\snorm{\rhod}+\snrhod^\ppot}
\bp{
\snorm{\rhod}^{-\frac{1}{4}}\norm{\uvel}_{\XreyOne}+\snorm{\rhod}^{-\frac{1}{2}}\norm{\uvel}_{\XreyOne}}\\
&\leq \Cc{c}\bp{\snorm{\rhod}+\snrhod^\ppot}
\bp{
\snorm{\rhod}^{-\frac{1}{4}}+\snorm{\rhod}^{-\frac{1}{2}}
}\snorm{\rhod}^\ppot
= \smallo\bp{\snrhod^\ppot}\ \text{as }\snorm{\rhod}\ra 0
\end{aligned}
\end{align}
since $\frac{1}{2}<\alpha$.
From the integrability properties \eqref{UvelUpresIntegrability} we also obtain $\Uvel_R\cdot\grad\Uvel_R\in\LR{s}(\R^3)$ for all $s>1$ and thus
\begin{align}\label{ThmExistenceNonLinearFixedDomain_Est5}
\begin{aligned}
\norm{\rho(\Reyrhod+\reyd)^2\cofctrafo\Uvel_R\cdot\grad\Uvel_R}_q\leq \Cc{c} \norm{\cofctrafo}_\infty \bp{\snrhod+\snrhod^\ppot}^2
= \smallo\bp{\snrhod^\ppot}\ \text{as }\snorm{\rhod}\ra 0.
\end{aligned}
\end{align}
We move on to the so-called drift terms. Recalling that $\cofctrafo=\idmatrix$ on $\ball_4^c$, we estimate
\begin{align}\label{ThmExistenceNonLinearFixedDomain_Est6}
\norm{\rho\reyd\grad\uvel\cofctrafo e_3}_q
\leq \Cc{c} \snrhod^\ppot \np{\norm{\grad\uvel}_{\LR{q}(\ball_4)} + \norm{\partial_3\uvel}_q}\leq
\Cclast{c} \snrhod^\ppot \snrhod^\ppot  
= \smallo\bp{\snrhod^\ppot}\ \text{as }\snorm{\rhod}\ra 0
\end{align}
and similarly
\begin{align}\label{ThmExistenceNonLinearFixedDomain_Est7}
\norm{\rho\Reyrhod\grad\uvel(\cofctrafo-\idmatrix)e_3}_q\leq 
\Cc{c} \snrhod^{2\ppot}   
= \smallo\bp{\snrhod^\ppot}\ \text{as }\snorm{\rhod}\ra 0.
\end{align}
Finally, we once more employ Lemma \ref{LemmaAuxFieldTruncatedEstimates} to deduce
\begin{align}\label{ThmExistenceNonLinearFixedDomain_Est8}
\begin{aligned}
\norm{ \rho(\Reyrhod+\reyd)^2\grad\Uvel_R\cofctrafo e_3}_q
&\leq \Cc{c}\bp{\snrhod + \snrhod^\ppot}^2 R^{-2+3/q}\\
&= \Cclast{c}\bp{\snrhod + \snrhod^\ppot}^2 \snrhod^{(2-3/q)\ppot}
= \smallo\bp{\snrhod^\ppot}\ \text{as }\snorm{\rhod}\ra 0
\end{aligned}
\end{align}
Summarizing \eqref{ThmExistenceNonLinearFixedDomain_Est1}--\eqref{ThmExistenceNonLinearFixedDomain_Est8}, we
conclude
$\norm{\noprcomp_1\np{\uvel,\upres,\reyd,\height}}_q= \smallo\bp{\snrhod^\ppot}\ \text{as }\snorm{\rhod}\ra 0$,
which is the most critical estimate of the proof. With less effort, the same estimate can be established for $\norm{\noprcomp_1\np{\uvel,\upres,\reyd,\height}}_r$. 
Hence, $\norm{\noprcomp_1\np{\uvel,\upres,\reyd,\height}}_{\Yspace_1}= \smallo\bp{\snrhod^\ppot}$ as $\snorm{\rhod}\ra 0$.
The other components $\noprcomp_2,\ldots,\noprcomp_7$ of $\noprs\np{\uvel,\upres,\reyd,\height}$ are estimated similarly. In particular, employing
that $\WSR{1}{r}(\sphere^2)$ is an algebra due to $r>3$, the nonlinear term $\norm{\calg_\meancurv(\height)}_{\WSR{1-1/r}{r}(\sphere^2)}$ can be estimated
such that we obtain
\begin{align*}
\norm{\noprcomp_7(\uvel,\upres,\reyd,\height)}_{\Yspace_7}\leq \Cc{c}\bp{\snrhod+ \snrhod^2 + \snrhod^3 + \snrhod^4}.
\end{align*}
Since $\alpha<1$, we deduce 
$\norm{\noprcomp_7\np{\uvel,\upres,\reyd,\height}}_{\Yspace_7}= \smallo\bp{\snrhod^\ppot}$
and thus $\norm{\noprcomp\np{\uvel,\upres,\reyd,\height}}_{\Yspace}= \smallo\bp{\snrhod^\ppot}$ as $\snorm{\rhod}\ra 0$.
Recalling from Theorem \ref{ThmLinearOperatorHom} that $\norm{\loprrhodinv}$ is independent of $\Reyrhod$, we conclude that also
$\norm{\mmap}_{\Xreyrhod}= \smallo\bp{\snrhod^\ppot}$ as $\snorm{\rhod}\ra 0$. Consequently, $\mmap$ is a
self-mapping on the ball $\ball_{\rhodpot}(0)\subset\Xreyrhod(\Omega_0)$ for sufficiently small $\rhod$. Estimates completely similar to the ones
above can be used to verify that $\mmap$ is also a contraction on $\ball_{\rhodpot}(0)\subset\Xreyrhod(\Omega_0)$ for sufficiently small $\rhod$. Therefore, the contraction
mapping principle (or Banach's Fixed Point Theorem) yields a unique fixed point $\np{\uvel,\upres,\reyd,\height}$ in $\ball_{\rhodpot}(0)$ of $\mmap$, which is clearly a solution to
\eqref{ThmExistenceNonLinearFixedDomain_Eq} satisfying \eqref{ThmExistenceNonLinearFixedDomain_Est}.
\end{proof}

Finally, we are able to prove the main theorem of the article.

\begin{proof}[Proof of Theorem \ref{MainThm}]
Choosing the parameters as in Theorem \ref{ThmExistenceNonLinearFixedDomain}, we let $(\uvel,\upres,\reyd,\height)\in\Xreyrhod(\Omega_0)$
denote the corresponding solution to \eqref{ThmExistenceNonLinearFixedDomain_Eq}.

A boot-strapping argument based on
coercive $\LR{r}$ estimates in the whole and half space for the principle part of the operators $\loprcomp{1-4}$ and $\loprcomp{7}$, furnished by Theorem \ref{ADN_TwoFoldHalfSpace} in the former case 
and well-know estimates for the classical Laplace operator in the latter case, yields higher-order regularity. More specifically, after smoothing out the boundary in the $\loprcomp{1-4}$ part of equation \eqref{ThmExistenceNonLinearFixedDomain_Eq}, difference quotients of $\np{\uvel,\upres}$ can be
estimated using Theorem \ref{ADN_TwoFoldHalfSpace}, which implies additional regularity of $\np{\uvel,\upres}$.
In turn, classical $\LR{r}$ estimates for the Laplace operator in the 2D whole space yields bounds on difference quotients for $\height$ after smoothing out the interface in the  
$\loprcomp{7}$ part of equation \eqref{ThmExistenceNonLinearFixedDomain_Eq}. 
In both cases, we choose $\epsilon$ and thus $\rhod$ sufficiently small in order to absorb higher-order terms from the right-hand side.
Bootstrapping this procedure, we conclude regularity of arbitrary order for both $\np{\uvel,\upres}$ and $\height$, and thereby deduce that the solution is smooth up to the boundary.

We further claim that the solution is invariant with respect to rotations that leave the $e_3$-axis invariant.
To this end, consider an arbitrary  $\rotmatrix\in\sorthomatrixspace{3}$
with $Re_3=e_3$.
Define
\begin{align*}
\tuvel(x)\coloneqq R^\transpose\uvel(Rx), \quad
\tupres(x)\coloneqq \upres(Rx), \quad
\treyd \coloneqq \reyd, \quad
\theight(x)\coloneqq\height(Rx).
\end{align*}
Utilizing that \eqref{AuxFieldSymmetry} leads to rotation invariance of $(\Uvel_R,\Upres_R)$, 
and that \eqref{ConstrucntionOfCoordinateTrafo_RotSym} implies 
$\tctrafo(x)=R^\transpose\ctrafo(Rx)$,
one readily verifies that $\np{\tuvel,\tupres,\treyd,\theight}\in\Xreyrhod(\Omega_0)$
is another solution to \eqref{ThmExistenceNonLinearFixedDomain_Eq} satisfying \eqref{ThmExistenceNonLinearFixedDomain_Est}.
The uniqueness assertion of Theorem \ref{ThmExistenceNonLinearFixedDomain} therefore yields 
$\np{\uvel,\upres,\reyd,\height}=\np{\tuvel,\tupres,\treyd,\theight}$, and we conclude the claimed rotational symmetry of the solution.

Now recall from \eqref{EqLinearizationAtAuxField} that a solution to \eqref{SystemAbstractFormulation} yields a solution $\np{\wvel,\wpres,\rey,\height}$ to
\eqref{SystemFluidReferenceConfiguration}. Due to the rotation symmetry of $\np{\wvel,\wpres,\rey,\height}$,
we thereby obtain a solution in $\Xreyrhod(\Omega_0)$
to \eqref{SystemFluidReferenceConfigurationPreSimplification}. Finally recalling \eqref{EqFieldsTransformedOnFixedDomain},
we deduce existence of a solution $\np{\tdvel,\sspres,\rey,\height}$ to \eqref{derivation_sseq_final} satisfying \eqref{MainThm_IntegrabilityProperties_SmoothUpToBoundary} and
\eqref{MainThm_Symmetry}.

Since $\tdvel\in\Xoseen\bp{\ssreservoirdomainh}$, we may ``test'' the system with $\tdvel$, \ie, multiplication of \eqrefsub{derivation_sseq_final}{1} by $\tdvel$ and
subsequent integration by parts is a valid computation. Under the assumption $\rey=0$ this computation yields $\rhod=0$. Since we are assuming $\rhod\neq 0$, we conclude
that also $\rey\neq 0$.

Finally, since $\np{\tdvel,\sspres}$ solves the classical Navier--Stokes equations in a 3D exterior domain with $\tdvel\in\Xoseen\bp{\ssreservoirdomainh}$ and $\rey\neq 0$, the
integrability properties \eqref{MainThm_IntegrabilityProperties} and asymptotic structure  \eqref{MainThm_AsymptoticProfile}
follow from \cite[Theorem X.6.4]{GaldiBookNew} and \cite[Theorem X.8.1]{GaldiBookNew}, respectively.
\end{proof}

\bibliographystyle{abbrv}

\end{document}